\newcommand{\pref}[1]{(\ref{#1})}
\newcommand{\be}{\begin{equation}}
\newcommand{\ee}{\end{equation}}
\newcommand{\qed}{{\unskip\nobreak\hfil\penalty50\quad\null\nobreak\hfil
	$\square$\parfillskip0pt\finalhyphendemerits0\par\medskip}}
\newcommand{\zume}{\!\!\!}
\newcommand{\veca}{\mbox{\boldmath $ a $}}
\newcommand{\vecb}{\mbox{\boldmath $ b $}}
\newcommand{\vecc}{\mbox{\boldmath $ c $}}
\newcommand{\vecf}{\mbox{\boldmath $ f $}}
\newcommand{\vecu}{\mbox{\boldmath $ u $}}
\newcommand{\vecv}{\mbox{\boldmath $ v $}}
\newcommand{\vecw}{\mbox{\boldmath $ w $}}
\newcommand{\vecphi}{\mbox{\boldmath $ \phi $}}
\newcommand{\vecpsi}{\mbox{\boldmath $ \psi $}}
\newcommand{\vectau}{\mbox{\boldmath $ \tau $}}
\newcommand{\smallvecf}{\mbox{\scriptsize \boldmath $ f $}}
\renewcommand{\epsilon}{\varepsilon}
\newtheorem{thm}{Theorem}[section]
\newtheorem{prop}{Proposition}[section]
\newtheorem{lem}{Lemma}[section]
\newtheorem{cor}{Corollary}[section]
\newtheorem{eg}{Example}[section]
\newtheorem{rem}{Remark}[section]
\newtheorem{proof}{\normalfont\itshape Proof.}
\title{Decomposition of generalized O'Hara's energies}
\author{
Aya Ishizeki, Chiba University
\and
Takeyuki Nagasawa, Saitama University
}
\date{\today}
\begin{document}
\maketitle
\begin{abstract}
O'Hara introduced several functionals as knot energies.
One of them is the M\"{o}bius energy.
We know its M\"{o}bius invariance from Doyle-Schramm's cosine formula.
It is also known that the M\"{o}bius energy was decomposed into three components keeping the M\"{o}bius invariance.
The first component of decomposition represents the extent of bending of the curves or knots, while the second one indicates the extent of twisting.
The third one is an absolute constant.
In this paper,
we show a similar decomposition for generalized O'Hara energies.
On the way to derive it,
we obtain an analogue of the cosine formula for the generalized O'Hara energy.  
Furthermore, using decomposition,
the first and second variational formulae are derived.
\\
keywords:
O'Hara energy \and M\"{o}bius energy \and knot energy \and decomposition of energy \and variational formula\\
% \PACS{PACS code1 \and PACS code2 \and more}
subclass:
53A04 \and 58J70 \and 49Q10
\end{abstract}
\section{Introduction}
\par
Let $ \vecf $ be a knot with length $ \mathcal{L} $ which is parameteried by arc-length. 
In \cite{OH},
the functional
\[
	\mathcal{E} ( \vecf )
	=
	\iint_{ ( \mathbb{R} / \mathcal{L} \mathbb{Z} )^2 }
	\left(
	\frac 1 { \| \vecf ( s_1 ) - \vecf ( s_2 ) \|_{ \mathbb{R}^3 }^\alpha }
	-
	\frac 1 { \mathscr{D} ( \vecf ( s_1 ) , \vecf ( s_2 ) )^\alpha }
	\right)^p
	d s_1 d s_2
\]
was introduced by O'Hara as an energy of knots, where 
$ \mathscr{D} ( \vecf ( s_1 ) , \vecf ( s_2 ) ) $ is distance between two points $ \vecf ( s_1 ) $ and $ \vecf ( s_2 ) $ on the curve $ \vecf $.
$ \alpha $ and $ p $ are positive constants.  
We call it O'Hara's $ ( \alpha , p ) $ energy. 
Though a knot is defined as a closed curve in $ \mathbb{R}^3 $ without self-intersections, 
the above energy can be defined for curves in $ \mathbb{R}^n $. 
Therefore here we consider $ \vecf $ as a closed curve in $ \mathbb{R}^n $ without self-intersections.
We denote the total length by $ \mathcal{L} $. 
$ \| \cdot \|_{ \mathbb{R}^n } $ denotes Euclidean norm in $ \mathbb{R}^n $. 
\par
Let $ \Phi $ be a function from $ \mathbb{R}_+ = \{ x \in \mathbb{R} \, | \, x > 0 \} $ to itself. We consider the energy
\[
	\mathcal{E}_\Phi ( \vecf )
	=
	\iint_{ ( \mathbb{R} / \mathcal{L} \mathbb{Z} )^2 }
	\left(
	\frac 1 { \Phi ( \| \vecf ( s_1 ) - \vecf ( s_2 ) \|_{ \mathbb{R}^n } ) }
	-
	\frac 1 { \Phi ( \mathscr{D} ( \vecf ( s_1 ) , \vecf ( s_2 ) ) ) }
	\right)
	d s_1 d s_2
\]
for closed curves in $ \mathbb{R}^n $. 
Taking the fact that the above energy is O'Hara's $ ( \alpha , 1 ) $ energy if $ \Phi (x) = x^\alpha $ into consideration, it is natural to call it a generalized O'Hara energy. 
Since non-negativity of this energy density is necessary, we suppose that 
\begin{itemize}
\item[{\rm (A.1)}] $ \Phi $ is monotonically increasing. 
\end{itemize}
\par
In what follows,
we use the notation $ \Delta u $ to mean $ u( s_1 ) - u ( s_2 ) $ for a function on $ \mathbb{R} / \mathcal{L} \mathbb{Z} $.
\par
In case of $ \Phi (x) = x^2 $, it holds that 
\[
	\mathcal{E}_{ x^2 } ( \vecf )
	=
	\iint_{ ( \mathbb{R} / \mathcal{L} \mathbb{R} )^2 }
	\frac { 1 - \cos \varphi ( s_1 , s_2 ) }
	{ \| \Delta \vecf \|_{ \mathbb{R}^n }^2 }
	\, d s_1 d s_2
	+ 4
\]
by using Doyle-Schramm's formula, where $ \varphi $ is a conformal angle (see \cite{KS}).
This is called the cosine formula. 
Since the integrand is M\"{o}bius invariant and so is this energy itself. 
\begin{rem}
The invariance of $ \mathcal{E}_{ x^2 } $ energy is firstly proved by Freedman-He-Wang \cite{FHW} in other way. 
\end{rem}
\par
We showed that $ \mathcal{E}_{ x^2 } $ may be decomposed like
\begin{align*}
	\mathcal{E}_{ x^2 } ( \vecf )
	= & \
	\mathcal{E}_{ x^2 , 1 } ( \vecf )
	+
	\mathcal{E}_{ x^2 , 2 } ( \vecf )
	+
	4
	,
	\\
	\mathcal{E}_{ x^2 , 1 } ( \vecf )
	= & \
	\iint_{ ( \mathbb{R} / \mathcal{L} \mathbb{R} )^2 }
	\frac { \| \Delta \vectau \|_{ \mathbb{R}^n }^2 } { 2 \| \Delta \vecf \|_{ \mathbb{R}^n }^2 } \, d s_1 d s_2
	,
	\\
	\mathcal{E}_{ x^2 , 2 } ( \vecf )
	= & \
	\iint_{ ( \mathbb{R} / \mathcal{L} \mathbb{R} )^2 }
	\frac 2 { \| \Delta \vecf \|_{ \mathbb{R}^n }^2 }
	\left\langle
	\vectau ( s_1 ) \wedge \frac { \Delta \vecf } { \| \Delta \vecf \|_{ \mathbb{R}^n } }
	,
	\vectau ( s_2 ) \wedge \frac { \Delta \vecf } { \| \Delta \vecf \|_{ \mathbb{R}^n } }
	\right\rangle_{ {\bigwedge}^2 \mathbb{R}^n }
	\, d s_1 d s_2
\end{align*}
and $ \mathcal{E}_{ x^2 , 1 } $,
$ \mathcal{E}_{ x^2 , 2 } $ are M\"{o}bius invariant (\cite{IshizekiNagasawaI,IshizekiNagasawaIII}). 
Here $ \vectau = \vecf ^\prime $ is the unit tangent vector.
$ \wedge $ is the exterior product between two vectors on $ \mathbb{R}^n $.
$ \langle \cdot , \cdot \rangle_{ {\bigwedge}^2 \mathbb{R}^n } $ is the inner product on the space $ {\bigwedge}^2 \mathbb{R}^n $ of 2-vectors.
Note that the existence of $ \vectau $ almost everywhere follows from the finiteness of energy by Blatt \cite{Blatt}.
The first one $ \mathcal{E}_{ x^2 , 1 } $ represents how bent curves or knots are and the second one $ \mathcal{E}_{ x^2 , 2 } $ does how twisted they are. 
Therefore it is natural to consider whether the generalized O'Hara's energy may be decomposed or not. 
It seems that the decomposition of the M\"{o}bius energy can be gained by decomposing the cosine formula, however our proof (\cite{IshizekiNagasawaI}) does not depend on that formula. 
This suggests that the generalized O'Hara's energy may be decomposed in a similar way as the M\"{o}bius energy. 
We do not consider M\"{o}bius invariance in this case. 
\par
We suppose further conditions for $ \Phi $ as follows. 
\begin{itemize}
\item[{\rm (A.2)}]
	$ \displaystyle{ \int_x^\infty \frac { dt } { \Phi (t) } < \infty } $ for $ x > 0 $, 
\item[{\rm (A.3)}]
	Functional space $ W_\Phi $ is defined by
	\[
		W_\Phi
		=
		\left\{ \vecu \in W^{1,2} ( \mathbb{R} / \mathcal{L} \mathbb{Z} )
		\, \left| \,
		\iint_{ ( \mathbb{R} / \mathcal{L} \mathbb{Z} )^2 }
		\frac { \| \Delta \vecu^\prime \|_{ \mathbb{R}^n }^2 }
		{ \Phi ( \mathrm{dist}_{ \mathbb{R} / \mathcal{L} \mathbb{Z} } ( s_1 , s_2 ) ) }
		\, d s_1 d s_2
		< 
		\infty
		\right. \right\}. 
	\]
	If $ \mathcal{E}_\Phi ( \vecf ) < \infty $, then 
	\[
		\vecf \in W_\Phi \cap W^{ 1, \infty } ( \mathbb{R} / \mathcal{L} \mathbb{Z} ) ,
		\quad
		\vecf \mbox{ is bi-Lipschitz}, 
	\]
	\item[{\rm (A.4)}]
	Set
	\[
		\Lambda (x) = - \frac 1x \int_x^\infty \frac { dt } { \Phi (t) }. 
	\]
	If $ \vecf  \in W_\Phi \cap W^{ 1, \infty } ( \mathbb{R} / \mathcal{L} \mathbb{Z} )  $ is bi-Lipschitz, and 
	$ \| \vecf^\prime \|_{ \mathbb{R}^n } \equiv 1 $ (a.e.), then it holds that
	\begin{align*}
		& ( \ast ) \quad
		\lim_{ \epsilon \to + 0 }
		\epsilon
		\int_{ \mathbb{R} / \mathcal{L} \mathbb{Z} }
		\left( \Lambda ( \| \vecf ( s_1 ) - \vecf ( s_1 + \epsilon ) \|_{ \mathbb{R}^n } )
		- \Lambda ( \epsilon ) \right)
		d s_1
		= 0 ,
		\\
		& ( \dagger ) \quad
		\lim_{ \epsilon \to + 0 }
		\int_{ \mathbb{R} / \mathcal{L} \mathbb{Z} }
		\Lambda ( \| \vecf ( s_1 ) - \vecf ( s_1 + \epsilon ) \|_{ \mathbb{R}^n } )
		\int_{ s_1 }^{ s_1 + \epsilon }
		\| \vecf^\prime ( s_1 ) - \vecf^\prime ( s_2 ) \|_{ \mathbb{R}^n }^2
		 d s_2 d s_1
		 = 0. 
	\end{align*}
\item[{\rm (A.5)}]
	\begin{itemize}
	\item[{\rm (a)}]
	For any $ \lambda \in ( 0,1 ) $ and any $ x \in \left( 0 , \frac { \mathcal{L} } 2 \right] $, there exists a constant $ C ( \lambda , \mathcal{L} ) > 0 $ such that $ \Phi ( \lambda x ) \geqq C( \lambda , \mathcal{L} ) \Phi ( x ) $, 
	\item[{\rm (b)}]
	$ \displaystyle{ \inf_{ x \in \left( 0 , \frac { \mathcal{L} } 2 \right] }
\left( \frac 1 { \Phi (x) } + \Lambda (x) \right) \geqq 0 } $.
	\end{itemize}
\end{itemize}
\par
In \S~\ref{Sufficient conditions} we refer to sufficient conditions of $ \Phi $ for these conditions to hold. 
Using the sufficient conditions, we show that (A.1)--(A.5) hold if $ \alpha \in [ 2 , 3 ) $ in case of $ \Phi (x) = x^\alpha $. 
The article \cite{OH3} says that an inequality $ \alpha \geqq 2 $ is a condition of self-repulsiveness of  $ \mathcal{E}_{ x^\alpha } $. 
On the other hand, $ \alpha < 3 $ is a condition of $ \mathcal{E}_{ x^\alpha } ( \vecf ) < \infty $ for any smooth closed curves without self-intersections. 
Taking them into consideration, $ \mathcal{E}_{ x^\alpha } $ is well-defined as energy of knots if $ \alpha \in [ 2 , 3 ) $. 
\par
Let us describe our main theorem. 
\begin{thm}
We suppose {\rm (A.1)--(A.5)}. Set
\begin{align*}
	\mathcal{E}_{ \Phi , 1 } ( \vecf )
	= & \
	\iint_{ ( \mathbb{R} / \mathcal{L} \mathbb{Z} )^2 }
	\frac { \| \Delta \vectau \|_{ \mathbb{R}^n }^2 }
	{ 2 \Phi ( \| \Delta \vecf \|_{ \mathbb{R}^n } ) }
	\, d s_1 d s_2
	,
	\\
	\mathcal{E}_{ \Phi , 2 } ( \vecf )
	= & \
	\iint_{ ( \mathbb{R} / \mathcal{L} \mathbb{Z} )^2 }
	\left(
	\frac 1 { \Phi ( \| \Delta \vecf \|_{ \mathbb{R}^n } ) }
	-
	\Lambda ( \| \Delta \vecf \|_{ \mathbb{R}^n } )
	\right)
	\\
	& \quad
	\times
	\left\langle
	\vectau ( s_1 ) \wedge \frac { \Delta \vecf } { \| \Delta \vecf \|_{ \mathbb{R}^n } }
	,
	\vectau ( s_2 ) \wedge \frac { \Delta \vecf } { \| \Delta \vecf \|_{ \mathbb{R}^n } }
	\right\rangle_{ {\bigwedge}^2 \mathbb{R}^n }
	d s_1 d s_2
	.
\end{align*}
If $ \mathcal{E}_\Phi ( \vecf ) < \infty $, then the integrals of 
$ \mathcal{E}_{ \Phi , 1 } ( \vecf ) $ and $ \mathcal{E}_{ \Phi , 2 } ( \vecf ) $ are absolutely convergent. Furthermore, it follows that
\[
	\mathcal{E}_\Phi ( \vecf )
	=
	\mathcal{E}_{ \Phi , 1 } ( \vecf )
	+
	\mathcal{E}_{ \Phi , 2 } ( \vecf )
	+
	2 \mathcal{L} \int_{ \frac { \mathcal{L} } 2 }^\infty \frac { dx } { \Phi (x) }. 
\]
\label{Main Theorem}
\end{thm}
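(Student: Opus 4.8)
\emph{Proof plan.} The whole identity will be reduced to a single exact differential in $ s_2 $ plus an elementary one–dimensional integral, after which the only real work is to control the boundary contributions produced near the diagonal, which is exactly what conditions {\rm (A.4)} are for. Write $ r = \| \Delta \vecf \|_{ \mathbb{R}^n } $, $ \vece = \Delta \vecf / r $ and $ d = \mathscr{D} ( \vecf ( s_1 ) , \vecf ( s_2 ) ) = \mathrm{dist}_{ \mathbb{R} / \mathcal{L} \mathbb{Z} } ( s_1 , s_2 ) $. From the definition of $ \Lambda $ one has $ ( x \Lambda (x) )^\prime = \Phi (x)^{-1} $, hence the key relation $ \Phi (x)^{-1} - \Lambda (x) = x \Lambda^\prime (x) $. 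By bilinearity of the $ \bigwedge^2 $-inner product,
\[
	W := \left\langle \vectau ( s_1 ) \wedge \vece , \vectau ( s_2 ) \wedge \vece \right\rangle_{ \bigwedge^2 \mathbb{R}^n }
	= \langle \vectau ( s_1 ) , \vectau ( s_2 ) \rangle_{ \mathbb{R}^n } - \langle \vectau ( s_1 ) , \vece \rangle_{ \mathbb{R}^n } \langle \vectau ( s_2 ) , \vece \rangle_{ \mathbb{R}^n } ,
\]
while $ \tfrac12 \| \Delta \vectau \|_{ \mathbb{R}^n }^2 = 1 - \langle \vectau ( s_1 ) , \vectau ( s_2 ) \rangle_{ \mathbb{R}^n } $. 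Adding the two integrands and cancelling $ \langle \vectau ( s_1 ) , \vectau ( s_2 ) \rangle_{ \mathbb{R}^n } $, the combined integrand of $ \mathcal{E}_{ \Phi , 1 } + \mathcal{E}_{ \Phi , 2 } $ becomes $ \Phi (r)^{-1} - \Phi (r)^{-1} \langle \vectau ( s_1 ) , \vece \rangle_{ \mathbb{R}^n } \langle \vectau ( s_2 ) , \vece \rangle_{ \mathbb{R}^n } - \Lambda (r) W $.

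Now I would use $ \langle \vectau ( s_1 ) , \vece \rangle_{ \mathbb{R}^n } = \partial_{ s_1 } r $, $ \langle \vectau ( s_2 ) , \vece \rangle_{ \mathbb{R}^n } = - \partial_{ s_2 } r $, the relation $ \Phi (r)^{-1} - \Lambda (r) = r \Lambda^\prime (r) $, and $ \partial_{ s_2 } \langle \vectau ( s_1 ) , \Delta \vecf \rangle_{ \mathbb{R}^n } = - \langle \vectau ( s_1 ) , \vectau ( s_2 ) \rangle_{ \mathbb{R}^n } $ to recognise the last two terms as an exact $ s_2 $-derivative:
\[
	- \frac{ \langle \vectau ( s_1 ) , \vece \rangle_{ \mathbb{R}^n } \, \langle \vectau ( s_2 ) , \vece \rangle_{ \mathbb{R}^n } }{ \Phi (r) } - \Lambda (r) W = \partial_{ s_2 } \! \left[ \Lambda (r) \, \langle \vectau ( s_1 ) , \Delta \vecf \rangle_{ \mathbb{R}^n } \right] .
\]
Thus the integrand of $ \mathcal{E}_{ \Phi , 1 } + \mathcal{E}_{ \Phi , 2 } - \mathcal{E}_\Phi $ equals $ \Phi (d)^{-1} + \partial_{ s_2 } [ \Lambda (r) \langle \vectau ( s_1 ) , \Delta \vecf \rangle_{ \mathbb{R}^n } ] $; this is the announced cosine-formula analogue. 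Before integrating I would verify absolute convergence so that each energy is the limit of its restriction to $ \Omega_\epsilon = \{ d \geqq \epsilon \} $: for $ \mathcal{E}_{ \Phi , 1 } $ the bi-Lipschitz bound $ r \geqq c\,d $, monotonicity of $ \Phi $ and {\rm (A.5)(a)} give $ \Phi (r)^{-1} \leqq C \Phi (d)^{-1} $, so $ \mathcal{E}_{ \Phi , 1 } $ is dominated by the finite $ W_\Phi $-seminorm via {\rm (A.3)}; for $ \mathcal{E}_{ \Phi , 2 } $ the coefficient $ \Phi (r)^{-1} - \Lambda (r) = r \Lambda^\prime (r) $ is comparable to $ \Phi (r)^{-1} $ by {\rm (A.2)}, {\rm (A.5)}, and $ |W| $ is estimated through the projection of $ \Delta \vectau $ onto $ \vece^\perp $, reducing again to the same majorant. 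These estimates are routine once the hypotheses are in force.

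Integrating the identity over $ \Omega_\epsilon $ and letting $ \epsilon \to + 0 $ is the substance of the proof. The elementary term gives $ \iint_{ \Omega_\epsilon } \Phi (d)^{-1} \, d s_1 d s_2 = 2 \mathcal{L} \int_\epsilon^{ \mathcal{L} / 2 } \Phi (t)^{-1} \, dt $, while the exact differential collapses to the boundary values at $ s_2 = s_1 \pm \epsilon $:
\[
	B ( \epsilon ) = \int_{ \mathbb{R} / \mathcal{L} \mathbb{Z} } \! \Big( \Lambda ( r_- ) \langle \vectau ( s_1 ) , \vecf ( s_1 ) - \vecf ( s_1 - \epsilon ) \rangle_{ \mathbb{R}^n } - \Lambda ( r_+ ) \langle \vectau ( s_1 ) , \vecf ( s_1 ) - \vecf ( s_1 + \epsilon ) \rangle_{ \mathbb{R}^n } \Big) d s_1 ,
\]
where $ r_\pm = \| \vecf ( s_1 ) - \vecf ( s_1 \pm \epsilon ) \|_{ \mathbb{R}^n } $. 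Using $ \langle \vectau ( s_1 ) , \vecf ( s_1 ) - \vecf ( s_1 \pm \epsilon ) \rangle_{ \mathbb{R}^n } = \mp \epsilon \pm \tfrac12 \int_0^\epsilon \| \vectau ( s_1 ) - \vectau ( s_1 \pm u ) \|_{ \mathbb{R}^n }^2 \, du $ splits $ B ( \epsilon ) $ into the leading piece $ \int \epsilon ( \Lambda ( r_- ) + \Lambda ( r_+ ) ) \, d s_1 $ and remainder integrals weighted by $ \| \Delta \vectau \|_{ \mathbb{R}^n }^2 $. In the leading piece I replace $ \Lambda ( r_\pm ) $ by $ \Lambda ( \epsilon ) $, which is legitimate precisely by $ ( \ast ) $, obtaining $ 2 \mathcal{L} \epsilon \Lambda ( \epsilon ) = - 2 \mathcal{L} \int_\epsilon^\infty \Phi (t)^{-1} \, dt $; the remainder integrals vanish by $ ( \dagger ) $ and by its mirror image (deduced from $ ( \ast ) , ( \dagger ) $ through the reparametrisation $ s_1 \mapsto s_1 - \epsilon $).

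The main obstacle is exactly this limit. The two contributions $ B ( \epsilon ) $ and $ 2 \mathcal{L} \int_\epsilon^{ \mathcal{L} / 2 } \Phi (t)^{-1} dt $ must be kept together: in the cases of interest (e.g. $ \Phi (x) = x^\alpha $, $ \alpha \in [ 2 , 3 ) $) each diverges as $ \epsilon \to + 0 $, and conditions $ ( \ast ) , ( \dagger ) $ are designed so that, apart from the universal singular part $ - 2 \mathcal{L} \int_\epsilon^\infty \Phi (t)^{-1} dt $ of $ B ( \epsilon ) $, everything vanishes, whence
\[
	B ( \epsilon ) + 2 \mathcal{L} \int_\epsilon^{ \mathcal{L} / 2 } \frac { dt } { \Phi (t) } \longrightarrow - 2 \mathcal{L} \int_{ \mathcal{L} / 2 }^\infty \frac { dt } { \Phi (t) } \qquad ( \epsilon \to + 0 ) .
\]
Passing to the limit on the left-hand side by the absolute convergence established above yields $ \mathcal{E}_{ \Phi , 1 } + \mathcal{E}_{ \Phi , 2 } - \mathcal{E}_\Phi = - 2 \mathcal{L} \int_{ \mathcal{L} / 2 }^\infty \Phi (t)^{-1} dt $, which is the claimed decomposition.
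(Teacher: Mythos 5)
Your main line is essentially the paper's: the identity
$\mathscr{M}_{\Phi,1}+\mathscr{M}_{\Phi,2}-\mathscr{M}_\Phi=\Phi(d)^{-1}+\partial_{s_2}\bigl[\Lambda(r)\,\vectau(s_1)\cdot\Delta\vecf\bigr]$
is exactly the paper's Lemma~2.1 in disguise, since $\Lambda(r)\,\vectau(s_1)\cdot\Delta\vecf=\partial_{s_1}\Psi(\|\Delta\vecf\|)$ and $\Phi(d)^{-1}=-\partial^2_{s_1\partial s_2}\Psi(\mathscr{D})$; your version is marginally cleaner because integrating $\Phi(d)^{-1}$ directly avoids the antipodal boundary terms at $s_2=s_1\pm\mathcal{L}/2$ that the paper has to dispose of separately. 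The boundary analysis via $(\ast)$ and $(\dagger)$ is also the paper's argument. One small inaccuracy there: the $s_2=s_1-\epsilon$ contribution does not reduce to $(\dagger)$ by a bare shift $s_1\mapsto s_1-\epsilon$, because after shifting the inner integrand becomes $\|\vectau(s_1+\epsilon)-\vectau(v)\|^2$ rather than $\|\vectau(s_1)-\vectau(v)\|^2$. The paper fixes this by first splitting off $\Lambda(r_-)\{(\vectau(s_1)-\vectau(s_1-\epsilon))\cdot(\vecf(s_1)-\vecf(s_1-\epsilon))\}$, which is an exact $s_1$-derivative of $\tfrac12\int^{r_-^2}\tilde\Lambda$ and integrates to zero, and only then shifting; you need that observation (or an equivalent one) to land precisely on $(\ast)$ and $(\dagger)$.

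The one genuine gap is the absolute convergence of $\mathcal{E}_{\Phi,2}$, which you dismiss as routine. It is not: $W=\langle\vectau(s_1)\wedge\vece,\vectau(s_2)\wedge\vece\rangle=\langle P\vectau(s_1),P\vectau(s_2)\rangle$ (with $P$ the projection onto $\vece^\perp$) is of order one for generic pairs of points, so bounding $\iint(\Phi(r)^{-1}-\Lambda(r))|W|$ requires the chord--tangent estimate $\|P\vectau(s_i)\|^2\leqq 2(1-\vectau(s_i)\cdot\vece)\leqq r^{-1}\int_{s_2}^{s_1}\|\vectau(s_i)-\vectau(u)\|^2\,du$, followed by a Fubini argument that reduces the weight $\int_h^{\mathcal{L}/2}(x\Phi(x))^{-1}dx$ to $-\Lambda(h)\leqq\Phi(h)^{-1}$ --- a step that itself uses (A.5)(b) via an integration by parts. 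This can be pushed through under (A.1)--(A.5), but it is the hardest estimate in your outline and you have not supplied it. The paper sidesteps it entirely with an elementary algebraic identity: writing
\[
	\mathscr{M}_{\Phi,1}+\mathscr{M}_{\Phi,2}
	=\tfrac14\bigl(\Phi(r)^{-1}-\Lambda(r)\bigr)
	\Bigl\{\bigl|\Delta\vectau\cdot\vece\bigr|^2
	+\bigl\|(\vectau(s_1)+\vectau(s_2))\wedge\vece\bigr\|_{\mathbb{R}^n}^2\Bigr\}
	+\tfrac14\bigl(\Phi(r)^{-1}+\Lambda(r)\bigr)\|\Delta\vectau\|_{\mathbb{R}^n}^2 ,
\]
which is pointwise non-negative by (A.5)(b); absolute integrability of the sum then follows for free from the convergence of the principal value, and $\mathscr{M}_{\Phi,2}$ is integrable as a difference. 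You should either adopt that identity or write out the chord--angle estimate in full; as it stands, the absolute-convergence claim for $\mathcal{E}_{\Phi,2}$ is asserted rather than proved.
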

\par
This decomposition theorem corresponds to that gained in \cite{IshizekiNagasawaI} in case of $ \Phi (x) = x^2 $. 
We give a proof in \S~\ref{Proof}. 
\par
In \cite{IshizekiNagasawaII} we gave a first and second variational formula in use of this theorem and we had estimates in some  functional spaces. 
In \S~\ref{Variational formulae}, 
we derive those formulae for the generalized O'Hara's energy. 
\section{Proof of the Main theorem}
\label{Proof}
\par
We denote energy density of $ \mathcal{E}_\Phi ( \vecf ) $,
$ \mathcal{E}_{ \Phi , 1 } ( \vecf ) $,
$ \mathcal{E}_{ \Phi , 2 } ( \vecf ) $ as $ \mathscr{M}_\Phi ( \vecf ) $,
$ \mathscr{M}_{ \Phi , 1 } ( \vecf ) $,
$ \mathscr{M}_{ \Phi , 2 } ( \vecf ) $, respectively. 
Since $ \mathcal{E}_\Phi $ is non-negative from the condition (A.1), it holds that 
\[
	\mathcal{E}_\Phi ( \vecf )
	=
	\lim_{ \epsilon \to + 0 }
	\int_{ \mathbb{R} / \mathcal{L} \mathbb{Z} }
	\left(
	\int_{ s_1 - \frac { \mathcal{L} } 2 + \epsilon }^{ s_1 - \epsilon }
	+
	\int_{ s_1 + \epsilon }^{ s_1 + \frac { \mathcal{L} } 2 - \epsilon }
	\right)
	\mathscr{M}_\Phi ( \vecf ) \,
	d s_2 d s_1. 
\]
We deform $ \mathscr{M}_\Phi ( \vecf ) $ which is a function with regard to $ ( s_1 , s_2 ) $. 
Assuming the condition (A.2), we set 
\[
	\Psi (x) = - \int \left( \int_x^\infty \frac { dt } { \Phi (t) } \right) dx. 
\]
An integral with regard to $ dx $ is an indefinite integral. 
\begin{lem}
If $ s_1 $ and $ s_2 $ satisfy $ 0 < | s_1 - s_2 | < \frac { \mathcal{L} } 2 $, then it follows that 
\begin{align*}
	\mathscr{M}_\Phi ( \vecf )
	= & \
	\mathscr{M}_{ \Phi , 1 } ( \vecf )
	+
	\mathscr{M}_{ \Phi , 2 } ( \vecf )
	\\
	& \quad
	+ \,
	\frac { \partial^2 } { \partial s_1 \partial s_2 }
	\left(
	\Psi ( \mathscr{D} ( \vecf ( s_1 ) , \vecf ( s_2 ) )
	-
	\Psi ( \| \vecf ( s_1 ) - \vecf ( s_2 ) \|_{ \mathbb{R}^n } )
	\right). 
\end{align*}
\end{lem}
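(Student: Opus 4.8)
The plan is to establish the asserted equality as a pointwise identity in $(s_1,s_2)$ on the set $\{ 0 < | s_1 - s_2 | < \frac{\mathcal{L}}2 \}$, where both sides are honest functions of $(s_1,s_2)$. The whole argument is a chain-rule computation feeding off the definitions of $\Psi$, $\Lambda$ and $\Phi$, so no structural hypothesis beyond (A.1)--(A.2) is needed for this step. First I would record the two derivatives of $\Psi$ that drive everything: from $\Psi(x) = - \int \left( \int_x^\infty dt / \Phi(t) \right) dx$ one gets $\Psi'(x) = - \int_x^\infty dt / \Phi(t)$ and hence $\Psi''(x) = 1/\Phi(x)$, while comparison with the definition of $\Lambda$ yields the crucial relation $\Psi'(x) = x \Lambda(x)$. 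These let me convert every derivative of $\Psi$ back into $\Phi$ and $\Lambda$.

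Next I treat the intrinsic-distance term. Since $\vecf$ is parameterized by arc length and $0 < | s_1 - s_2 | < \frac{\mathcal{L}}2$, one has $\mathscr{D}(\vecf(s_1),\vecf(s_2)) = | s_1 - s_2 |$, so $\Psi(\mathscr{D})$ depends only on $u = s_1 - s_2$. Differentiating, and using that the sign factor squares to $1$ with no contribution on the diagonal (which is excluded), I expect $\frac{\partial^2}{\partial s_1 \partial s_2} \Psi(| s_1 - s_2 |) = - \Psi''(| s_1 - s_2 |) = - 1/\Phi(\mathscr{D})$, which already reproduces the second term of $\mathscr{M}_\Phi(\vecf)$.

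The heart of the matter is differentiating $\Psi(\| \Delta \vecf \|_{\mathbb{R}^n})$. Writing $r = \| \Delta \vecf \|_{\mathbb{R}^n}$, $A = \langle \Delta \vecf , \vectau(s_1) \rangle_{\mathbb{R}^n}$, $B = \langle \Delta \vecf , \vectau(s_2) \rangle_{\mathbb{R}^n}$ and $C = \langle \vectau(s_1) , \vectau(s_2) \rangle_{\mathbb{R}^n}$, I would use $\partial_{s_1} r = A/r$ and $\partial_{s_2} r = -B/r$ together with $\partial_{s_1}\langle \Delta\vecf , \vectau(s_2)\rangle_{\mathbb{R}^n} = C$, and after substituting $\Psi'' = 1/\Phi$ and $\Psi'(r) = r\Lambda(r)$ the product and chain rules should give
\[
	\frac{\partial^2}{\partial s_1 \partial s_2} \Psi(r)
	= - \frac{1}{\Phi(r)} \frac{AB}{r^2} - \Lambda(r) C + \Lambda(r) \frac{AB}{r^2}.
\]
A point worth stressing is that $\partial_{s_1} \vectau(s_2) = 0$, so no second derivative of $\vecf$ enters and the identity makes sense for $W^{1,\infty}$ curves. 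In the same variables I would then rewrite the two density pieces: $\| \Delta \vectau \|_{\mathbb{R}^n}^2 = 2(1-C)$ because $\| \vectau \|_{\mathbb{R}^n} \equiv 1$, so $\mathscr{M}_{\Phi,1}(\vecf) = (1-C)/\Phi(r)$, and the Lagrange identity $\langle a \wedge b , c \wedge d \rangle = \langle a,c\rangle \langle b,d\rangle - \langle a,d\rangle \langle b,c\rangle$ gives $\left\langle \vectau(s_1) \wedge \frac{\Delta\vecf}{r} , \vectau(s_2) \wedge \frac{\Delta\vecf}{r} \right\rangle_{{\bigwedge}^2 \mathbb{R}^n} = C - AB/r^2$, whence $\mathscr{M}_{\Phi,2}(\vecf) = \left( \tfrac{1}{\Phi(r)} - \Lambda(r) \right)\left( C - \tfrac{AB}{r^2} \right)$.

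Finally I would add $\mathscr{M}_{\Phi,1}(\vecf) + \mathscr{M}_{\Phi,2}(\vecf)$ to $\frac{\partial^2}{\partial s_1 \partial s_2}(\Psi(\mathscr{D}) - \Psi(r))$ and expand: the terms carrying $\Lambda(r) C$, $\Lambda(r) AB/r^2$ and $\Phi(r)^{-1} AB/r^2$ cancel in pairs, while the $C/\Phi(r)$ and $(1-C)/\Phi(r)$ terms combine to $1/\Phi(r)$; together with the intrinsic-distance term $-1/\Phi(\mathscr{D})$ this is exactly $\mathscr{M}_\Phi(\vecf)$. I expect no deep obstacle here: the difficulty is purely organizational, namely carrying out the product-rule differentiation of $\Psi(r)$ without sign errors and then verifying the threefold cancellation. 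Once the substitutions $\Psi'' = 1/\Phi$ and $\Psi'(x) = x\Lambda(x)$ are in place, the algebra collapses to the claimed identity.
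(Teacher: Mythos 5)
Your proposal is correct and follows essentially the same route as the paper: both compute $\frac{\partial^2}{\partial s_1 \partial s_2}\Psi(\mathscr{D}) = -1/\Phi(\mathscr{D})$ and $\frac{\partial^2}{\partial s_1 \partial s_2}\Psi(\|\Delta\vecf\|_{\mathbb{R}^n})$ by the chain rule with $\Psi''=1/\Phi$ and $\Psi'(x)=x\Lambda(x)$, and then match terms with $\mathscr{M}_{\Phi,1}+\mathscr{M}_{\Phi,2}$ via the Lagrange identity for $\langle\,\cdot\wedge\cdot\,,\,\cdot\wedge\cdot\,\rangle_{{\bigwedge}^2\mathbb{R}^n}$. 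The only cosmetic difference is that you expand the wedge inner product into the scalars $A$, $B$, $C$ and verify the cancellation there, whereas the paper applies the same identity in the opposite direction to assemble the wedge term directly.
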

\begin{proof}	
When $ | s_1 - s_2 | \leqq \frac { \mathcal{L} } 2 $, $ \mathscr{D} ( \vecf ( s_1 ) , \vecf ( s_2 ) )^2 = ( s_1 - s_2 )^2 $ holds. 
Therefore if $ 0 < | s_1 - s_2 | < \frac { \mathcal{L} } 2 $, it follows that 
\[
	\frac { \partial \mathscr{D} ( \vecf ( s_1 ) , \vecf ( s_2 ) ) } { \partial s_1 }
	=
	\frac 1 { 2 \mathscr{D} ( \vecf ( s_1 ) , \vecf ( s_2 ) ) }
	\frac { \partial \mathscr{D} ( \vecf ( s_1 ) , \vecf ( s_2 ) )^2 } { \partial s_1 }
	=
	\frac { s_1 - s_2 } { \mathscr{D} ( \vecf ( s_1 ) , \vecf ( s_2 ) ) }
	,
\]
\[
	\frac { \partial \mathscr{D} ( \vecf ( s_1 ) , \vecf ( s_2 ) ) } { \partial s_2 }
	=
	\frac 1 { 2 \mathscr{D} ( \vecf ( s_1 ) , \vecf ( s_2 ) ) }
	\frac { \partial \mathscr{D} ( \vecf ( s_1 ) , \vecf ( s_2 ) )^2 } { \partial s_2 }
	=
	\frac { s_2 - s_1 } { \mathscr{D} ( \vecf ( s_1 ) , \vecf ( s_2 ) ) }
	,
\]
\begin{align*}
	\frac { \partial^2 \mathscr{D} ( \vecf ( s_1 ) , \vecf ( s_2 ) ) } { \partial s_1 \partial s_2 }
	= & \
	\frac \partial { \partial s_1 }
	\frac { s_2 - s_1 } { \mathscr{D} ( \vecf ( s_1 ) , \vecf ( s_2 ) ) }
	\\
	= & \
	- \frac 1 { \mathscr{D} ( \vecf ( s_1 ) , \vecf ( s_2 ) ) }
	-
	\frac { s_2 - s_1 } { \mathscr{D} ( \vecf ( s_1 ) , \vecf ( s_2 ) )^2 }
	\frac { \partial \mathscr{D} ( \vecf ( s_1 ) , \vecf ( s_2 ) ) } { \partial s_1 }
	\\
	= & \
	- \frac 1 { \mathscr{D} ( \vecf ( s_1 ) , \vecf ( s_2 ) ) }
	+
	\frac { ( s_1 - s_2 )^2 } { \mathscr{D} ( \vecf ( s_1 ) , \vecf ( s_2 ) )^3 }
	\\
	= & \
	0. 
\end{align*}
Calculating straightforward, we obtain 
\begin{align*}
	&
	\frac { \partial^2 } { \partial s_1 \partial s_2 } \Psi ( \mathscr{D} ( \vecf ( s_1 ) , \vecf ( s_2 ) )
	\\
	& \quad
	=
	\frac \partial { \partial s_1 } \left( \Psi^\prime ( \mathscr{D} ( \vecf ( s_1 ) , \vecf ( s_2 ) )
	\frac { \partial \mathscr{D} ( \vecf ( s_1 ) , \vecf ( s_2 ) ) } { \partial s_2 }
	\right)
	\\
	& \quad
	=
	\Psi^{ \prime \prime } ( \mathscr{D} ( \vecf ( s_1 ) , \vecf ( s_2 ) )
	\frac { \partial \mathscr{D} ( \vecf ( s_1 ) , \vecf ( s_2 ) ) } { \partial s_1 } 
	\frac { \partial \mathscr{D} ( \vecf ( s_1 ) , \vecf ( s_2 ) ) } { \partial s_2 }
	\\
	& \quad \qquad
	+ \,
	\Psi^\prime ( \mathscr{D} ( \vecf ( s_1 ) , \vecf ( s_2 ) )
	\frac { \partial^2 \mathscr{D} ( \vecf ( s_1 ) , \vecf ( s_2 ) ) } { \partial s_1 \partial s_2 }
	\\
	& \quad
	=
	- \frac 1 { \Phi ( \mathscr{D} ( \vecf ( s_1 ) , \vecf ( s_2 ) ) ) }
\end{align*}
and therefore we show that 
\begin{align*}
	\mathscr{M}_\Phi ( \vecf )
	= & \
	\frac 1 { \Phi ( \| \vecf ( s_1 ) - \vecf ( s_2 ) \|_{ \mathbb{R}^n } ) }
	-
	\frac 1 { \Phi ( \mathscr{D} ( \vecf ( s_1 ) , \vecf ( s_2 ) ) ) }
	\\
	= & \
	\frac 1 { \Phi ( \| \vecf ( s_1 ) - \vecf ( s_2 ) \|_{ \mathbb{R}^n } ) }
	+
	\frac { \partial^2 } { \partial s_1 \partial s_2 } \Psi ( \| \vecf ( s_1 ) - \vecf ( s_2 ) \|_{ \mathbb{R}^n } )
	\\
	& \quad
	+ \,
	\frac { \partial^2 } { \partial s_1 \partial s_2 }
	\left(
	\Psi ( \mathscr{D} ( \vecf ( s_1 ) , \vecf ( s_2 ) )
	-
	\Psi ( \| \vecf ( s_1 ) - \vecf ( s_2 ) \|_{ \mathbb{R}^n } )
	\right). 
\end{align*}
Using the above calculation, we obtain 
\begin{align*}
	\frac \partial { \partial s_1 } \| \vecf ( s_1 ) - \vecf ( s_2 ) \|_{ \mathbb{R}^n }
	= & \
	\frac { \vectau ( s_1 ) \cdot ( \vecf ( s_1 ) - \vecf ( s_2 ) ) }
	{ \| \vecf ( s_1 ) - \vecf ( s_2 ) \|_{ \mathbb{R}^n } }
	=
	\vectau ( s_1 ) \cdot
	\frac { \Delta \vecf } { \| \Delta \vecf \|_{ \mathbb{R}^n } }
	,
	\\
	\frac \partial { \partial s_2 } \| \vecf ( s_1 ) - \vecf ( s_2 ) \|_{ \mathbb{R}^n }
	= & \
	-
	\frac { \vectau ( s_2 ) \cdot ( \vecf ( s_1 ) - \vecf ( s_2 ) ) }
	{ \| \vecf ( s_1 ) - \vecf ( s_2 ) \|_{ \mathbb{R}^n } }
	=
	-
	\vectau ( s_2 ) \cdot
	\frac { \Delta \vecf } { \| \Delta \vecf \|_{ \mathbb{R}^n } }
	,
\end{align*}
\begin{align*}
	&
	\frac { \partial^2 } { \partial s_1 \partial s_2 }
	\| \vecf ( s_1 ) - \vecf ( s_2 ) \|_{ \mathbb{R}^n }
	\\
	& \quad
	=
	- \frac { \vectau ( s_1 ) \cdot \vectau ( s_2 ) }
	{ \| \vecf ( s_1 ) - \vecf ( s_2 ) \|_{ \mathbb{R}^n } }
	+
	\frac {
	\{ \vectau ( s_1 ) \cdot ( \vecf ( s_1 ) - \vecf ( s_2 ) ) \}
	\cdot
	\{ \vectau ( s_2 ) \cdot ( \vecf ( s_1 ) - \vecf ( s_2 ) ) \}
	}
	{ \| \vecf ( s_1 ) - \vecf ( s_2 ) \|_{ \mathbb{R}^n }^3 }
	\\
	& \quad
	=
	- \frac 1 { \| \Delta \vecf \|_{ \mathbb{R}^n } }
	\left\{
	\vectau ( s_1 ) \cdot \vectau ( s_2 )
	-
	\left(
	\vectau ( s_1 ) \cdot \frac { \Delta \vecf } { \| \Delta \vecf \|_{ \mathbb{R}^n } }
	\right)
	\left(
	\vectau ( s_2 ) \cdot \frac { \Delta \vecf } { \| \Delta \vecf \|_{ \mathbb{R}^n } }
	\right)
	\right\}
	\\
	& \quad
	=
	- \frac 1 { \| \Delta \vecf \|_{ \mathbb{R}^n } }
	\left\langle
	\vectau ( s_1 ) \wedge \frac { \Delta \vecf } { \| \Delta \vecf \|_{ \mathbb{R}^n } }
	,
	\vectau ( s_2 ) \wedge \frac { \Delta \vecf } { \| \Delta \vecf \|_{ \mathbb{R}^n } }
	\right\rangle_{ {\bigwedge}^2 \mathbb{R}^n }
\end{align*}
and therefore
\begin{align*}
	&
	\frac \partial { \partial s_1 } \| \vecf ( s_1 ) - \vecf ( s_2 ) \|_{ \mathbb{R}^n }
	\frac \partial { \partial s_2 } \| \vecf ( s_1 ) - \vecf ( s_2 ) \|_{ \mathbb{R}^n }
	\\
	& \quad
	=
	-
	\vectau ( s_1 ) \cdot \vectau ( s_2 )
	+
	\vectau ( s_1 ) \cdot \vectau ( s_2 )
	-
	\left(
	\vectau ( s_1 ) \cdot \frac { \Delta \vecf } { \| \Delta \vecf \|_{ \mathbb{R}^n } }
	\right)
	\left(
	\vectau ( s_2 ) \cdot \frac { \Delta \vecf } { \| \Delta \vecf \|_{ \mathbb{R}^n } }
	\right)
	\\
	& \quad
	=
	-
	\vectau ( s_1 ) \cdot \vectau ( s_2 )
	+
	\left\langle
	\vectau ( s_1 ) \wedge \frac { \Delta \vecf } { \| \Delta \vecf \|_{ \mathbb{R}^n } }
	,
	\vectau ( s_2 ) \wedge \frac { \Delta \vecf } { \| \Delta \vecf \|_{ \mathbb{R}^n } }
	\right\rangle_{ {\bigwedge}^2 \mathbb{R}^n }
	. 
\end{align*}
Finally, we have
\begin{align*}
	&
	\frac { \partial^2 } { \partial s_1 \partial s_2 }
	\Psi ( \| \vecf ( s_1 ) - \vecf ( s_2 ) \|_{ \mathbb{R}^n } )
	\\
	& \quad
	=
	-
	\Psi^{ \prime \prime } ( \| \Delta \vecf \|_{ \mathbb{R}^n } )
	( \vectau ( s_1 ) \cdot \vectau ( s_2 ) )
	\\
	& \quad \qquad
	+ \,
	\left( 
	\Psi^{ \prime \prime } ( \| \Delta \vecf \|_{ \mathbb{R}^n } )
	-
	\frac { \Psi^\prime ( \| \Delta \vecf \|_{ \mathbb{R}^n } ) }
	{ \| \Delta \vecf \|_{ \mathbb{R}^n } }
	\right)
	\left\langle
	\vectau ( s_1 ) \wedge \frac { \Delta \vecf } { \| \Delta \vecf \|_{ \mathbb{R}^n } }
	,
	\vectau ( s_2 ) \wedge \frac { \Delta \vecf } { \| \Delta \vecf \|_{ \mathbb{R}^n } }
	\right\rangle_{ {\bigwedge}^2 \mathbb{R}^n }
	\\
	& \quad
	=
	-
	\frac
	{ \vectau ( s_1 ) \cdot \vectau ( s_2 ) }
	{ \Phi ( \| \Delta \vecf \|_{ \mathbb{R}^n } ) }
	\\
	& \quad \qquad
	+ \,
	\left(
	\frac 1 { \Phi ( \| \Delta \vecf \|_{ \mathbb{R}^n } ) }
	-
	\Lambda ( \| \Delta \vecf \|_{ \mathbb{R}^n } )
	\right)
	\left\langle
	\vectau ( s_1 ) \wedge \frac { \Delta \vecf } { \| \Delta \vecf \|_{ \mathbb{R}^n } }
	,
	\vectau ( s_2 ) \wedge \frac { \Delta \vecf } { \| \Delta \vecf \|_{ \mathbb{R}^n } }
	\right\rangle_{ {\bigwedge}^2 \mathbb{R}^n }
	\\
	& \quad
	=
	-
	\frac
	{ \vectau ( s_1 ) \cdot \vectau ( s_2 ) }
	{ \Phi ( \| \Delta \vecf \|_{ \mathbb{R}^n } ) }
	+
	\mathscr{M}_{ \Phi , 2 } ( \vecf )
\end{align*}
and we conclude that 
\begin{align*}
	&
	\frac 1 { \Phi ( \| \vecf ( s_1 ) - \vecf ( s_2 ) \|_{ \mathbb{R}^n } ) }
	+
	\frac { \partial^2 } { \partial s_1 \partial s_2 } \Psi ( \| \vecf ( s_1 ) - \vecf ( s_2 ) \|_{ \mathbb{R}^n } )
	\\
	& \quad
	=
	\frac { 1 - \vectau ( s_1 ) \cdot \vectau ( s_2 ) } { \Phi ( \| \vecf ( s_1 ) - \vecf ( s_2 ) \|_{ \mathbb{R}^n } ) }
	+
	\mathscr{M}_{ \Phi , 2 } ( \vecf )
	\\
	& \quad
	=
	\mathscr{M}_{ \Phi , 1 } ( \vecf )
	+
	\mathscr{M}_{ \Phi , 2 } ( \vecf ). 
\end{align*}
\qed
\end{proof}
\par
\begin{prop}
Under the assumptions {\rm (A.1)--(A.4)}, it holds that 
\begin{align*}
	&
	\lim_{ \epsilon \to + 0 }
	\int_{ \mathbb{R} / \mathcal{L} \mathbb{Z} }
	\left(
	\int_{ s_1 - \frac { \mathcal{L} } 2 + \epsilon }^{ s_1 - \epsilon }
	+
	\int_{ s_1 + \epsilon }^{ s_1 + \frac { \mathcal{L} } 2 - \epsilon }
	\right)
	\frac { \partial^2 } { \partial s_1 \partial s_2 }
	\left(
	\Psi ( \mathscr{D} ( \vecf ( s_1 ) , \vecf ( s_2 ) )
	-
	\Psi ( \| \vecf ( s_1 ) - \vecf ( s_2 ) \|_{ \mathbb{R}^n } )
	\right)
	d s_2 d s_1
	\\
	& \quad
	=
	2 \mathcal{L} \int_{ \frac { \mathcal{L} } 2 }^\infty
	\frac { dx } { \Phi (x) }. 
\end{align*}
\end{prop}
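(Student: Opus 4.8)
The plan is to perform the inner $s_2$-integration explicitly before letting $\epsilon\to+0$. Write $G(s_1,s_2)=\Psi(\mathscr{D}(\vecf(s_1),\vecf(s_2)))-\Psi(\|\vecf(s_1)-\vecf(s_2)\|_{\mathbb{R}^n})$, so that the integrand is $\partial^2 G/\partial s_1\partial s_2$. On each of the two $s_2$-intervals the pair $(s_1,s_2)$ stays away from both the diagonal $s_2=s_1$ and the antidiagonal $|s_1-s_2|=\mathcal{L}/2$, and there $\partial G/\partial s_1$ is Lipschitz in $s_2$ (because $\vecf$ is bi-Lipschitz by {\rm (A.3)}, $\Psi^{\prime\prime}=1/\Phi$ is continuous, and $\|\Delta\vecf\|_{\mathbb{R}^n}$ is bounded away from $0$); hence the fundamental theorem of calculus reduces each inner integral to boundary values of $\partial G/\partial s_1$. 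Using $\mathscr{D}=|s_1-s_2|$ for $|s_1-s_2|\le\mathcal{L}/2$ one finds
\[
\frac{\partial G}{\partial s_1}=\Psi^\prime(|s_1-s_2|)\,\mathrm{sgn}(s_1-s_2)-\Psi^\prime(\|\Delta\vecf\|_{\mathbb{R}^n})\,\vectau(s_1)\cdot\frac{\Delta\vecf}{\|\Delta\vecf\|_{\mathbb{R}^n}},
\]
which I would evaluate at the four endpoints $s_2=s_1\mp\epsilon$ and $s_2=s_1\mp(\mathcal{L}/2-\epsilon)$, splitting each value into a ``distance part'' (first summand) and a ``curve part'' (second).

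The four distance parts combine to the $s_1$-independent quantity $2\Psi^\prime(\epsilon)-2\Psi^\prime(\mathcal{L}/2-\epsilon)$, whose integral over $\mathbb{R}/\mathcal{L}\mathbb{Z}$ equals $2\mathcal{L}\Psi^\prime(\epsilon)-2\mathcal{L}\Psi^\prime(\mathcal{L}/2-\epsilon)$. Since $\Psi^\prime(x)=-\int_x^\infty dt/\Phi(t)$ is continuous at $x=\mathcal{L}/2$, the second term tends to $-2\mathcal{L}\Psi^\prime(\mathcal{L}/2)=2\mathcal{L}\int_{\mathcal{L}/2}^\infty dx/\Phi(x)$, which is precisely the asserted value. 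The two curve parts at the far endpoints $s_2=s_1\mp(\mathcal{L}/2-\epsilon)$ both converge, as $\epsilon\to+0$, to $\pm\Psi^\prime(\|\vecg\|_{\mathbb{R}^n})\,\vectau(s_1)\cdot\vecg/\|\vecg\|_{\mathbb{R}^n}$ with $\vecg=\vecf(s_1)-\vecf(s_1+\mathcal{L}/2)$ (their arguments approach the common antipodal point), so they cancel pointwise; as $\|\vecg\|_{\mathbb{R}^n}$ is bounded below by bi-Lipschitzness and $\Psi^\prime$ is bounded there, dominated convergence makes their $s_1$-integral vanish.

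It then remains to treat the near-diagonal curve parts together with the divergent constant $2\mathcal{L}\Psi^\prime(\epsilon)$, which is the heart of the matter since each blows up separately. After substituting $s_1\mapsto s_1-\epsilon$ in the $s_2=s_1+\epsilon$ contribution, the two near-diagonal curve parts merge into $-\int\Psi^\prime(\|\vecv\|_{\mathbb{R}^n})(\vectau(s_1)+\vectau(s_1-\epsilon))\cdot\vecv/\|\vecv\|_{\mathbb{R}^n}\,ds_1$ with $\vecv=\vecf(s_1)-\vecf(s_1-\epsilon)$. Writing $\Psi^\prime(x)/x=\Lambda(x)$, expanding $\vecv=\int_{s_1-\epsilon}^{s_1}\vectau(\sigma)\,d\sigma$, and using $\vectau(a)\cdot\vectau(b)=1-\tfrac12\|\vectau(a)-\vectau(b)\|_{\mathbb{R}^n}^2$ (valid as $\|\vectau\|_{\mathbb{R}^n}=1$ a.e.), the combined near-diagonal quantity $J_\epsilon:=2\mathcal{L}\Psi^\prime(\epsilon)+(\text{near curve parts})$ splits as
\[
J_\epsilon=\int 2\epsilon\bigl(\Lambda(\epsilon)-\Lambda(\|\vecv\|_{\mathbb{R}^n})\bigr)ds_1+\frac12\int\Lambda(\|\vecv\|_{\mathbb{R}^n})\int_{s_1-\epsilon}^{s_1}\bigl(\|\vectau(s_1)-\vectau(\sigma)\|_{\mathbb{R}^n}^2+\|\vectau(s_1-\epsilon)-\vectau(\sigma)\|_{\mathbb{R}^n}^2\bigr)d\sigma\,ds_1.
\]
The first integral tends to $0$ by $(\ast)$ (after $s_1\mapsto s_1+\epsilon$), and each half of the second tends to $0$ by $(\dagger)$ — the second half directly, the first half after applying $(\dagger)$ to the orientation-reversed curve $s\mapsto\vecf(-s)$. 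Thus $J_\epsilon\to0$, and summing the three contributions gives the claimed limit.

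The principal obstacle is this last step: the two divergences $2\mathcal{L}\Psi^\prime(\epsilon)$ and the near-diagonal curve integral cancel only in a quantitative sense, and extracting the remaining $o(1)$ error is exactly what hypotheses $(\ast)$ and $(\dagger)$ are designed to control. Consequently the sign bookkeeping of the four boundary terms and the symmetrizing substitution must be arranged so that the residual is matched term by term with those two hypotheses; the far-endpoint and distance contributions, by contrast, are routine.
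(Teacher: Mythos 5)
Your proof is correct and follows essentially the same route as the paper: integrate in $s_2$ by the fundamental theorem of calculus, let the four distance boundary terms produce the constant $2\mathcal{L}\int_{\mathcal{L}/2}^{\infty}dx/\Phi(x)$ plus a divergent piece, kill the antipodal curve terms by periodicity and dominated convergence, and cancel the divergence against the near-diagonal curve terms using $(\ast)$ and $(\dagger)$ of (A.4). The only organizational difference is that the paper splits off an exact $s_1$-derivative (which integrates to zero over the circle) before symmetrizing, so that a single application of $(\dagger)$ suffices, whereas your symmetrization via the shift $s_1\mapsto s_1-\epsilon$ leaves one half requiring $(\dagger)$ for the orientation-reversed curve --- which is legitimate, since the hypotheses of (A.4) are preserved under $s\mapsto -s$.
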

\begin{proof}
If $ 0 < | s_1 - s_2 | < \frac { \mathcal{L} } 2 $, an easy calculation 
\begin{align*}
	&
	\frac \partial { \partial s_1 }
	\left(
	\Psi ( \mathscr{D} ( \vecf ( s_1 ) , \vecf ( s_2 ) )
	-
	\Psi ( \| \vecf ( s_1 ) - \vecf ( s_2 ) \|_{ \mathbb{R}^n } )
	\right)
	\\
	& \quad
	=
	\Psi^\prime ( | \Delta s | ) \frac { \partial | \Delta s | } { \partial s_1 }
	-
	\Psi^\prime ( \| \Delta \vecf \|_{ \mathbb{R}^n } ) \frac { \partial \| \Delta \vecf \|_{ \mathbb{R}^n } } { \partial s_1 }
	\\
	& \quad
	=
	\Psi^\prime ( | \Delta s | ) \frac { \Delta s } { | \Delta s | }
	-
	\Psi^\prime ( \| \Delta \vecf \|_{ \mathbb{R}^n } )
	\left( \vectau ( s_1 ) \cdot \frac { \Delta \vecf } { \| \Delta \vecf \|_{ \mathbb{R}^n } } \right)
	\\
	& \quad
	=
	\Delta s
	\left\{
	- \Lambda ( | \Delta s | )
	+ \Lambda ( \| \Delta \vecf \|_{ \mathbb{R}^n } )
	\left( \vectau ( s_1 ) \cdot \frac { \Delta \vecf } { \Delta s } \right)
	\right\}
\end{align*}
leads to 
\begin{small}
\begin{align*}
	&
	\left(
	\int_{ s_1 - \frac { \mathcal{L} } 2 + \epsilon }^{ s_1 - \epsilon }
	+
	\int_{ s_1 + \epsilon }^{ s_1 + \frac { \mathcal{L} } 2 - \epsilon }
	\right)
	\frac { \partial^2 } { \partial s_1 \partial s_2 }
	\left(
	\Psi ( \mathscr{D} ( \vecf ( s_1 ) , \vecf ( s_2 ) )
	-
	\Psi ( \| \vecf ( s_1 ) - \vecf ( s_2 ) \|_{ \mathbb{R}^n } )
	\right)
	d s_2
	\\
	& \quad
	=
	\left[
	\Delta s 
	\left\{
	- \Lambda ( | \Delta s | )
	+ \Lambda ( \| \Delta \vecf \|_{ \mathbb{R}^n } )
	\left( \vectau ( s_1 ) \cdot \frac { \Delta \vecf } { \Delta s } \right)
	\right\}
	\right]_{ s_2 = s_1 - \frac { \mathcal{L} } 2 + \epsilon }^{ s_2 = s_1 - \epsilon }
	\\
	& \quad \qquad
	+ \,
	\left[
	\Delta s 
	\left\{
	- \Lambda ( | \Delta s | )
	+ \Lambda ( \| \Delta \vecf \|_{ \mathbb{R}^n } )
	\left( \vectau ( s_1 ) \cdot \frac { \Delta \vecf } { \Delta s } \right)
	\right\}
	\right]_{ s_2 = s_1 + \epsilon }^{ s_2 = s_1 + \frac { \mathcal{L} } 2 - \epsilon }
	\\
	& \quad
	=
	\epsilon
	\left\{
	- \Lambda ( \epsilon )
	+
	\Lambda ( \| \vecf ( s_1 ) - \vecf ( s_1 - \epsilon ) \|_{ \mathbb{R}^n } )
	\left( \vectau ( s_1 ) \cdot \frac { \vecf ( s_1 ) - \vecf ( s_1 - \epsilon ) } \epsilon \right)
	\right\}
	\\
	& \quad \qquad
	- \,
	\left( \frac { \mathcal{L} } 2 - \epsilon \right)
	\left\{
	- \Lambda \left( \frac { \mathcal{L} } 2 - \epsilon \right)
	+
	\Lambda \left( \left\| \vecf ( s_1 ) - \vecf \left( s_1 - \frac { \mathcal{L} } 2 + \epsilon \right) \right\|_{ \mathbb{R}^n } \right)
	\left(
	\vectau ( s_1 ) \cdot
	\frac { \vecf ( s_1 ) - \vecf \left( s_1 - \frac { \mathcal{L} } 2 + \epsilon \right) }
	{ \frac { \mathcal{L} } 2 - \epsilon }
	\right)
	\right\}
	\\
	& \quad \qquad
	- \,
	\left( \frac { \mathcal{L} } 2 - \epsilon \right)
	\left\{
	- \Lambda \left( \frac { \mathcal{L} } 2 - \epsilon \right)
	+
	\Lambda \left( \left\| \vecf ( s_1 ) - \vecf \left( s_1 + \frac { \mathcal{L} } 2 - \epsilon \right) \right\|_{ \mathbb{R}^n } \right)
	\left(
	\vectau ( s_1 ) \cdot
	\frac { \vecf ( s_1 ) - \vecf \left( s_1 + \frac { \mathcal{L} } 2 - \epsilon \right) }
	{ - \frac { \mathcal{L} } 2 + \epsilon }
	\right)
	\right\}
	\\
	& \quad \qquad
	- \,
	( - \epsilon )
	\left\{
	- \Lambda ( \epsilon )
	+
	\Lambda ( \| \vecf ( s_1 ) - \vecf ( s_1 + \epsilon ) \|_{ \mathbb{R}^n } )
	\left( \vectau ( s_1 ) \cdot \frac { \vecf ( s_1 ) - \vecf ( s_1 + \epsilon ) } { - \epsilon } \right)
	\right\}
	\\
	& \quad
	=
	-
	2 \epsilon \Lambda ( \epsilon )
	+
	\Lambda ( \| \vecf ( s_1 ) - \vecf ( s_1 - \epsilon ) \|_{ \mathbb{R}^n } )
	\left\{
	( \vectau ( s_1 ) - \vectau ( s_1 - \epsilon ) ) \cdot ( \vecf ( s_1 ) - \vecf ( s_1 - \epsilon ) )
	\right\}
	\\
	& \quad \qquad
	+ \,
	\Lambda ( \| \vecf ( s_1 ) - \vecf ( s_1 - \epsilon ) \|_{ \mathbb{R}^n } )
	\left\{
	\vectau ( s_1 - \epsilon ) \cdot ( \vecf ( s_1 ) - \vecf ( s_1 - \epsilon ) )
	\right\}
	\\
	& \quad \qquad
	- \,
	\Lambda ( \| \vecf ( s_1 ) - \vecf ( s_1 + \epsilon ) \|_{ \mathbb{R}^n } )
	\left\{
	\vectau ( s_1 ) \cdot ( \vecf ( s_1 ) - \vecf ( s_1 + \epsilon ) )
	\right\}
	\\
	& \quad \qquad
	- \,
	2 \left( \frac { \mathcal{L} } 2 - \epsilon \right)
	\Lambda \left( \frac { \mathcal{L} } 2 - \epsilon \right)
	\\
	& \quad \qquad
	- \,
	\Lambda \left( \left\| \vecf ( s_1 ) - \vecf \left( s_1 - \frac { \mathcal{L} } 2 + \epsilon \right) \right\|_{ \mathbb{R}^n } \right)
	\vectau ( s_1 ) \cdot \left( \vecf ( s_1 ) - \vecf \left( s_1 - \frac { \mathcal{L} } 2 + \epsilon \right) \right)
	\\
	& \quad \qquad
	+ \,
	\Lambda \left( \left\| \vecf ( s_1 ) - \vecf \left( s_1 + \frac { \mathcal{L} } 2 - \epsilon \right) \right\|_{ \mathbb{R}^n } \right)
	\vectau ( s_1 ) \cdot \left( \vecf ( s_1 ) - \vecf \left( s_1 + \frac { \mathcal{L} } 2 - \epsilon \right) \right). 
\end{align*}
\end{small}
We define $ \tilde \Lambda $ by
\[
	\Lambda (x) = \tilde \Lambda ( \sqrt x ). 
\]
From
\begin{align*}
	&
	\Lambda ( \| \vecf ( s_1 ) - \vecf ( s_1 - \epsilon ) \|_{ \mathbb{R}^n } )
	\left\{
	( \vectau ( s_1 ) - \vectau ( s_1 - \epsilon ) ) \cdot ( \vecf ( s_1 ) - \vecf ( s_1 - \epsilon ) )
	\right\}
	\\
	& \quad
	=
	\frac 12 \frac d { d s_1 }
	\int^{ \| \smallvecf ( s_1 ) - \smallvecf ( s_1 - \epsilon ) \|_{ \mathbb{R}^n }^2 } \tilde \Lambda (x) \, dx, 
\end{align*}
we know that
\[
	\int_{ \mathbb{R} / \mathcal{L} \mathbb{Z} }
	\Lambda ( \| \vecf ( s_1 ) - \vecf ( s_1 - \epsilon ) \|_{ \mathbb{R}^n }^2 )
	\left\{
	( \vectau ( s_1 ) - \vectau ( s_1 - \epsilon ) ) \cdot ( \vecf ( s_1 ) - \vecf ( s_1 - \epsilon ) )
	\right\}
	d s_1
	= 0. 
\]
Since $ \vecf $ is periodic, we obtain 
\begin{align*}
	&
	\int_{ \mathbb{R} / \mathcal{L} \mathbb{Z} }
	\left[
	\Lambda ( \| \vecf ( s_1 ) - \vecf ( s_1 - \epsilon ) \|_{ \mathbb{R}^n } )
	\left\{
	\vectau ( s_1 - \epsilon ) \cdot ( \vecf ( s_1 ) - \vecf ( s_1 - \epsilon ) )
	\right\}
	\right.
	\\
	& \quad \qquad
	\left.
	- \,
	\Lambda ( \| \vecf ( s_1 ) - \vecf ( s_1 + \epsilon ) \|_{ \mathbb{R}^n } )
	\left\{
	\vectau ( s_1 ) \cdot ( \vecf ( s_1 ) - \vecf ( s_1 + \epsilon ) )
	\right\}
	\right]
	d s_1
	\\
	& \quad
	=
	\int_{ \mathbb{R} / \mathcal{L} \mathbb{Z} }
	\left[
	\Lambda ( \| \vecf ( s_1 + \epsilon ) - \vecf ( s_1 ) \|_{ \mathbb{R}^n } )
	\left\{
	\vectau ( s_1 ) \cdot ( \vecf ( s_1 + \epsilon ) - \vecf ( s_1 ) )
	\right\}
	\right.
	\\
	& \quad \qquad
	\left.
	- \,
	\Lambda ( \| \vecf ( s_1 ) - \vecf ( s_1 + \epsilon ) \|_{ \mathbb{R}^n } )
	\left\{
	\vectau ( s_1 ) \cdot ( \vecf ( s_1 ) - \vecf ( s_1 + \epsilon ) )
	\right\}
	\right]
	d s_1
	\\
	& \quad
	=
	2
	\int_{ \mathbb{R} / \mathcal{L} \mathbb{Z} }
	\Lambda ( \| \vecf ( s_1 + \epsilon ) - \vecf ( s_1 ) \|_{ \mathbb{R}^n } )
	\left\{
	\vectau ( s_1 ) \cdot ( \vecf ( s_1 + \epsilon ) - \vecf ( s_1 ) )
	\right\}
	d s_1
	\\
	& \quad
	=
	2
	\int_{ \mathbb{R} / \mathcal{L} \mathbb{Z} }
	\Lambda ( \| \vecf ( s_1 + \epsilon ) - \vecf ( s_1 ) \|_{ \mathbb{R}^n } )
	\left(
	\vectau ( s_1 ) \cdot \int_{ s_1 }^{ s_1 + \epsilon } \vectau ( s_2 )
	\right)
	d s_2
	d s_1
\end{align*}
and therefore 
\begin{align*}
	&
	\int_{ \mathbb{R} / \mathcal{L} \mathbb{Z} }
	\left[
	- 2 \epsilon \Lambda ( \epsilon )
	+
	\Lambda ( \| \vecf ( s_1 ) - \vecf ( s_1 - \epsilon ) \|_{ \mathbb{R}^n } )
	\left\{
	\vectau ( s_1 - \epsilon ) \cdot ( \vecf ( s_1 ) - \vecf ( s_1 - \epsilon ) )
	\right\}
	\right.
	\\
	& \quad \qquad
	\left.
	- \,
	\Lambda ( \| \vecf ( s_1 ) - \vecf ( s_1 + \epsilon ) \|_{ \mathbb{R}^n } )
	\left\{
	\vectau ( s_1 ) \cdot ( \vecf ( s_1 ) - \vecf ( s_1 + \epsilon ) )
	\right\}
	\right]
	d s_1
	\\
	& \quad
	=
	2 \epsilon
	\int_{ \mathbb{R} / \mathcal{L} \mathbb{Z} }
	\left(
	\Lambda ( \| \vecf ( s_1 ) - \vecf ( s_1 + \epsilon ) \|_{ \mathbb{R}^n } )
	-
	\Lambda ( \epsilon )
	\right) d s_1
	\\
	& \quad \qquad
	- \,
	\int_{ \mathbb{R} / \mathcal{L} \mathbb{Z} }
	\Lambda ( \| \vecf ( s_1 + \epsilon ) - \vecf ( s_1 ) \|_{ \mathbb{R}^n } )
	\int_{ s_1 }^{ s_1 + \epsilon }
	\| \vectau ( s_1 ) - \vectau ( s_2 ) \|_{ \mathbb{R}^n }^2
	d s_2 d s_1
\end{align*}
holds. If $ \epsilon \to + 0 $, the above amount converges to $ 0 $ from (A.4) and obviously 
\[
	-
	\int_{ \mathbb{R} / \mathcal{L} \mathbb{Z} }
	2
	\left( \frac { \mathcal{L} } 2 - \epsilon \right)
	\Lambda \left( \frac { \mathcal{L} } 2 - \epsilon \right)
	d s_1
	\to
	- \mathcal{L}^2 \Lambda \left( \frac { \mathcal{L} } 2 \right)
	=
	2 \mathcal{L} \int_{ \frac { \mathcal{L} } 2 }^\infty
	\frac { dx } { \Phi (x) }. 
\]
follows.
From the Lebesgue's convergence theorem, it holds that 
\begin{align*}
	&
	\int_{ \mathbb{R} / \mathcal{L} \mathbb{Z} }
	\left\{
	-
	\Lambda \left( \left\| \vecf ( s_1 ) - \vecf \left( s_1 - \frac { \mathcal{L} } 2 + \epsilon \right) \right\|_{ \mathbb{R}^n } \right)
	\vectau ( s_1 ) \cdot \left( \vecf ( s_1 ) - \vecf \left( s_1 - \frac { \mathcal{L} } 2 + \epsilon \right) \right)
	\right.
	\\
	& \quad \qquad
	\left.
	+ \,
	\Lambda \left( \left\| \vecf ( s_1 ) - \vecf \left( s_1 + \frac { \mathcal{L} } 2 - \epsilon \right) \right\|_{ \mathbb{R}^n } \right)
	\vectau ( s_1 ) \cdot \left( \vecf ( s_1 ) - \vecf \left( s_1 + \frac { \mathcal{L} } 2 - \epsilon \right) \right)
	\right\}
	d s_1
	\to 0
\end{align*}
as $ \epsilon \to + 0 $ remarking that $ \vecf \left( s_1 + \frac { \mathcal{L} } 2 \right)
= \vecf \left( s_1 - \frac { \mathcal{L} } 2 \right) $. 
\qed
\end{proof}
\par
From this lemma, a principal value integral
\[
	\lim_{ \epsilon \to + 0 }
	\int_{ \mathbb{R} / \mathcal{L} \mathbb{Z} }
	\left(
	\int_{ s_1 - \frac { \mathcal{L} } 2 + \epsilon }^{ s_1 - \epsilon }
	+
	\int_{ s_1 + \epsilon }^{ s_1 + \frac { \mathcal{L} } 2 - \epsilon }
	\right)
	\left(
	\mathscr{M}_{ \Phi , 1 } ( \vecf )
	+
	\mathscr{M}_{ \Phi , 2 } ( \vecf )
	\right)
	d s_2 d s_1
\]
converges under the conditions (A.1)--(A.4) and 
\begin{align*}
	\mathcal{E}_{ \Phi } ( \vecf )
	= & \
	\lim_{ \epsilon \to + 0 }
	\int_{ \mathbb{R} / \mathcal{L} \mathbb{Z} }
	\left(
	\int_{ s_1 - \frac { \mathcal{L} } 2 + \epsilon }^{ s_1 - \epsilon }
	+
	\int_{ s_1 + \epsilon }^{ s_1 + \frac { \mathcal{L} } 2 - \epsilon }
	\right)
	\left(
	\mathscr{M}_{ \Phi , 1 } ( \vecf )
	+
	\mathscr{M}_{ \Phi , 2 } ( \vecf )
	\right)
	d s_2 d s_1
	\\
	& \quad
	+ \,
	2 \mathcal{L} \int_{ \frac { \mathcal{L} } 2 }^\infty \frac { dx } { \Phi (x) }
\end{align*}
holds. We can consider that this is a preliminary step for a decomposed theorem, that is, this is equivalent to the cosine formula in case of the M\"{o}bius energy. 
From now on, we show that 
$ \mathscr{M}_{ \Phi , 1 } ( \vecf ) $ and $ \mathscr{M}_{ \Phi , 2 } ( \vecf ) $ belong to $ L^1 ( ( \mathbb{R} / \mathcal{L} \mathbb{Z} )^2 ) $ when we assume the condition (A.5). 
\par
Firstly we show absolute integrability of 
$ \mathscr{M}_{ \Phi , 1 } ( \vecf ) $. 
There exists a constant $ \lambda \in ( 0,1 ) $ with $ \lambda \mathrm{dist}_{ \mathbb{R} / \mathcal{L} \mathbb{Z} } ( s_1 , s_2 ) = \mathscr{D} ( \vecf ( s_1 ) , \vecf ( s_2 ) ) \leqq \| \Delta \vecf \|_{ \mathbb{R}^n } $ since  $ \vecf $ is bi-Lipschitz and we obtain 
\[
	0 \leqq
	\mathscr{M}_{ \Phi , 1 } ( \vecf )
	=
	\frac { \| \Delta \vectau \|_{ \mathbb{R}^n }^2 } { 2 \Phi ( \| \Delta \vecf \|_{ \mathbb{R}^n } ) }
	\leqq
	\frac { \| \Delta \vectau \|_{ \mathbb{R}^n }^2 } { 2 \Phi ( \lambda \mathrm{dist}_{ \mathbb{R} / \mathcal{L} \mathbb{Z} } ( s_1 , s_2 ) ) }
	\leqq
	\frac { \| \Delta \vectau \|_{ \mathbb{R}^n }^2 }
	{ 2 C( \lambda , \mathcal{L} ) \Phi ( \mathrm{dist}_{ \mathbb{R} / \mathcal{L} \mathbb{Z} } ( s_1 , s_2 ) ) ) }. 
\]
The condition (A.3) leads to $ \vecf \in W_\Phi $ and therefore we know that
$ \mathscr{M}_{ \Phi , 1 } ( \vecf ) \in L^1 ( ( \mathbb{R} / \mathcal{L} \mathbb{Z} )^2 ) $. 
The following elementary identity shows absolute integrability of $ \mathscr{M}_{ \Phi , 2 } ( \vecf ) $. 
\begin{lem}
For $ \veca $,
$ \vecb $,
$ \vecc \in \mathbb{R}^n $, it holds that
\[
	\left| ( \veca - \vecb ) \cdot \vecc \right|^2
	+
	\left\| ( \veca + \vecb ) \wedge \vecc \right\|_{ \mathbb{R}^n}^2
	=
	\| \veca - \vecb \|_{ \mathbb{R}^n }^2
	\| \vecc \|_{ \mathbb{R}^n }^2
	+
	4 \langle \veca \wedge \vecc , \vecb \wedge \vecc \rangle_{ {\bigwedge}^2 \mathbb{R}^n }. 
\]
\end{lem}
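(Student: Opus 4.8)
The plan is to reduce everything to two standard facts about the exterior product on $ \mathbb{R}^n $: the Lagrange identity
\[
	\| \vecu \wedge \vecv \|_{ \mathbb{R}^n }^2
	=
	\| \vecu \|_{ \mathbb{R}^n }^2 \| \vecv \|_{ \mathbb{R}^n }^2
	-
	( \vecu \cdot \vecv )^2
	\qquad ( \vecu , \vecv \in \mathbb{R}^n ),
\]
together with the bilinearity of $ \wedge $ and of $ \langle \cdot , \cdot \rangle_{ {\bigwedge}^2 \mathbb{R}^n } $. No special feature of dimension $ 3 $ is needed, since both the induced norm on $ {\bigwedge}^2 \mathbb{R}^n $ and the Lagrange identity above hold verbatim in $ \mathbb{R}^n $.

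First I would rewrite the scalar term on the left-hand side. Applying the Lagrange identity to the pair $ \veca - \vecb $ and $ \vecc $ gives
\[
	\left| ( \veca - \vecb ) \cdot \vecc \right|^2
	=
	\| \veca - \vecb \|_{ \mathbb{R}^n }^2 \| \vecc \|_{ \mathbb{R}^n }^2
	-
	\left\| ( \veca - \vecb ) \wedge \vecc \right\|_{ \mathbb{R}^n }^2,
\]
so that the left-hand side of the claimed identity becomes
\[
	\| \veca - \vecb \|_{ \mathbb{R}^n }^2 \| \vecc \|_{ \mathbb{R}^n }^2
	+
	\left\| ( \veca + \vecb ) \wedge \vecc \right\|_{ \mathbb{R}^n }^2
	-
	\left\| ( \veca - \vecb ) \wedge \vecc \right\|_{ \mathbb{R}^n }^2.
\]
The first summand already coincides with the first term on the right-hand side, so it only remains to identify the difference of the two wedge norms with $ 4 \langle \veca \wedge \vecc , \vecb \wedge \vecc \rangle_{ {\bigwedge}^2 \mathbb{R}^n } $.

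For this last step I would use bilinearity of the wedge product, namely $ ( \veca \pm \vecb ) \wedge \vecc = \veca \wedge \vecc \pm \vecb \wedge \vecc $, and expand each squared norm via the inner product on $ {\bigwedge}^2 \mathbb{R}^n $:
\[
	\left\| ( \veca \pm \vecb ) \wedge \vecc \right\|_{ \mathbb{R}^n }^2
	=
	\| \veca \wedge \vecc \|_{ \mathbb{R}^n }^2
	\pm
	2 \langle \veca \wedge \vecc , \vecb \wedge \vecc \rangle_{ {\bigwedge}^2 \mathbb{R}^n }
	+
	\| \vecb \wedge \vecc \|_{ \mathbb{R}^n }^2.
\]
Subtracting the $ + $ case from the $ - $ case is precisely the polarization identity and produces exactly $ 4 \langle \veca \wedge \vecc , \vecb \wedge \vecc \rangle_{ {\bigwedge}^2 \mathbb{R}^n } $, which finishes the proof.

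There is no serious obstacle here, as the statement is purely algebraic; the only points requiring mild care are to read the norm $ \| ( \veca + \vecb ) \wedge \vecc \|_{ \mathbb{R}^n } $ in the statement as the norm induced on $ {\bigwedge}^2 \mathbb{R}^n $ (so that the Lagrange identity and the polarization step refer to the same inner product), and to invoke the Lagrange identity in its $ \mathbb{R}^n $ form rather than the cross-product form special to $ n = 3 $. As an alternative that avoids even this, one may expand both sides directly into the scalars $ \veca \cdot \vecb $, $ \veca \cdot \vecc $, $ \vecb \cdot \vecc $, $ \| \veca \|_{ \mathbb{R}^n }^2 $, $ \| \vecb \|_{ \mathbb{R}^n }^2 $, $ \| \vecc \|_{ \mathbb{R}^n }^2 $ using $ \langle \vecu \wedge \vecw , \vecv \wedge \vecw \rangle_{ {\bigwedge}^2 \mathbb{R}^n } = ( \vecu \cdot \vecv ) \| \vecw \|_{ \mathbb{R}^n }^2 - ( \vecu \cdot \vecw )( \vecv \cdot \vecw ) $, and verify that both reduce to $ ( \| \veca \|_{ \mathbb{R}^n }^2 + 2 \, \veca \cdot \vecb + \| \vecb \|_{ \mathbb{R}^n }^2 ) \| \vecc \|_{ \mathbb{R}^n }^2 - 4 ( \veca \cdot \vecc )( \vecb \cdot \vecc ) $; the polarization route is shorter and conceptually cleaner.
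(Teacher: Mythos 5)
Your proof is correct. It is the same kind of elementary algebraic verification as the paper's, resting on the Lagrange identity $\| \vecu \wedge \vecv \|^2 = \| \vecu \|_{ \mathbb{R}^n }^2 \| \vecv \|_{ \mathbb{R}^n }^2 - ( \vecu \cdot \vecv )^2$ (equivalently, the Gram determinant) and on the formula $\langle \vecu \wedge \vecw , \vecv \wedge \vecw \rangle_{ {\bigwedge}^2 \mathbb{R}^n } = ( \vecu \cdot \vecv ) \| \vecw \|_{ \mathbb{R}^n }^2 - ( \vecu \cdot \vecw ) ( \vecv \cdot \vecw )$, but it is organized differently. The paper applies the Gram-determinant expansion to $\| ( \veca + \vecb ) \wedge \vecc \|^2$, does the polarization entirely at the level of scalars (using $| ( \veca - \vecb ) \cdot \vecc |^2 - | ( \veca + \vecb ) \cdot \vecc |^2 = - 4 ( \veca \cdot \vecc )( \vecb \cdot \vecc )$ and $\| \veca + \vecb \|^2 = \| \veca - \vecb \|^2 + 4\, \veca \cdot \vecb$), and only at the very end reassembles a $2 \times 2$ determinant into $\langle \veca \wedge \vecc , \vecb \wedge \vecc \rangle$; this is essentially your ``alternative'' route of expanding both sides into the six scalar invariants. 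Your main route instead applies Lagrange to the pair $( \veca - \vecb , \vecc )$ and then polarizes once in ${\bigwedge}^2 \mathbb{R}^n$ via
\[
	\left\| ( \veca + \vecb ) \wedge \vecc \right\|^2
	-
	\left\| ( \veca - \vecb ) \wedge \vecc \right\|^2
	=
	4 \langle \veca \wedge \vecc , \vecb \wedge \vecc \rangle_{ {\bigwedge}^2 \mathbb{R}^n } ,
\]
which is shorter and makes the structural reason for the factor $4$ transparent; the paper's version has the minor advantage of never needing the inner product on ${\bigwedge}^2 \mathbb{R}^n$ until the final line, since everything in between is a computation with dot products. Your cautionary remarks (reading the norm of $( \veca + \vecb ) \wedge \vecc$ as the induced norm on ${\bigwedge}^2 \mathbb{R}^n$, and using the $n$-dimensional Lagrange identity rather than the $n=3$ cross-product form) are both apt and consistent with how the paper uses the lemma.
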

\begin{proof}
Calculating straightforward, we obtain the conclusion
\begin{align*}
	&
	\left| ( \veca - \vecb ) \cdot \vecc \right|^2
	+
	\left\| ( \veca + \vecb ) \wedge \vecc \right\|_{ \mathbb{R}^n}^2
	\\
	& \quad
	=
	\left| ( \veca - \vecb ) \cdot \vecc \right|^2
	+
	\det \left(
	\begin{array}{cc}
	\| \veca + \vecb \|_{ \mathbb{R}^n }^2 &
	( \veca + \vecb ) \cdot \vecc \\
	( \veca + \vecb ) \cdot \vecc &
	\| \vecc \|_{ \mathbb{R}^n }
	\end{array}
	\right)
	\\
	& \quad
	=
	\left| ( \veca - \vecb ) \cdot \vecc \right|^2
	+
	\| \veca + \vecb \|_{ \mathbb{R}^n }^2
	\| \vecc \|_{ \mathbb{R}^n }^2
	-
	\left| ( \veca + \vecb ) \cdot \vecc \right|^2
	\\
	& \quad
	=
	\| \veca + \vecb \|_{ \mathbb{R}^n }^2
	\| \vecc \|_{ \mathbb{R}^n }^2
	-
	4
	( \veca \cdot \vecc )
	( \vecb \cdot \vecc )
	\\
	& \quad
	=
	\| \veca - \vecb \|_{ \mathbb{R}^n }^2
	\| \vecc \|_{ \mathbb{R}^n }^2
	+
	4 ( \veca \cdot \vecb )
	\| \vecc \|_{ \mathbb{R}^n }^2
	-
	4
	( \veca \cdot \vecc )
	( \vecb \cdot \vecc )
	\\
	& \quad
	=
	\| \veca - \vecb \|_{ \mathbb{R}^n }^2
	\| \vecc \|_{ \mathbb{R}^n }^2
	+
	4 \det
	\left(
	\begin{array}{cc}
	\veca \cdot \vecb &  \veca \cdot \vecc \\
	\vecc \cdot \vecb & \| \vecc \|_{ \mathbb{R}^n }^2
	\end{array}
	\right)
	\\
	& \quad
	=
	\| \veca - \vecb \|_{ \mathbb{R}^n }^2
	\| \vecc \|_{ \mathbb{R}^n }^2
	+
	4 \langle \veca \wedge \vecc , \vecb \wedge \vecc \rangle_{ {\bigwedge}^2 \mathbb{R}^n }. 
\end{align*}
\qed
\end{proof}
\par
Put
\[
	\veca = \vectau ( s_1 ) ,
	\quad
	\vecb = \vectau ( s_2 ) ,
	\quad
	\vecc = \frac { \Delta \vecf } { \| \Delta \vecf \|_{ \mathbb{R}^n } }. 
\]
An easy calculation 
\begin{align*}
	&
	\left|
	\Delta \vectau \cdot \frac { \Delta \vecf } { \| \Delta \vecf \|_{ \mathbb{R}^n } }
	\right|^2
	+
	\left\|
	( \vectau ( s_1 ) + \vectau ( s_2 ) ) \wedge
	\frac { \Delta \vecf } { \| \Delta \vecf \|_{ \mathbb{R}^n } }
	\right\|_{ \mathbb{R}^n }^2
	\\
	& \quad
	=
	\| \Delta \vectau \|_{ \mathbb{R}^n }^2
	+
	4 \left\langle
	\vectau ( s_1 ) \wedge
	\frac { \Delta \vecf } { \| \Delta \vecf \|_{ \mathbb{R}^n } }
	,
	\vectau ( s_2 ) \wedge
	\frac { \Delta \vecf } { \| \Delta \vecf \|_{ \mathbb{R}^n } }
	\right\rangle_{ { \bigwedge }^2 \mathbb{R}^n }
\end{align*}
leads to 
\begin{align}
	\label{M1M2}
	&
	\mathscr{M}_{ \Phi , 1 } ( \vecf )
	+
	\mathscr{M}_{ \Phi , 2 } ( \vecf )
	\\
	\nonumber
	& \quad
	=
	\frac 14 \left( \frac 1 { \Phi (x) } - \Lambda (x) \right)
	\left\{
	\left|
	\Delta \vectau \cdot \frac { \Delta \vecf } { \| \Delta \vecf \|_{ \mathbb{R}^n } }
	\right|^2
	+
	\left\|
	( \vectau ( s_1 ) + \vectau ( s_2 ) ) \wedge
	\frac { \Delta \vecf } { \| \Delta \vecf \|_{ \mathbb{R}^n } }
	\right\|_{ \mathbb{R}^n }^2
	\right\}
	\\
	\nonumber
	& \quad \qquad
	+ \,
	\frac 14 \left( \frac 1 { \Phi (x) } + \Lambda (x) \right)
	\| \Delta \vectau \|_{ \mathbb{R}^n }^2. 
\end{align}
From the fact that $ \Phi (x) > 0 $ and $ \Lambda (x) < 0 $, the right hand side is a non-negative function if (b) of (A.5) holds. 
Accordingly $ \mathscr{M}_{ \Phi , 1 } ( \vecf ) + \mathscr{M}_{ \Phi , 2 } ( \vecf ) $ is absolutely integrable from the integrability in the sense of principal value. 
Hence, so is $ \mathscr{M}_{ \Phi , 2 } ( \vecf ) = ( \mathscr{M}_{ \Phi , 1 } ( \vecf ) + \mathscr{M}_{ \Phi , 2 } ( \vecf ) ) - \mathscr{M}_{ \Phi , 1 } ( \vecf ) $ and we prove theorem \ref{Main Theorem}. 
\qed
\begin{eg}
When
$ \Phi ( x ) = x^\alpha $ {\rm (}$ \alpha \in [ 2 , 3 ) ${\rm )}, 
it follows that
\begin{align*}
	\mathcal{E}_{ x^\alpha } ( \vecf )
	= & \
	\mathcal{E}_{ x^\alpha , 1 } ( \vecf )
	+
	\mathcal{E}_{ x^\alpha , 2 } ( \vecf )
	+
	\frac { 2^\alpha } { ( \alpha - 1 ) \mathcal{L}^{ \alpha - 2 } }
	,
	\\
	\mathcal{E}_{ x^\alpha , 1 } ( \vecf )
	= & \
	\iint_{ ( \mathbb{R} / \mathcal{L} \mathbb{Z} )^2 }
	\frac { \| \Delta \vectau \|_{ \mathbb{R}^n }^2 } { 2 \| \Delta \vecf \|_{ \mathbb{R}^n }^\alpha }
	\,
	d s_1 d s_2
	,
	\\
	\mathcal{E}_{ x^\alpha , 2 } ( \vecf )
	= & \
	\iint_{ ( \mathbb{R} / \mathcal{L} \mathbb{Z} )^2 }
	\frac \alpha { ( \alpha - 1 ) \| \Delta \vecf \|_{ \mathbb{R}^n }^\alpha }
	\left\langle
	\vectau ( s_1 ) \wedge \frac { \Delta \vecf } { \| \Delta \vecf \|_{ \mathbb{R}^n } }
	,
	\vectau ( s_2 ) \wedge \frac { \Delta \vecf } { \| \Delta \vecf \|_{ \mathbb{R}^n } }
	\right\rangle_{ {\bigwedge}^2 \mathbb{R}^n }
	d s_1 d s_2. 
\end{align*}
In particular, the case of  $ \alpha = 2 $ corresponds to the decomposition described in {\rm \S~1}. 
\end{eg}
\begin{rem}
When $ \Phi ( x ) = x^\alpha $ {\rm (}$ \alpha \in [ 2 , 3 ) ${\rm )}, a constant $ \displaystyle{ \frac { 2^\alpha } { ( \alpha - 1 ) \mathcal{L}^{ \alpha - 2 } } } $ is not energy value of right circles except the case of $ \alpha = 2 $. 
However, we can calculate energy value of right circles using decomposition theorem. 
Let $ \vecf $ be a right circle. It follows that
\[
	\left|
	\Delta \vectau \cdot \frac { \Delta \vecf } { \| \Delta \vecf \|_{ \mathbb{R}^n } }
	\right|^2
	+
	\left\|
	( \vectau ( s_1 ) + \vectau ( s_2 ) ) \wedge
	\frac { \Delta \vecf } { \| \Delta \vecf \|_{ \mathbb{R}^n } }
	\right\|_{ \mathbb{R}^n }^2
	\equiv 0. 
\]
If $ \Phi (x) = x^\alpha $, 
a relation
$ \displaystyle{ \frac 1 { \Phi (x) } + \Lambda (x) = \frac { \alpha - 2 } { ( \alpha - 1 ) x^\alpha } } $ leads to 
\begin{align*}
	\mathcal{E}_{ x^\alpha } ( \vecf )
	= & \
	\iint_{ ( \mathbb{R} / \mathcal{L} \mathbb{Z} )^2 }
	\left(
	\mathscr{M}_{ x^\alpha , 1 } ( \vecf )
	+
	\mathscr{M}_{ x^\alpha , 2 } ( \vecf )
	\right)
	d s_1 d s_2
	+
	\frac 2 { ( \alpha - 1 ) \mathcal{L}^{ \alpha - 2 } }
	\\
	= & \
	\frac { \alpha - 2 } { 2 ( \alpha - 1 ) }
	\mathcal{E}_{ x^\alpha , 1 } ( \vecf )
	+
	\frac 2 { ( \alpha - 1 ) \mathcal{L}^{ \alpha - 2 } }
\end{align*}
from \pref{M1M2}. 
It holds that 
\[
	\| \Delta \vecf \|_{ \mathbb{R}^n }^2
	=
	\frac { \mathcal{L}^2 } { \pi^2 }
	\sin^2 \frac \pi { \mathcal{L} } \Delta s
	,
	\quad
	\| \Delta \vectau \|_{ \mathbb{R}^n }^2
	=
	4
	\sin^2 \frac \pi { \mathcal{L} } \Delta s
\]
for a right circle with total length $ \mathcal{L} $. 
We obtain 
\begin{align*}
	\mathcal{E}_{ x^\alpha , 1 } ( \vecf )
	= & \
	\frac { 2 \pi^\alpha } { \mathcal{L}^\alpha }
	\int_0^{ \mathcal{L} }
	\int_{ - \frac { \mathcal{L} } 2 }^{ \frac { \mathcal{L} } 2 }
	\left\{ \sin^2 \frac \pi { \mathcal{L} } ( s + h ) \right\}^{ 1 - \frac \alpha 2 }
	d h d s
	\\
	= & \
	\frac { 4 \pi^\alpha } { \mathcal{L}^{ \alpha - 1 }}
	\int_0^{ \frac { \mathcal{L} } 2 }
	\sin^{ 2 - \alpha } \frac \pi { \mathcal{L} } s
	\, d s
	\\
	= & \
	\frac { 4 \pi^{ \alpha - 1 } } { \mathcal{L}^{ \alpha - 2 } }
	\int_0^{ \frac \pi 2 }
	\sin^{ 2 - \alpha } \theta \, d \theta
	\\
	= & \
	\frac { 2 \pi^{ \alpha - \frac 12 } } { \mathcal{L}^{ \alpha - 2 } }
	\frac
	{ {\mit \Gamma} \left( \frac { 3 - \alpha } 2 \right) }
	{ {\mit \Gamma} \left( \frac { 4 - \alpha } 2 \right) }
\end{align*}
using
\[
	\int_0^{ \frac \pi 2 }
	\sin^{ 2x - 1 } \theta \cos^{ 2y - 1 } \theta \, d \theta
	=
	\frac 12 \frac { {\mit \Gamma} ( x ) {\mit \Gamma} (y) }
	{ {\mit \Gamma} ( x + y ) }. 
\]
Consequently, we have
\[
	\mathcal{E}_{ x^\alpha } ( \vecf )
	=
	\frac 1 { ( \alpha - 1 ) \mathcal{L}^{ \alpha - 2 } }
	\left\{
	\frac { ( \alpha - 2 ) \pi^{ \alpha - \frac 12 } {\mit \Gamma} \left( \frac { 3 - \alpha } 2 \right) }
	{ {\mit \Gamma} \left( \frac { 4 - \alpha } 2 \right) }
	+
	2^\alpha
	\right\}. 
\]
This value also follows from the result of Brylinski \cite{Brylinski}. 
Note that global minimizers of $ \Phi_{ x^\alpha } $ with fixed total length are right circles,
which is shown by Abrams-Cantarella-Fu-Ghomi-Howard \cite{ACFGH}.
\end{rem}
\section{Variational formulae}
\label{Variational formulae}
\par
In this section, we derive variational formulae of the decomposed energies. 
Setting
\begin{align*}
	Q_{1,i} \vecv
	= & \
	\Delta \vecv^\prime
	= \vecv_1 - \vecv_2 ,
	\\
	Q_{2,i } \vecv
	= & \
	( -1 )^{ i-1 }
	2 \left\{ \vecv_i^\prime - ( R_1 \vecf \cdot \vectau_i ) R_1 \vecv \right\}
	,
	\\
	R_1 \vecv
	= & \
	\frac { | \Delta s | \Delta \vecv } { \| \Delta \vecf \|_{ \mathbb{R}^n } \Delta s }
	,
	\\
	R_2 \vecv
	= & \
	\frac 12 \left( \vecv_1^\prime + \vecv_2^\prime \right)
	,
	\\
	\Phi_1 (x) = & \
	2 \Phi (x) ,
	\\
	\Phi_2 (x) = & \
	- 4 \left( \frac 1 { \Phi (x) } - \Lambda (x) \right)^{-1}, 
\end{align*}
we rewrite $ \mathscr{M}_{ \Phi , j } ( \vecf ) $ using them as
\[
	\mathscr{M}_{ \Phi , j } ( \vecf )
	=
	\frac { Q_{ j,1 } ( \vecf ) \cdot Q_{ j,2 } ( \vecf ) } { \Phi_j ( \| \Delta \vecf \|_{ \mathbb{R}^n } ) }. 
\]
Furthermore we put
\begin{align*}
	S_{1,i} ( \vecv , \vecw )
	= & \
	R_2 \vecv \cdot Q_{1,i} \vecw
	+
	Q_{1,i} \vecv \cdot R_2 \vecw
	,
	\\
	S_{2,i} ( \vecv , \vecw )
	= & \
	R_1 \vecv \cdot Q_{2,i} \vecw
	+
	Q_{2,i} \vecv \cdot R_1 \vecw. 
\end{align*}
We assume that $ \Phi_j $ is differentiable and put
\[
	\Xi_i (x)
	=
	\frac { x \Phi_j^\prime (x) } { \Phi_j (x) }. 
\]
We denote $ \mathscr{G}_{ \Phi , j } $ and $ \mathscr{H}_{ \Phi , j } $ as integrand of the first and second variational formulae of $ \mathscr{M}_{ \Phi , j } $ respectively, that is, 
\begin{align*}
	\mathscr{G}_{ \Phi , j } ( \vecf ) [ \vecphi ] \, d s_1 d s_2
	= & \
	\delta ( \mathscr{M}_{ \Phi , j } ( \vecf ) \, d s_1 d s_2 ) [ \vecphi ]
	\\
	\mathscr{H}_{ \Phi , j } ( \vecf ) [ \vecphi ] \, d s_1 d s_2
	= & \
	\delta^2 ( \mathscr{M}_{ \Phi , j } ( \vecf ) \, d s_1 d s_2 ) [ \vecphi , \vecpsi ]. 
\end{align*}
\begin{thm}
For any $ \mathcal{L} > 0 $, we suppose the conditions 
{\rm (A.1)--(A.5)}. 
If $ \Phi_j \in C^1 ( 0 , \infty ) $ it holds that
\[
	\mathscr{G}_{ \Phi , j } ( \vecf ) [ \vecphi ]
	=
	\frac{
	Q_{ j,1 } \vecf \cdot Q_{ j,2 } \vecphi + Q_{ j,2 } \vecf \cdot Q_{ j,1 } \vecphi
	}
	{ \Phi_j ( \| \Delta \vecf \|_{ \mathbb{R}^n } ) }
	-
	\frac { \mathscr{M}_{ \Phi , j } ( \vecf )
	\Phi_j^\prime ( \| \Delta \vecf \|_{ \mathbb{R}^n } ) \Delta \vecf \cdot \Delta \vecphi }
	{ \| \Delta \vecf \|_{ \mathbb{R}^n } \Phi_j ( \| \Delta \vecf \|_{ \mathbb{R}^n } ) }. 
\]
If $ \Phi_j \in C^2 ( 0 , + \infty ) $, It holds that 
\begin{align*}
	&
	\mathscr{H}_{ \Phi , j } ( \vecf ) [ \vecphi , \vecpsi ]
	\\
	& \quad
	=
	\frac {
	Q_{ j,1 } \vecphi \cdot Q_{j,2} \vecpsi
	+
	Q_{ j,2 } \vecphi \cdot Q_{j,1} \vecpsi
	-
	\left(
	S_{ j,1 } ( \vecf , \vecphi ) S_{ j,2 } ( \vecf , \vecpsi )
	+
	S_{ j,2 } ( \vecf , \vecphi ) S_{ j,1 } ( \vecf , \vecpsi )
	\right)
	}
	{ \Phi_j ( \| \Delta \vecf \|_{ \mathbb{R}^n } ) }
	\\
	& \quad \quad
	- \,
	\mathscr{G}_{ \Phi , j } ( \vecf ) [ \vecphi ]
	\Xi_j ( \| \Delta \vecf \|_{ \mathbb{R}^n } )
	\frac { \Delta \vecf \cdot \Delta \vecpsi } { \| \Delta \vecf \|_{ \mathbb{R}^n }^2 }
	-
	\mathscr{G}_{ \Phi , j } ( \vecf ) [ \vecpsi ]
	\Xi_j ( \| \Delta \vecf \|_{ \mathbb{R}^n } )
	\frac { \Delta \vecf \cdot \Delta \vecphi } { \| \Delta \vecf \|_{ \mathbb{R}^n }^2 }
	\\
	& \quad \quad
	- \,
	\mathscr{M}_{ \Phi , j } ( \vecf )
	\| \Delta \vecf \|_{ \mathbb{R}^n } \Xi_j^\prime ( \| \Delta \vecf \|_{ \mathbb{R}^n } )
	\frac { ( \Delta \vecf \cdot \Delta \vecphi ) ( \Delta \vecf \cdot \Delta \vecpsi ) } { \| \Delta \vecf \|_{ \mathbb{R}^n }^4 }
	\\
	& \quad \quad
	- \,
	\mathscr{M}_{ \Phi , j } ( \vecf ) \Xi_j ( \| \Delta \vecf \|_{ \mathbb{R}^n } )
	\frac { \Delta \vecphi \cdot \Delta\vecpsi } { \| \Delta \vecf \|_{ \mathbb{R}^n }^2 }
	\\
	& \quad \quad
	+ \,
	\mathscr{M}_{ \Phi , j } ( \vecf )
	\left(
	2 \Xi_j ( \| \Delta \vecf \|_{ \mathbb{R}^n } )
	- \Xi_j ( \| \Delta \vecf \|_{ \mathbb{R}^n } )^2
	- \| \Delta \vecf \|_{ \mathbb{R}^n } \Xi_j^\prime ( \| \Delta \vecf \|_{ \mathbb{R}^n } )
	\right)
	\frac { ( \Delta \vecf \cdot \Delta \vecphi ) ( \Delta \vecf \cdot \Delta \vecpsi ) }
	{ \| \Delta \vecf \|_{ \mathbb{R}^n }^4 }. 
\end{align*}
\end{thm}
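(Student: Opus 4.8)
The plan is to handle both $j=1$ and $j=2$ at once through the uniform representation $\mathscr{M}_{\Phi,j}(\vecf)=\frac{Q_{j,1}\vecf\cdot Q_{j,2}\vecf}{\Phi_j(\|\Delta\vecf\|_{\mathbb{R}^n})}$ and to differentiate the weighted density $\mathscr{M}_{\Phi,j}\,ds_1\,ds_2$ along $\vecf\mapsto\vecf+\epsilon\vecphi$ by the Leibniz and quotient rules, reducing the computation to the variations of a few building blocks: the chord length $\rho=\|\Delta\vecf\|_{\mathbb{R}^n}$, the two line elements $ds_i$, the unit tangents $\vectau_i$, and the numerator vectors $Q_{j,i}\vecf$. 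The decisive point is that the variation must be taken \emph{geometrically}, with $\vectau_i=\vecf^\prime(s_i)/\|\vecf^\prime(s_i)\|_{\mathbb{R}^n}$ and $ds_i$ the true arc-length element, so that at a base curve parametrised by arc length one has $\delta\vectau_i=\vecphi_i^\prime-(\vectau_i\cdot\vecphi_i^\prime)\vectau_i$, $\delta(ds_i)=(\vectau_i\cdot\vecphi_i^\prime)\,ds_i$, and $\delta\rho=(\Delta\vecf\cdot\Delta\vecphi)/\rho$. A purely formal variation that ignores the line element and the normalisation of $\vectau$ gives the wrong answer, namely it misses the $S_{j,i}$ contributions; tracking these two effects is therefore built into the method from the start.

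For the first variation the quotient rule immediately yields the denominator term $-\mathscr{M}_{\Phi,j}\Phi_j^\prime(\rho)(\Delta\vecf\cdot\Delta\vecphi)/(\rho\,\Phi_j(\rho))$. The remaining task is to show that the numerator variation, \emph{together with} the line-element variation $\delta(ds_1\,ds_2)$, collapses to $\big(Q_{j,1}\vecf\cdot Q_{j,2}\vecphi+Q_{j,2}\vecf\cdot Q_{j,1}\vecphi\big)/\Phi_j(\rho)$. For $j=1$ this rests on the identities $\Delta\vectau\cdot\vectau_1=\tfrac12\|\Delta\vectau\|_{\mathbb{R}^n}^2=-\,\Delta\vectau\cdot\vectau_2$, which make the tangential part $(\vectau_i\cdot\vecphi_i^\prime)\vectau_i$ of $\delta\vectau_i$ cancel exactly against the measure contribution $\mathscr{M}_{\Phi,1}(\vectau_1\cdot\vecphi_1^\prime+\vectau_2\cdot\vecphi_2^\prime)$, leaving only $Q_{1,i}\vecphi=\Delta\vecphi^\prime$. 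For $j=2$ the operators $Q_{2,i}$ and $R_1$ carry their own $\vecf$-dependence; their variation produces correction vectors, but every such correction is parallel to $R_1\vecf$, whereas $Q_{2,i}\vecf=(-1)^{i-1}2\{\vectau_i-(R_1\vecf\cdot\vectau_i)R_1\vecf\}$ is orthogonal to $R_1\vecf$. Hence each correction is annihilated by the opposite factor of the inner product and only the clean term $Q_{2,i}\vecphi$ survives. This establishes $\mathscr{G}_{\Phi,j}$, which I would record as an identity valid at every bi-Lipschitz $\vecf$ (with $\vectau=\vecf^\prime/\|\vecf^\prime\|_{\mathbb{R}^n}$), so that it can be varied a second time.

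The second variation I would obtain by varying $\mathscr{G}_{\Phi,j}[\vecphi]$ in the direction $\vecpsi$ and again adding the line-element contribution. Writing $\Xi_j(x)=x\Phi_j^\prime(x)/\Phi_j(x)$, the scalar variations — of the denominator $\Phi_j(\rho)$, of $\Xi_j(\rho)$, of the explicit factor $(\Delta\vecf\cdot\Delta\vecphi)/\rho^2$, and of $\mathscr{M}_{\Phi,j}$ itself (which returns $\mathscr{G}_{\Phi,j}[\vecpsi]$) — recombine, after inserting $\delta_\psi\rho=(\Delta\vecf\cdot\Delta\vecpsi)/\rho$, into the two terms $-\mathscr{G}_{\Phi,j}[\vecphi]\Xi_j(\rho)(\Delta\vecf\cdot\Delta\vecpsi)/\rho^2$ and $-\mathscr{G}_{\Phi,j}[\vecpsi]\Xi_j(\rho)(\Delta\vecf\cdot\Delta\vecphi)/\rho^2$ and into the remaining lines carrying $\rho\Xi_j^\prime$, $\Delta\vecphi\cdot\Delta\vecpsi$, and the coefficient $2\Xi_j-\Xi_j^2-\rho\Xi_j^\prime$. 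The genuinely vectorial part is $\delta_\psi\big(Q_{j,1}\vecf\cdot Q_{j,2}\vecphi+Q_{j,2}\vecf\cdot Q_{j,1}\vecphi\big)$: differentiating the two slots $Q_{j,i}\vecf$ gives the clean symmetric term $Q_{j,1}\vecphi\cdot Q_{j,2}\vecpsi+Q_{j,2}\vecphi\cdot Q_{j,1}\vecpsi$, while the second-order geometric corrections (for $j=1$ from the normalisation of $\vectau_i$ and the line element, for $j=2$ from the $\vecf$-dependence of $Q_{2,i}$ and $R_1$) contribute exactly $-\big(S_{j,1}(\vecf,\vecphi)S_{j,2}(\vecf,\vecpsi)+S_{j,2}(\vecf,\vecphi)S_{j,1}(\vecf,\vecpsi)\big)$. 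Here one uses that $S_{1,i}(\vecf,\vecphi)=\delta_\phi(R_2\vecf\cdot Q_{1,i}\vecf)=\vectau_1\cdot\vecphi_1^\prime-\vectau_2\cdot\vecphi_2^\prime$ is the difference of the speed variations at the two points, and that for $j=2$ the quantity $S_{2,i}(\vecf,\vecphi)$ captures precisely the $R_1\vecf$-component of the operator correction that the first variation discarded by orthogonality.

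The hard part will be this last identification. One must show that all second-order cross terms coming from the curve-dependence of the data — depending on $j$, the variation of the scalar projection $R_1\vecf\cdot\vectau_i$, of the normalisation buried in $R_1$, and of $\vectau_i$ and $ds_i$ — reorganise, after repeated use of the orthogonality $Q_{2,i}\vecf\perp R_1\vecf$ and of the identities $\Delta\vectau\cdot\vectau_i=\pm\tfrac12\|\Delta\vectau\|_{\mathbb{R}^n}^2$, into the symmetric $S_{j,i}$-products and into nothing else. The bookkeeping is heavy because numerous terms of the shape (scalar variation)$\times$(vector) appear; the mechanism that controls them is that orthogonality to $R_1\vecf$ forces the unwanted vectorial pieces to drop out, while the surviving scalar pieces assemble exactly into the $S_{j,i}$. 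Once this collapse is verified and the coefficients of the $(\Delta\vecf\cdot\Delta\vecphi)(\Delta\vecf\cdot\Delta\vecpsi)/\rho^4$ and $(\Delta\vecphi\cdot\Delta\vecpsi)/\rho^2$ terms are gathered, the stated formula for $\mathscr{H}_{\Phi,j}$ follows uniformly in $j$.
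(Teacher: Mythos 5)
Your proposal follows essentially the same route as the paper: both write $\mathscr{M}_{\Phi,j}=Q_{j,1}\vecf\cdot Q_{j,2}\vecf/\Phi_j(\|\Delta\vecf\|_{\mathbb{R}^n})$, vary the weighted density $\mathscr{M}_{\Phi,j}\,ds_1ds_2$ (so that the line-element variation $(\vectau_1\cdot\vecpsi_1^\prime+\vectau_2\cdot\vecpsi_2^\prime)$ is absorbed), extract the $\Xi_j$-terms from the quotient rule on $\Phi_j$, and attribute the $S_{j,i}$-products to the curve-dependence of the operators $Q_{j,i}$, $R_1$, $R_2$ — which is exactly the scheme the paper imports from the M\"obius-energy case. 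Your identities $\Delta\vectau\cdot\vectau_1=\tfrac12\|\Delta\vectau\|_{\mathbb{R}^n}^2=-\Delta\vectau\cdot\vectau_2$ and $Q_{2,i}\vecf\perp R_1\vecf$ are correct and are the right mechanism for the collapse of the numerator variation; the remaining bookkeeping you defer is likewise left implicit in the paper, which cites its earlier work for those operator-level variational identities.
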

\begin{proof}
Here we use the same way for proving this theorem as for the M\"{o}bius energy in \cite{IshizekiNagasawaII}.
From
\begin{align*}
	& \quad
	\mathscr{G}_{ \Phi , j } ( \vecf ) [ \vecphi ] \, d s_1 d s_2
	\\
	& \quad
	=
	\delta ( \mathscr{M}_{ \Phi , j } ( \vecf ) \, d s_1 d s_2 ) [ \vecphi ]
	\\
	& \quad
	=
	\delta \left( \frac { Q_{ j,1 } ( \vecf ) \cdot Q_{ j,2 } ( \vecf ) } { \Phi_j ( \| \Delta \vecf \|_{ \mathbb{R}^n } ) } \, d s_1 d s_2 \right) [ \vecphi ]
	\\
	& \quad
	=
	\left\{
	\frac { \delta ( Q_{ j,1 } \vecf \cdot Q_{ j,2 } \vecf ) [ \vecphi ] }
	{ \Phi_j ( \| \Delta \vecf \|_{ \mathbb{R}^n } ) }
	+
	( Q_{ j,1 } \vecf \cdot Q_{ j,2 } \vecf )
	\delta \left( \frac 1 { \Phi_j ( \| \Delta \vecf \|_{ \mathbb{R}^n } ) } \right) [ \vecphi ]
	\right\}
	d s_1 d s_2
	\\
	& \qquad \quad
	+ \,
	\frac { Q_{ j,1 } ( \vecf ) \cdot Q_{ j,2 } ( \vecf ) }
	{ \Phi_j ( \| \Delta \vecf \|_{ \mathbb{R}^n } ) }
	\delta ( d s_1 d s_2 ) [ \vecphi ]
	\\
	& \quad
	=
	\left\{
	\frac{
	Q_{ j,1 } \vecf \cdot Q_{ j,2 } \vecphi + Q_{ j,2 } \vecf \cdot Q_{ j,1 } \vecphi
	}
	{ \Phi_j ( \| \Delta \vecf \|_{ \mathbb{R}^n } ) }
	-
	\frac { ( Q_{ j,1 } \vecf \cdot Q_{ j,2 } \vecf )
	\Phi_j^\prime ( \| \Delta \vecf \|_{ \mathbb{R}^n } ) \Delta \vecf \cdot \Delta \vecphi }
	{ \| \Delta \vecf \|_{ \mathbb{R}^n } \Phi_j ( \| \Delta \vecf \|_{ \mathbb{R}^n } )^2 }
	\right\}
	d s_1 d s_2, 
\end{align*}
it holds that
\[
	\mathscr{G}_{ \Phi , j } ( \vecf ) [ \vecphi ]
	=
	\frac{
	Q_{ j,1 } \vecf \cdot Q_{ j,2 } \vecphi + Q_{ j,2 } \vecf \cdot Q_{ j,1 } \vecphi
	}
	{ \Phi_j ( \| \Delta \vecf \|_{ \mathbb{R}^n } ) }
	-
	\frac { \mathscr{M}_{ \Phi , j } ( \vecf )
	\Phi_j^\prime ( \| \Delta \vecf \|_{ \mathbb{R}^n } ) \Delta \vecf \cdot \Delta \vecphi }
	{ \| \Delta \vecf \|_{ \mathbb{R}^n } \Phi_j ( \| \Delta \vecf \|_{ \mathbb{R}^n } ) }. 
\]
We put 
\[
	\mathscr{P}_j ( \vecf ) [ \vecphi ]
	=
	Q_{ j,1 } \vecf \cdot Q_{ j,2 } \vecphi + Q_{ j,2 } \vecf \cdot Q_{ j,1 } \vecphi, 
\]
then we obtain
\[
	\mathscr{G}_{ \Phi , j } ( \vecf ) [ \vecphi ]
	=
	\frac{
	\mathscr{P}_j ( \vecf ) [ \vecphi ]
	}
	{ \Phi_j ( \| \Delta \vecf \|_{ \mathbb{R}^n } ) }
	-
	\frac { \mathscr{M}_{ \Phi , j } ( \vecf ) \Xi_j ( \| \Delta \vecf \|_{ \mathbb{R}^n } )
	 \Delta \vecf \cdot \Delta \vecphi }
	{ \| \Delta \vecf \|_{ \mathbb{R}^n }^2 }
\]
using $ \mathscr{P}_j $ and $ \Xi_j $. 
An calculation
\begin{align*}
	&
	\delta \left( \mathscr{G}_{ \Phi , j } ( \vecf ) [ \vecphi ] \right) [ \vecpsi ]
	\\
	& \quad
	=
	\frac { \delta \left( \mathscr{P}_j ( \vecf ) [ \vecphi ] \right) [ \vecpsi ] }
	{ \Phi_j ( \| \Delta \vecf \|_{ \mathbb{R}^n } ) }
	+
	\mathscr{P}_j ( \vecf ) [ \vecphi ]
	\delta \left( \frac 1 { \Phi_j ( \| \Delta \vecf \|_{ \mathbb{R}^n } ) } \right) [ \vecpsi ]
	\\
	& \quad \quad
	- \,
	\left[
	\left\{ \delta \left( \mathscr{M}_{ \Phi , j } ( \vecf ) [ \vecphi ] \right) [ \vecpsi ]
	\right\}
	\Xi_j ( \| \Delta \vecf \|_{ \mathbb{R}^n } )
	+
	\mathscr{M}_{ \Phi , j } ( \vecf )
	\left\{ \delta \left( \Xi_j ( \| \Delta \vecf \|_{ \mathbb{R}^n } ) \right) [ \vecpsi ] \right\}
	\right]
	\frac { \Delta \vecf \cdot \Delta \vecphi } { \| \Delta \vecf \|_{ \mathbb{R}^n }^2 }
	\\
	& \quad \quad
	- \,
	\mathscr{M}_{ \Phi , j } ( \vecf ) \Xi_j ( \| \Delta \vecf \|_{ \mathbb{R}^n } )
	\delta \left( \frac { \Delta \vecf \cdot \Delta \vecphi } { \| \Delta \vecf \|_{ \mathbb{R}^n }^2 } \right) [ \vecpsi ]
	\\
	& \quad
	=
	\frac 1
	{ \Phi_j ( \| \Delta \vecf \|_{ \mathbb{R}^n } ) }
	\left\{
	Q_{ j,1 } \vecphi \cdot Q_{j,2} \vecpsi
	+
	Q_{ j,2 } \vecphi \cdot Q_{j,1} \vecpsi
	\right.
	\\
	& \qquad \qquad
	\left.
	- \,
	\mathscr{P}_j ( \vecf ) [ \vecphi ]
	\left( \vectau_1 \cdot \vecpsi_1^\prime + \vectau_2 \cdot \vecpsi_2^\prime \right)
	-
	\left(
	S_{ j,1 } ( \vecf , \vecphi ) S_{ j,2 } ( \vecf , \vecpsi )
	+
	S_{ j,2 } ( \vecf , \vecphi ) S_{ j,1 } ( \vecf , \vecpsi )
	\right)
	\right\}
	\\
	& \quad \quad
	- \,
	\frac {
	\mathscr{P}_j ( \vecf ) [ \vecphi ]
	\Xi_j ( \| \Delta \vecf \|_{ \mathbb{R}^n } ) ( \Delta \vecf \cdot \Delta \vecpsi )
	}
	{ \| \Delta \vecf \|_{ \mathbb{R}^n }^2 \Phi_j ( \| \Delta \vecf \|_{ \mathbb{R}^n } ) }
	\\
	& \quad \quad
	- \,
	\left\{
	\mathscr{G}_{ \Phi , j } ( \vecf ) [ \vecpsi ]
	-
	\mathscr{M}_{ \Phi , j } ( \vecf )
	\left(
	\vectau_1 \cdot \vecpsi_1^\prime
	+ 
	\vectau_2 \cdot \vecpsi_2^\prime
	\right)
	\right\}
	\Xi_j ( \| \Delta \vecf \|_{ \mathbb{R}^n } )
	\frac { \Delta \vecf \cdot \Delta \vecphi } { \| \Delta \vecf \|_{ \mathbb{R}^n }^2 }
	\\
	& \quad \quad
	- \,
	\mathscr{M}_{ \Phi , j } ( \vecf )
	\frac { \Xi_j^\prime ( \| \Delta \vecf \|_{ \mathbb{R}^n } ) \Delta \vecf \cdot \Delta \vecpsi }
	{ \| \Delta \vecf \|_{ \mathbb{R}^n } }
	\frac { \Delta \vecf \cdot \Delta \vecphi } { \| \Delta \vecf \|_{ \mathbb{R}^n }^2 }
	\\
	& \quad \quad
	- \,
	\mathscr{M}_{ \Phi , j } ( \vecf ) \Xi_j ( \| \Delta \vecf \|_{ \mathbb{R}^n } )
	\left\{
	\frac { \Delta \vecphi \cdot \Delta \vecpsi } { \| \Delta \vecf \|_{ \mathbb{R}^n }^2 }
	-
	\frac { 2 ( \Delta \vecf \cdot \Delta \vecphi ) ( \Delta \vecf \cdot \Delta \vecpsi ) }
	{ \| \Delta \vecf \|_{ \mathbb{R}^n }^4 }
	\right\}
	\\
	& \quad
	=
	-
	\mathscr{G}_{ \Phi , j } ( \vecf ) [ \vecphi ]
	\left( \vectau_1 \cdot \vecpsi_1^\prime + \vectau_2 \cdot \vecpsi_2^\prime \right)
	\\
	& \quad \quad
	+ \,
	\frac {
	Q_{ j,1 } \vecphi \cdot Q_{j,2} \vecpsi
	+
	Q_{ j,2 } \vecphi \cdot Q_{j,1} \vecpsi
	-
	\left(
	S_{ j,1 } ( \vecf , \vecphi ) S_{ j,2 } ( \vecf , \vecpsi )
	+
	S_{ j,2 } ( \vecf , \vecphi ) S_{ j,1 } ( \vecf , \vecpsi )
	\right)
	}
	{ \Phi_j ( \| \Delta \vecf \|_{ \mathbb{R}^n } ) }
	\\
	& \quad \quad
	- \,
	\frac {
	\mathscr{P}_j ( \vecf ) [ \vecphi ]
	\Xi_j ( \| \Delta \vecf \|_{ \mathbb{R}^n } ) ( \Delta \vecf \cdot \Delta \vecpsi )
	}
	{ \| \Delta \vecf \|_{ \mathbb{R}^n }^2 \Phi_j ( \| \Delta \vecf \|_{ \mathbb{R}^n } ) }
	\\
	& \quad \quad
	- \,
	\mathscr{G}_{ \Phi , j } ( \vecf ) [ \vecpsi ]
	\Xi_j ( \| \Delta \vecf \|_{ \mathbb{R}^n } )
	\frac { \Delta \vecf \cdot \Delta \vecphi } { \| \Delta \vecf \|_{ \mathbb{R}^n }^2 }
	\\
	& \quad \quad
	- \,
	\mathscr{M}_{ \Phi , j } ( \vecf )
	\frac { \Xi_j^\prime ( \| \Delta \vecf \|_{ \mathbb{R}^n } ) \Delta \vecf \cdot \Delta \vecpsi }
	{ \| \Delta \vecf \|_{ \mathbb{R}^n } }
	\frac { \Delta \vecf \cdot \Delta \vecphi } { \| \Delta \vecf \|_{ \mathbb{R}^n }^2 }
	\\
	& \quad \quad
	- \,
	\mathscr{M}_{ \Phi , j } ( \vecf ) \Xi_j ( \| \Delta \vecf \|_{ \mathbb{R}^n } )
	\left\{
	\frac { \Delta \vecphi \cdot \Delta \vecpsi } { \| \Delta \vecf \|_{ \mathbb{R}^n }^2 }
	-
	\frac { 2 ( \Delta \vecf \cdot \Delta \vecphi ) ( \Delta \vecf \cdot \Delta \vecpsi ) }
	{ \| \Delta \vecf \|_{ \mathbb{R}^n }^4 }
	\right\}
	\\
	& \quad
	=
	-
	\mathscr{G}_{ \Phi , j } ( \vecf ) [ \vecphi ]
	\left( \vectau_1 \cdot \vecpsi_1^\prime + \vectau_2 \cdot \vecpsi_2^\prime \right)
	\\
	& \quad \quad
	+ \,
	\frac {
	Q_{ j,1 } \vecphi \cdot Q_{j,2} \vecpsi
	+
	Q_{ j,2 } \vecphi \cdot Q_{j,1} \vecpsi
	-
	\left(
	S_{ j,1 } ( \vecf , \vecphi ) S_{ j,2 } ( \vecf , \vecpsi )
	+
	S_{ j,2 } ( \vecf , \vecphi ) S_{ j,1 } ( \vecf , \vecpsi )
	\right)
	}
	{ \Phi_j ( \| \Delta \vecf \|_{ \mathbb{R}^n } ) }
	\\
	& \quad \quad
	- \,
	\left\{
	\mathscr{G}_{ \Phi , j } ( \vecf ) [ \vecphi ]
	+
	\frac { \mathscr{M}_{ \Phi , j } ( \vecf )
	\Xi_j ( \| \Delta \vecf \|_{ \mathbb{R}^n } )
	( \Delta \vecf \cdot \Delta \vecphi ) }
	{ \| \Delta \vecf \|_{ \mathbb{R}^n }^2 }
	\right\}
	\frac {
	\Xi_j ( \| \Delta \vecf \|_{ \mathbb{R}^n } ) ( \Delta \vecf \cdot \Delta \vecpsi )
	}
	{ \| \Delta \vecf \|_{ \mathbb{R}^n }^2 }
	\\
	& \quad \quad
	- \,
	\mathscr{G}_{ \Phi , j } ( \vecf ) [ \vecpsi ]
	\Xi_j ( \| \Delta \vecf \|_{ \mathbb{R}^n } )
	\frac { \Delta \vecf \cdot \Delta \vecphi } { \| \Delta \vecf \|_{ \mathbb{R}^n }^2 }
	\\
	& \quad \quad
	- \,
	\mathscr{M}_{ \Phi , j } ( \vecf )
	\| \Delta \vecf \|_{ \mathbb{R}^n } \Xi_j^\prime ( \| \Delta \vecf \|_{ \mathbb{R}^n } )
	\frac { ( \Delta \vecf \cdot \vecphi ) ( \Delta \vecf \cdot \Delta \vecpsi ) } { \| \Delta \vecf \|_{ \mathbb{R}^n }^4 }
	\\
	& \quad \quad
	- \,
	\mathscr{M}_{ \Phi , j } ( \vecf ) \Xi_j ( \| \Delta \vecf \|_{ \mathbb{R}^n } )
	\left\{
	\frac { \Delta \vecphi \cdot \Delta \vecpsi } { \| \Delta \vecf \|_{ \mathbb{R}^n }^2 }
	-
	\frac { 2 ( \Delta \vecf \cdot \Delta \vecphi ) ( \Delta \vecf \cdot \Delta \vecpsi ) }
	{ \| \Delta \vecf \|_{ \mathbb{R}^n }^4 }
	\right\}
\end{align*}
leads to our conclusion 
\begin{align*}
	&
	\mathscr{H}_{ \Phi , j } ( \vecf ) [ \vecphi , \vecpsi ]
	\\
	& \quad
	=
	\delta \left( \mathscr{G}_{ \Phi , j } ( \vecf ) [ \vecphi ] \right) [ \vecpsi ]
	+
	\mathscr{G}_{ \Phi , j } ( \vecf ) [ \vecphi ]
	\left( \vectau_1 \cdot \vecpsi_1^\prime + \vectau_2 \cdot \vecpsi_2^\prime \right)
	\\
	& \quad
	=
	\frac {
	Q_{ j,1 } \vecphi \cdot Q_{j,2} \vecpsi
	+
	Q_{ j,2 } \vecphi \cdot Q_{j,1} \vecpsi
	-
	\left(
	S_{ j,1 } ( \vecf , \vecphi ) S_{ j,2 } ( \vecf , \vecpsi )
	+
	S_{ j,2 } ( \vecf , \vecphi ) S_{ j,1 } ( \vecf , \vecpsi )
	\right)
	}
	{ \Phi_j ( \| \Delta \vecf \|_{ \mathbb{R}^n } ) }
	\\
	& \quad \quad
	- \,
	\left\{
	\mathscr{G}_{ \Phi , j } ( \vecf ) [ \vecphi ]
	+
	\frac { \mathscr{M}_{ \Phi , j } ( \vecf )
	\Xi_j ( \| \Delta \vecf \|_{ \mathbb{R}^n } )
	( \Delta \vecf \cdot \Delta \vecphi ) }
	{ \| \Delta \vecf \|_{ \mathbb{R}^n }^2 }
	\right\}
	\frac {
	\Xi_j ( \| \Delta \vecf \|_{ \mathbb{R}^n } ) ( \Delta \vecf \cdot \Delta \vecpsi )
	}
	{ \| \Delta \vecf \|_{ \mathbb{R}^n }^2 }
	\\
	& \quad \quad
	- \,
	\mathscr{G}_{ \Phi , j } ( \vecf ) [ \vecpsi ]
	\Xi_j ( \| \Delta \vecf \|_{ \mathbb{R}^n } )
	\frac { \Delta \vecf \cdot \Delta \vecphi } { \| \Delta \vecf \|_{ \mathbb{R}^n }^2 }
	\\
	& \quad \quad
	- \,
	\mathscr{M}_{ \Phi , j } ( \vecf )
	\| \Delta \vecf \|_{ \mathbb{R}^n } \Xi_j^\prime ( \| \Delta \vecf \|_{ \mathbb{R}^n } )
	\frac { ( \Delta \vecf \cdot \vecphi ) ( \Delta \vecf \cdot \Delta \vecpsi ) } { \| \Delta \vecf \|_{ \mathbb{R}^n }^4 }
	\\
	& \quad \quad
	- \,
	\mathscr{M}_{ \Phi , j } ( \vecf ) \Xi_j ( \| \Delta \vecf \|_{ \mathbb{R}^n } )
	\left\{
	\frac { \Delta \vecphi \cdot \Delta \vecpsi } { \| \Delta \vecf \|_{ \mathbb{R}^n }^2 }
	-
	\frac { 2 ( \Delta \vecf \cdot \Delta\vecphi ) ( \Delta \vecf \cdot \Delta \vecpsi ) }
	{ \| \Delta \vecf \|_{ \mathbb{R}^n }^4 }
	\right\}
	\\
	& \quad
	=
	\frac {
	Q_{ j,1 } \vecphi \cdot Q_{j,2} \vecpsi
	+
	Q_{ j,2 } \vecphi \cdot Q_{j,1} \vecpsi
	-
	\left(
	S_{ j,1 } ( \vecf , \vecphi ) S_{ j,2 } ( \vecf , \vecpsi )
	+
	S_{ j,2 } ( \vecf , \vecphi ) S_{ j,1 } ( \vecf , \vecpsi )
	\right)
	}
	{ \Phi_j ( \| \Delta \vecf \|_{ \mathbb{R}^n } ) }
	\\
	& \quad \quad
	- \,
	\mathscr{G}_{ \Phi , j } ( \vecf ) [ \vecphi ]
	\Xi_j ( \| \Delta \vecf \|_{ \mathbb{R}^n } )
	\frac { \Delta \vecf \cdot \Delta \vecpsi } { \| \Delta \vecf \|_{ \mathbb{R}^n }^2 }
	-
	\mathscr{G}_{ \Phi , j } ( \vecf ) [ \vecpsi ]
	\Xi_j ( \| \Delta \vecf \|_{ \mathbb{R}^n } )
	\frac { \Delta \vecf \cdot \Delta \vecphi } { \| \Delta \vecf \|_{ \mathbb{R}^n }^2 }
	\\
	& \quad \quad
	- \,
	\mathscr{M}_{ \Phi , j } ( \vecf ) \Xi_j ( \| \Delta \vecf \|_{ \mathbb{R}^n } )
	\frac { \Delta \vecphi \cdot \Delta\vecpsi } { \| \Delta \vecf \|_{ \mathbb{R}^n }^2 }
	\\
	& \quad \quad
	+ \,
	\mathscr{M}_{ \Phi , j } ( \vecf )
	\left(
	2 \Xi_j ( \| \Delta \vecf \|_{ \mathbb{R}^n } )
	- \Xi_j ( \| \Delta \vecf \|_{ \mathbb{R}^n } )^2
	- \| \Delta \vecf \|_{ \mathbb{R}^n } \Xi_j^\prime ( \| \Delta \vecf \|_{ \mathbb{R}^n } )
	\right)
	\frac { ( \Delta \vecf \cdot \Delta \vecphi ) ( \Delta \vecf \cdot \Delta \vecpsi ) }
	{ \| \Delta \vecf \|_{ \mathbb{R}^n }^4 }. 
\end{align*}
\qed
\end{proof}
\begin{rem}
$ \Xi_j $ is a constant when $ \Phi (x) = x^\alpha $, and hence the above equation becomes simpler. 
\end{rem}
\section{Sufficient conditions for (A.3) and (A.4)}
\label{Sufficient conditions}
\par
In this section, we consider self-repulsiveness of $ W_\Phi $ and sufficient conditions of $ \Phi $ for assumptions (A.3), (A.4). 
Set
\begin{itemize}
\item[{\rm (A.6)}]
	$ \Phi (x) = \mathcal{O} ( x^2 ) $ ($ x \to +0 $).
\end{itemize}
\begin{prop}
If conditions {\rm (A.1)} and {\rm (A.6)} hold,
then $ W_\Phi $ is self-repulsive. 
\end{prop}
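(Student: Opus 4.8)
The plan is to prove self-repulsiveness in contrapositive form: I read the statement as asserting that finiteness of $\mathcal{E}_\Phi$ forces the arc-length parametrized closed curve $\vecf$ to be injective, so I would show that if $\vecf$ possesses a self-intersection $\vecf(\sigma_1)=\vecf(\sigma_2)$ with $\sigma_1\neq\sigma_2$ in $\mathbb{R}/\mathcal{L}\mathbb{Z}$, then $\mathcal{E}_\Phi(\vecf)=+\infty$. The entire role of (A.6) is that it caps $\Phi$ at the borderline $x^2$ decay near the origin, which is exactly the threshold (the power case $\alpha\geqq2$ recorded in \cite{OH3}) at which the energy density ceases to be integrable across a double point. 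I do not expect to need transversality or any nondegeneracy of the crossing; only the triangle inequality and the unit-speed hypothesis enter.

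Concretely, I set $d_0=\mathrm{dist}_{\mathbb{R}/\mathcal{L}\mathbb{Z}}(\sigma_1,\sigma_2)>0$ and work on a product neighborhood $I_1\times I_2$ with $I_k=(\sigma_k-\epsilon,\sigma_k+\epsilon)$, choosing $\epsilon$ so small that $I_1,I_2$ are disjoint and $\mathrm{dist}_{\mathbb{R}/\mathcal{L}\mathbb{Z}}(s_1,s_2)\geqq d_0/2$ throughout. On this neighborhood I bound the density $\mathscr{M}_\Phi(\vecf)=\Phi(\|\Delta\vecf\|_{\mathbb{R}^n})^{-1}-\Phi(\mathscr{D}(\vecf(s_1),\vecf(s_2)))^{-1}$ from below by two elementary estimates. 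Since $\mathscr{D}=\mathrm{dist}_{\mathbb{R}/\mathcal{L}\mathbb{Z}}(s_1,s_2)\geqq d_0/2$, monotonicity (A.1) gives $\Phi(\mathscr{D})^{-1}\leqq\Phi(d_0/2)^{-1}=:M<\infty$. For the leading term, the unit-speed hypothesis yields $\|\vecf(s_k)-\vecf(\sigma_k)\|_{\mathbb{R}^n}\leqq|s_k-\sigma_k|$, so with $\vecf(\sigma_1)=\vecf(\sigma_2)$ and the triangle inequality,
\[
	\|\Delta\vecf\|_{\mathbb{R}^n}\leqq|s_1-\sigma_1|+|s_2-\sigma_2|=:a+b.
\]
Shrinking $\epsilon$ so that $a+b\leqq2\epsilon$ lies below the threshold in (A.6), the bound $\Phi(x)\leqq Cx^2$ gives $\Phi(\|\Delta\vecf\|_{\mathbb{R}^n})^{-1}\geqq C^{-1}\|\Delta\vecf\|_{\mathbb{R}^n}^{-2}\geqq C^{-1}(a+b)^{-2}$, whence $\mathscr{M}_\Phi(\vecf)\geqq C^{-1}(a+b)^{-2}-M$ on $I_1\times I_2$.

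Since $\mathscr{M}_\Phi(\vecf)\geqq0$ everywhere (the chord is no longer than the arc, so $\|\Delta\vecf\|_{\mathbb{R}^n}\leqq\mathscr{D}$, and (A.1) applies), it suffices to bound $\mathcal{E}_\Phi$ below by the integral over $I_1\times I_2$, and the crux is the divergence of the model integral
\[
	\iint_{(0,\epsilon)^2}\frac{da\,db}{(a+b)^2}=\int_0^\epsilon\left(\frac1a-\frac1{a+\epsilon}\right)da=+\infty,
\]
while $M\,|I_1|\,|I_2|$ stays finite; hence $\mathcal{E}_\Phi(\vecf)=+\infty$, which is the claimed self-repulsiveness. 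The main obstacle is not analytic depth but the bookkeeping of the localization: one must verify that $\sigma_1\neq\sigma_2$ genuinely gives $d_0>0$, that the intrinsic distance remains bounded below on the chosen neighborhood despite periodicity (so $\mathscr{D}$ is comparable to a positive constant there), and that the merely Lipschitz (not necessarily $C^1$) regularity of a general unit-speed $\vecf\in W_\Phi$ still licenses $\|\vecf(s)-\vecf(\sigma)\|_{\mathbb{R}^n}\leqq|s-\sigma|$, which it does since arc-length parametrization makes $\vecf$ $1$-Lipschitz. It is precisely the borderline $(a+b)^{-2}$ logarithmic divergence, available because (A.6) caps $\Phi$ by $x^2$, that isolates the hypothesis: for $\Phi$ decaying more slowly near the origin (the excluded range $\alpha<2$) the very same density is integrable and self-repulsiveness is lost.
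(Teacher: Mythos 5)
Your proposal is correct and follows essentially the same route as the paper: contrapositive via a self-intersection point, lower-bounding the density near the double point by $C^{-1}\|\Delta\vecf\|_{\mathbb{R}^n}^{-2}$ minus a constant using (A.6) and the $1$-Lipschitz bound $\|\Delta\vecf\|_{\mathbb{R}^n}\leqq|s_1-\sigma_1|+|s_2-\sigma_2|$, and concluding from the divergence of the model integral. The only differences are cosmetic (a square neighborhood and $(a+b)^{-2}$ in place of the paper's disc and $(s_1^2+s_2^2)^{-1}$, which are comparable).
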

\begin{proof}
We prove the assertion in the same way as \cite{OH3}. 
Here denying the existence of $ s_\ast \ne s_\dagger $ satisfying $ \vecf ( s_\ast ) = \vecf ( s_\dagger ) $,
we will show that the energy is infinite.
Since the energy density of $ \mathcal{E}_\Phi $ is non-negative, 
\[
	\mathcal{E}_\Phi ( \vecf )
	\geqq
	\iint_{ | s_1 - s_\ast |^2 + | s_2 - s_\dagger |^2 \leqq \epsilon^2 }
	\left(
	\frac 1 { \Phi ( \| \vecf ( s_1 ) - \vecf ( s_2 ) \|_{ \mathbb{R}^n } ) }
	-
	\frac 1 { \Phi ( \mathscr{D} ( \vecf ( s_1 ) , \vecf ( s_2 ) ) ) }
	\right)
	d s_1 d s_2
\]
holds for sufficiently small $ \epsilon > 0 $. 
There exists $ \delta > 0 $ which satisfies \[
	\mathscr{D} ( \vecf ( s_1 ) , \vecf ( s_2 ) ) \geqq \delta
\]
independent of $ s_1 $ and $ s_2 $ which belong to interval of integration since $ s_\ast \ne s_\dagger $. 
Hence
\[
	-
	\frac 1 { \Phi ( \mathscr{D} ( \vecf ( s_1 ) , \vecf ( s_2 ) ) ) }
	\geqq
	-
	\frac 1 { \Phi ( \delta ) }
\]
follows. Since there exists $ C > 0 $ with $ \Phi ( x ) \leqq C x^2 $ when $ x \in \left( 0 , \frac { \mathcal{L} } 2 \right] $, it holds that
\[
	\frac 1 { \Phi ( \| \vecf ( s_1 ) - \vecf ( s_2 ) \|_{ \mathbb{R}^n } ) }
	-
	\frac 1 { \Phi ( \mathscr{D} ( \vecf ( s_1 ) , \vecf ( s_2 ) ) ) }
	\geqq
	\frac 1 { C \| \vecf ( s_1 ) - \vecf ( s_2 ) \|_{ \mathbb{R}^n }^2 }
	-
	\frac 1 { \Phi ( \delta ) }. 
\]
It is sufficient to show that
\[
	\iint_{ | s_1 - s_\ast |^2 + | s_2 - s_\dagger |^2 \leqq \epsilon^2 }
	\frac { d s_1 d s_2 } { \| \vecf ( s_1 ) - \vecf ( s_2 ) \|_{ \mathbb{R}^n }^2 }
	=
	\infty
\]
and then $ \mathcal{E}_\Phi ( \vecf ) = \infty $ contradicts to the fact. 
We denote $ s_1 - s_\ast $ and $ s_2 - s_\dagger $ simply by $ s_1 $ and $ s_2 $ respectively. 
Simple calculation 
\begin{align*}
	\| \vecf ( s_1 + s_\ast ) - \vecf ( s_2 + s_\dagger ) \|_{ \mathbb{R}^n }
	= & \
	\| \vecf ( s_1 + s_\ast ) - \vecf ( s_\ast ) + \vecf ( s_\dagger ) - \vecf ( s_2 + s_\dagger ) \|_{ \mathbb{R}^n }
	\\
	\leqq & \
	\| \vecf ( s_1 + s_\ast ) - \vecf ( s_\ast ) \|_{ \mathbb{R}^n }
	+
	\| \vecf ( s_\dagger ) - \vecf ( s_2 + s_\dagger ) \|_{ \mathbb{R}^n }
	\\
	\leqq & \
	| s_1 | + | s_2 |
\end{align*}
leads 
\[
	\iint_{ s_1^2 + s_2^2 \leqq \epsilon^2 }
	\frac { d s_1 d s_2 } { \| \vecf ( s_1 + s_\ast ) - \vecf ( s_2 + \delta ) \|_{ \mathbb{R}^n }^2 }
	\geqq
	C
	\iint_{ s_1^2 + s_2^2 \leqq \epsilon^2 }
	\frac { d s_1 d s_2 } { s_1^2 + s_2^2 }
	= \infty. 
\]
\qed
\end{proof}
\par
The following three propositions are based on Blatt's idea in \cite{Blatt}. 
\begin{prop}
We assume {\rm (A.1)} and {\rm (A.6)}. 
If $ \vecf \in W_\Phi $ and $ \mathcal{E}_\Phi ( \vecf ) < \infty $, 
$ \vecf $ is bi-Lipschitz with regard to the arc-length parameter. 
\end{prop}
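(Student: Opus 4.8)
The plan is to split the bi-Lipschitz estimate into its two halves. The upper bound is immediate: since $ \vecf $ is parametrized by arc-length, $ \| \vecf^\prime \|_{ \mathbb{R}^n } \equiv 1 $, so $ \| \Delta \vecf \|_{ \mathbb{R}^n } = \left\| \int_{ s_2 }^{ s_1 } \vectau \, d \sigma \right\|_{ \mathbb{R}^n } \leqq | s_1 - s_2 | $, and choosing the shorter arc gives $ \| \Delta \vecf \|_{ \mathbb{R}^n } \leqq \mathrm{dist}_{ \mathbb{R} / \mathcal{L} \mathbb{Z} } ( s_1 , s_2 ) $. It therefore remains to produce a constant $ c > 0 $ with $ \| \Delta \vecf \|_{ \mathbb{R}^n } \geqq c \, \mathrm{dist}_{ \mathbb{R} / \mathcal{L} \mathbb{Z} } ( s_1 , s_2 ) $ for all $ s_1 , s_2 $, and I would establish this by treating short arcs and long arcs separately, following Blatt.

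For short arcs the basic tool is the elementary identity coming from $ \| \vectau \|_{ \mathbb{R}^n } \equiv 1 $: writing $ I $ for the shorter arc joining $ s_1 $ and $ s_2 $ and $ \ell = \mathrm{dist}_{ \mathbb{R} / \mathcal{L} \mathbb{Z} } ( s_1 , s_2 ) $ for its length,
\[
	\ell^2 - \| \Delta \vecf \|_{ \mathbb{R}^n }^2
	=
	\frac 12 \iint_{ I \times I } \| \vectau ( \sigma ) - \vectau ( \sigma^\prime ) \|_{ \mathbb{R}^n }^2 \, d \sigma \, d \sigma^\prime .
\]
I would estimate the right-hand side by inserting the weight defining $ W_\Phi $: since $ \Phi $ is increasing and $ \mathrm{dist}_{ \mathbb{R} / \mathcal{L} \mathbb{Z} } ( \sigma , \sigma^\prime ) \leqq \ell $ on $ I \times I $, assumption (A.6) gives $ \Phi ( \mathrm{dist}_{ \mathbb{R} / \mathcal{L} \mathbb{Z} } ( \sigma , \sigma^\prime ) ) \leqq \Phi ( \ell ) \leqq C \ell^2 $ for small $ \ell $, so that
\[
	\ell^2 - \| \Delta \vecf \|_{ \mathbb{R}^n }^2
	\leqq
	\frac { C \ell^2 } 2
	\iint_{ I \times I }
	\frac { \| \vectau ( \sigma ) - \vectau ( \sigma^\prime ) \|_{ \mathbb{R}^n }^2 }
	{ \Phi ( \mathrm{dist}_{ \mathbb{R} / \mathcal{L} \mathbb{Z} } ( \sigma , \sigma^\prime ) ) } \, d \sigma \, d \sigma^\prime .
\]
Because $ \vecf \in W_\Phi $, the integrand lies in $ L^1 ( ( \mathbb{R} / \mathcal{L} \mathbb{Z} )^2 ) $, so its integral over the square $ I \times I $ of area $ \ell^2 $ tends to $ 0 $ as $ \ell \to 0 $, uniformly in the base point by the uniform absolute continuity of an $ L^1 $ density on the compact torus. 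Hence $ \| \Delta \vecf \|_{ \mathbb{R}^n }^2 \geqq \ell^2 ( 1 - o(1) ) $, and there is $ \ell_0 > 0 $ with $ \| \Delta \vecf \|_{ \mathbb{R}^n } \geqq \tfrac 12 \ell $ whenever $ \ell \leqq \ell_0 $.

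For long arcs I would argue by contradiction and compactness. If the lower bound failed on the set $ \{ \mathrm{dist}_{ \mathbb{R} / \mathcal{L} \mathbb{Z} } ( s_1 , s_2 ) \geqq \ell_0 \} $, there would be sequences $ s_1^k , s_2^k $ with $ \mathrm{dist}_{ \mathbb{R} / \mathcal{L} \mathbb{Z} } ( s_1^k , s_2^k ) \geqq \ell_0 $ and $ \| \vecf ( s_1^k ) - \vecf ( s_2^k ) \|_{ \mathbb{R}^n } \to 0 $; passing to convergent subsequences and using that $ \vecf $ is continuous yields $ s_1^\ast \ne s_2^\ast $ with $ \vecf ( s_1^\ast ) = \vecf ( s_2^\ast ) $, a self-intersection. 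This contradicts the self-repulsiveness of $ W_\Phi $ proved in the preceding proposition, which forces $ \mathcal{E}_\Phi ( \vecf ) = \infty $. Consequently the compact set $ \{ \mathrm{dist}_{ \mathbb{R} / \mathcal{L} \mathbb{Z} } ( s_1 , s_2 ) \geqq \ell_0 \} $ carries a positive minimum $ m = \min \| \Delta \vecf \|_{ \mathbb{R}^n } > 0 $, and combining the two regimes gives the lower bound with $ c = \min \{ \tfrac 12 , m / \mathcal{L} \} $.

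The step I expect to be the main obstacle is the uniformity in the short-arc estimate: one must know that the $ W_\Phi $-seminorm integral over $ I \times I $ decays as $ \ell \to 0 $ at a rate independent of where $ I $ sits, which is precisely the uniform absolute continuity of the $ L^1 $ density and is what upgrades a pointwise smallness statement into a single admissible constant $ c $. The remaining work — verifying the identity and checking that (A.6) applies on the relevant range of $ \ell $ — is routine.
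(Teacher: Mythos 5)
Your proposal is correct and follows essentially the same route as the paper: the trivial upper bound from the arc-length parametrization, a short-arc lower bound obtained from the uniform smallness of the localized $W_\Phi$-seminorm combined with (A.1) and (A.6), and a long-arc bound by compactness together with the self-repulsiveness established in the preceding proposition. The only difference is in the mechanics of the short-arc step: where you expand $\ell^2 - \| \Delta \vecf \|_{\mathbb{R}^n}^2$ exactly as $\frac12 \iint_{I \times I} \| \vectau(\sigma) - \vectau(\sigma') \|_{\mathbb{R}^n}^2 \, d\sigma \, d\sigma'$, the paper bounds the $L^1$-mean oscillation of $\vectau$ by $\frac12$ via Cauchy--Schwarz and recovers $\| \Delta \vecf \|_{\mathbb{R}^n} \geqq \frac12 | s_1 - s_2 |$ from the dual representation $\| \Delta \vecf \|_{\mathbb{R}^n} = \sup_{\| \vecv \| \leqq 1} \int \vectau \cdot \vecv \, ds$; both versions rest on exactly the same two ingredients.
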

\begin{proof}
Since $ \| \Delta \vecf \|_{ \mathbb{R}^n } \leqq \mathscr{D} ( \vecf ( s_1 ) , \vecf ( s_2 ) ) $ is obvious for the arc-length parameter, we show the opposite inequality. 
From $ \vecf \in W_\Phi $, we gain 
\[
	\int_{ \mathbb{R} / {\cal L} \mathbb{Z} }
	\int_{ - \frac {\cal L} 2 }^{ \frac {\cal L} 2 }
	\frac { \| \vectau ( s_1 + s_2 ) - \vectau ( s_1 ) \|_{ \mathbb{R}^n }^2 }
	{ \Phi ( | s_2 | ) }
	\, d s_2 d s_1
	<
	\infty. 
\]
For $ r \in ( 0 , \frac {\cal L} 2 ) $, 
\[
	\frac { \| \vectau ( s_1 + s_2 ) - \vectau ( s_1 ) \|_{ \mathbb{R}^n }^2 }
	{ \Phi ( | s_2 | ) }
	\chi_{ [-r,r] } ( s_2 )
	\leqq
	\frac { \| \vectau ( s_1 + s_2 ) - \vectau ( s_1 ) \|_{ \mathbb{R}^n }^2 }
	{ \Phi ( | s_2 | ) }
\]
holds and from Lebesgue's convergence theorem, we obtain 
\[
	\lim_{ r \to + 0 }
	\int_{ \mathbb{R} / {\cal L} \mathbb{Z} }
	\int_{ - r }^r
	\frac { \| \vectau ( s_1 + s_2 ) - \vectau ( s_1 ) \|_{ \mathbb{R}^n }^2 }
	{ \Phi ( | s_2 | ) }
	\, d s_2 d s_1
	=
	0. 
\]
Hence there exists $ \delta \in ( 0 , \frac { \min \{ 1 , {\cal L} \} } 2 ) $ which satisfies
\[
	\sup_{ s \in \mathbb{R} / {\cal L} \mathbb{Z} }
	\int_{ s - r }^{ s + r }
	\int_{ - r }^r
	\frac { \| \vectau ( s_1 + s_2 ) - \vectau ( s_1 ) \|_{ \mathbb{R}^n }^2 }
	{ \Phi ( | s_2 | ) }
	\, d s_2 d s_1
	\leqq
	\left( \frac 12 \right)^2 
	\inf_{ x \in \left( 0 , \frac { \mathcal{L} } 2 \right]} \frac { x^2 } { \Phi ( x ) }
\]
for $ r \leqq \delta $ and then we obtain 
\begin{align*}
	&
	\frac 1 { 2r }
	\int_{ s-r }^{ s+r }
	\left\| \vectau( s_1 )
	- \frac 1 { 2r } \int_{ s-r }^{ s+r } \vectau ( s_2 ) \, d s_2
	\right\|_{ \mathbb{R}^n }
	d s_1
	\\
	& \quad
	\leqq
	\frac 1 { 4 r^2 }
	\int_{ s-r }^{ s+r }
	\int_{ s-r }^{ s+r }
	\| \vectau ( s_1 ) - \vectau ( s_2 ) \|_{ \mathbb{R}^n }
	d s_1 d s_2
	\\
	& \quad
	\leqq
	\left(
	\frac 1 { 4 r^2 }
	\int_{ s-r }^{ s+r }
	\int_{ s-r }^{ s+r }
	\| \vectau ( s_1 ) - \vectau ( s_2 ) \|_{ \mathbb{R}^n }^2
	d s_1 d s_2
	\right)^{ \frac 12 }. 
\end{align*}
Consequently we obtain 
\begin{align*}
	&
	\left(
	\frac 1 { 4 r^2 }
	\int_{ s-r }^{ s+r }
	\int_{ s-r }^{ s+r }
	\| \vectau ( s_1 ) - \vectau ( s_2 ) \|_{ \mathbb{R}^n }^2
	d s_1 d s_2
	\right)^{ \frac 12 }
	\\
	& \quad
	\leqq
	\left\{
	\frac { \Phi ( 2r ) } { 4 r^2 }
	\int_{ s-r }^{ s+r }
	\int_{ s-r }^{ s+r }
	\frac { \| \vectau ( s_1 ) - \vectau ( s_2 ) \|_{ \mathbb{R}^n }^2 }
	{ \Phi ( | s_1 - s_2 | ) }
	d s_1 d s_2
	\right\}^{ \frac 12 }
	\\
	& \quad
	\leqq
	\left(
	\sup_{ x \in \left( 0 , \frac { \mathcal{L} } 2 \right]} \frac { \Phi ( x ) }  { x^2 }
	\right)^{ \frac 12 }
	\left\{
	\int_{ s-r }^{ s+r }
	\int_{ s-r }^{ s+r }
	\frac { \| \vectau ( s_1 ) - \vectau ( s_2 ) \|_{ \mathbb{R}^n }^2 }
	{ \Phi ( | s_1 - s_2 | ) }
	d s_1 d s_2
	\right\}^{ \frac 12 }
	\\
	& \quad
	\leqq
	\frac 12, 
\end{align*}
which is easily lead from 
\[
	1 \leqq \frac { \Phi ( 2r ) } { \Phi ( | s_1 - s_2 | ) }. 
\]
Since $ \displaystyle{ \left\| \frac 1 {2r} \int_{ s - r }^{ s + r } \vectau ( s_2 ) \, d s_2 \right\|_{ \mathbb{R}^n } \leqq 1 } $ holds, we have
\[
	\inf_{ \| {\scriptsize \vecv} \|_{ \mathbb{R}^n } \leqq 1 }
	\frac 1 { 2r }
	\int_{ s - r }^{ s + r } \| \vectau ( s_1 ) - \vecv \|_{ \mathbb{R}^n }
	d s_1
	\leqq \frac 12. 
\]
\par
We consider the case when $ s_j \in \mathbb{R} / {\cal L} \mathbb{Z} $ satisfies $ | s_1 - s_2 | = 2 r \leqq 2 \delta $,
and suppose that $ s_3 \in \mathbb{R} / {\cal L} \mathbb{Z} $ satisfies $ \mathscr{D} ( \vecf ( s_1 ) , \vecf ( s_3 ) ) = \mathscr{D} ( \vecf ( s_3 ) , \vecf ( s_2 ) ) = r $. 
Then it holds that 
\[
	\begin{array}{rl}
	\| \vecf ( s_1 ) - \vecf ( s_2 ) \|_{ \mathbb{R}^n }
	= & \zume
	\displaystyle{
	\left\| \int_{ s_3 - r }^{ s_3 + r } \vectau ( s ) \, ds \right\|_{ \mathbb{R}^n }
	=
	\sup_{ \| {\scriptsize \vecv} \|_{ \mathbb{R}^n } \leqq 1 }
	\int_{ s_3 - r }^{ s_3 + r }
	\vectau ( s ) \cdot \vecv \, ds
	}
	\\
	= & \zume
	\displaystyle{
	\sup_{ \| {\scriptsize \vecv} \|_{ \mathbb{R}^n } \leqq 1 }
	\int_{ s_3 - r }^{ s_3 + r }
	\vectau ( s ) \cdot
	\{ \vectau ( s ) + ( \vecv - \vectau (s) )\} \, ds
	}
	\\
	\geqq & \zume
	\displaystyle{
	\left(
	1 -
	\inf_{ \| {\scriptsize \vecv} \|_{ \mathbb{R}^n } \leqq 1 }
	\frac 1 { 2r }
	\int_{ s_3 - r }^{ s_3 + r }
	\| \vectau ( s ) - \vecv \|_{ \mathbb{R}^n }
	ds
	\right)
	| s_1 - s_2 |
	}
	\\
	\geqq & \zume
	\displaystyle{
	\frac 12 | s_1 - s_2 |
	}. 
	\end{array}
\]
\par
Set $ I_\delta = \{ ( s_1 , s_2 ) \in \mathbb{R} / {\cal L} \mathbb{Z} \times \mathbb{R} / {\cal L} \mathbb{Z} \, | \, \mathscr{D} ( \vecf ( s_1 ) , \vecf ( s_2 ) ) \geqq 2 \delta \} $. 
Since $ \vecf $ is a continuous closed curve from the previous proposition, it follows that 
\[
	\inf_{ ( s_1 , s_2 ) \in I_\delta }
	\frac { \| \vecf ( s_1 ) - \vecf ( s_2 ) \|_{ \mathbb{R}^n } } { \mathscr{D} ( \vecf ( s_1 ) , \vecf ( s_2 ) ) }
	> 0. 
\]
\qed
\end{proof}
\par
We add the following conditions. 
\begin{itemize}
\item[{\rm (A.7)}]
	\begin{itemize}
	\item[$ \bullet $]
		$ \Phi \in C^1 ( 0 , \infty ) $,
		$ \Phi ( \sqrt x ) $ are convex. 
	\item[$ \bullet $]
		There exists $ C(\mathcal{L}) > 0 $ with $ \displaystyle{ \int_{ x \in \left( 0 , \frac { \mathcal{L} } 2 \right] } \frac { x \Phi^\prime (x) } { \Phi (x) } \geqq C(\mathcal{L}) } $. 
	\item[$ \bullet $]
		There exists a positive constant $ C(\mathcal{L}) $ and a function $ \chi $ which satisfy $ \displaystyle{ \sup_{ x \in\left( 0 , \frac { \mathcal{L} } 2 \right] } \frac { \Phi (x) } { t \Phi ( t^{-1} x ) } \leqq C(\mathcal{L}) \chi (t) } $ and $ \displaystyle{ \int_0^\epsilon \chi (t) \, dt = o ( \epsilon ) } $ as $ \epsilon \to + 0 $. 
	\item[$ \bullet $]
		$ C^\infty ( \mathbb{R} / \mathcal{L} \mathbb{Z} ) $ is dense in $ W_\Phi $. 
	\end{itemize}
\end{itemize}
\begin{prop}
We assume {\rm (A.1)} and {\rm (A.7)}. 
If $ \mathcal{E}_\Phi ( \vecf ) < \infty $, then $ \vecf \in W_\Phi $. 
\end{prop}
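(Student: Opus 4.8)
The plan is to bound the double integral defining $W_\Phi$ by the energy $\mathcal{E}_\Phi(\vecf)$. Since an arc-length parametrized closed curve is automatically $1$-Lipschitz, we already have $\vecf\in W^{1,\infty}\subset W^{1,2}$, so it suffices to prove
\[
	\iint_{(\mathbb{R}/\mathcal{L}\mathbb{Z})^2}
	\frac{\|\Delta\vectau\|_{\mathbb{R}^n}^2}{\Phi(\mathrm{dist}_{\mathbb{R}/\mathcal{L}\mathbb{Z}}(s_1,s_2))}\,ds_1\,ds_2<\infty .
\]
First I would fix the arc-length parametrization, so that $\vectau=\vecf^\prime$ has $\|\vectau\|_{\mathbb{R}^n}\equiv1$ and $\mathscr{D}(\vecf(s_1),\vecf(s_2))=|s_1-s_2|$ for $|s_1-s_2|\leqq\frac{\mathcal{L}}2$; as every integrand below is non-negative, all interchanges of integration are legitimate by Tonelli. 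The starting point is the defect identity
\[
	|s_1-s_2|^2-\|\Delta\vecf\|_{\mathbb{R}^n}^2
	=
	\frac12\int_{s_2}^{s_1}\int_{s_2}^{s_1}\|\vectau(\sigma_1)-\vectau(\sigma_2)\|_{\mathbb{R}^n}^2\,d\sigma_1\,d\sigma_2 ,
\]
which follows from $\|\Delta\vecf\|_{\mathbb{R}^n}^2=\int_{s_2}^{s_1}\int_{s_2}^{s_1}\vectau(\sigma_1)\cdot\vectau(\sigma_2)\,d\sigma_1\,d\sigma_2$ and expresses the chord/arc defect as an integral of $\|\Delta\vectau\|_{\mathbb{R}^n}^2$.

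Next I would obtain a pointwise lower bound for the energy density. Writing $g(x)=\Phi(\sqrt{x})$, the first bullet of (A.7) makes $g$ convex and increasing, whence $g(b)-g(a)\geqq g^\prime(a)(b-a)$ for $a=\|\Delta\vecf\|_{\mathbb{R}^n}^2\leqq b=|s_1-s_2|^2$; the second bullet, which reads $\frac{xg^\prime(x)}{g(x)}\geqq\frac12 C(\mathcal{L})$, gives $g^\prime(a)\geqq\frac{C(\mathcal{L})}2\frac{g(a)}a$. Combining these with $a\leqq b$ yields
\[
	\frac1{\Phi(\|\Delta\vecf\|_{\mathbb{R}^n})}-\frac1{\Phi(|s_1-s_2|)}
	=\frac{g(b)-g(a)}{g(a)g(b)}
	\geqq\frac{C(\mathcal{L})}2\,\frac{|s_1-s_2|^2-\|\Delta\vecf\|_{\mathbb{R}^n}^2}{|s_1-s_2|^2\,\Phi(|s_1-s_2|)}.
\]
Substituting the defect identity and interchanging the order of integration, the inner double integral of $\|\vectau(\sigma_1)-\vectau(\sigma_2)\|_{\mathbb{R}^n}^2$ becomes integrated against a kernel: for fixed $(\sigma_1,\sigma_2)$ with $\delta=|\sigma_1-\sigma_2|$, the pairs $(s_1,s_2)$ enclosing them with $|s_1-s_2|\leqq\frac{\mathcal{L}}2$ contribute
\[
	K(\delta)=\int_\delta^{\frac{\mathcal{L}}2}\frac{r-\delta}{r^2\,\Phi(r)}\,dr ,
\]
so that $\mathcal{E}_\Phi(\vecf)\geqq\frac{C(\mathcal{L})}4\iint\|\vectau(\sigma_1)-\vectau(\sigma_2)\|_{\mathbb{R}^n}^2\,K(|\sigma_1-\sigma_2|)\,d\sigma_1\,d\sigma_2$.

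The heart of the matter, and the step I expect to be the main obstacle, is the lower bound
\[
	K(\delta)\geqq\frac{c(\mathcal{L})}{\Phi(\delta)}\qquad\left(0<\delta\leqq\tfrac{\mathcal{L}}2\right).
\]
For $r\geqq2\delta$ one has $\frac{r-\delta}{r}\geqq\frac12$, hence $K(\delta)\geqq\frac12\int_{2\delta}^{\mathcal{L}/2}\frac{dr}{r\Phi(r)}$, and the task reduces to showing this integral is comparable to $\Phi(\delta)^{-1}$. This is exactly what the scaling hypothesis of the third bullet is meant to supply: from $\Phi(tx)\leqq C(\mathcal{L})\,t\chi(t)\Phi(x)$ (taken with $x=\frac{\mathcal{L}}2$, $t=\frac{2r}{\mathcal{L}}$) one bounds $\Phi(r)$ above by $r\chi(\tfrac{2r}{\mathcal{L}})$ up to constants, and $\int_0^\epsilon\chi(t)\,dt=o(\epsilon)$ is what guarantees the resulting one-dimensional weight is integrable with the correct rate near $r=2\delta$; for $\Phi(x)=x^\alpha$ this reproduces $K(\delta)\sim\frac1{\alpha(\alpha+1)}\delta^{-\alpha}$. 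The delicate part is to convert this scaling estimate into the genuinely pointwise comparison $K(\delta)\gtrsim\Phi(\delta)^{-1}$: the lower growth bound (second bullet) forces $\Phi$ to grow at least at a fixed polynomial rate so the integral converges at its lower end, while the $\chi$-estimate (third bullet) prevents $\Phi$ from growing so fast that the weight $\frac1{r\Phi(r)}$ collapses relative to $\Phi(\delta)^{-1}$ — it is the balance between these two one-sided controls that makes the comparison hold.

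Finally, inserting $K(\delta)\geqq c(\mathcal{L})\Phi(\delta)^{-1}$ gives
\[
	\iint_{(\mathbb{R}/\mathcal{L}\mathbb{Z})^2}
	\frac{\|\Delta\vectau\|_{\mathbb{R}^n}^2}{\Phi(\mathrm{dist}_{\mathbb{R}/\mathcal{L}\mathbb{Z}}(s_1,s_2))}\,ds_1\,ds_2
	\leqq\frac{4}{c(\mathcal{L})\,C(\mathcal{L})}\,\mathcal{E}_\Phi(\vecf)+(\text{bounded terms}),
\]
where the bounded terms come from pairs with $|s_1-s_2|$ bounded away from $0$, on which $\Phi$ is bounded below and $\|\Delta\vectau\|_{\mathbb{R}^n}\leqq2$. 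Since $\mathcal{E}_\Phi(\vecf)<\infty$, the left-hand side is finite, i.e. $\vecf\in W_\Phi$. The density of $C^\infty(\mathbb{R}/\mathcal{L}\mathbb{Z})$ in $W_\Phi$ (fourth bullet of (A.7)) is not needed for the estimate itself, which uses only that $\vecf$ is $1$-Lipschitz, but I would keep it in reserve should one wish to justify the defect identity and the Fubini interchange first for smooth curves and then pass to the limit.
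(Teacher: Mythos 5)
Your first half --- the defect identity, the convexity lower bound $\Phi(|\Delta s|)-\Phi(\|\Delta\vecf\|_{\mathbb{R}^n})\geqq g^\prime(\|\Delta\vecf\|_{\mathbb{R}^n}^2)\bigl(|\Delta s|^2-\|\Delta\vecf\|_{\mathbb{R}^n}^2\bigr)$ with $g(x)=\Phi(\sqrt x)$, and the use of the second bullet of (A.7) to bound the resulting weight --- is exactly the paper's starting point. But from there you diverge, and the step you yourself flag as ``the heart of the matter'', namely the pointwise kernel comparison $K(\delta)=\int_\delta^{\mathcal{L}/2}\frac{r-\delta}{r^2\Phi(r)}\,dr\geqq c(\mathcal{L})\Phi(\delta)^{-1}$, is not proved and does not follow from (A.1) and (A.7) as stated. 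Every route to that lower bound (restricting to $r\in[\delta,2\delta]$, or to $r\geqq 2\delta$) requires an \emph{upper} doubling estimate $\Phi(2\delta)\leqq C\Phi(\delta)$, equivalently an upper bound on the elasticity $x\Phi^\prime(x)/\Phi(x)$. The hypotheses in (A.7) point the other way: the second bullet is a \emph{lower} bound on $x\Phi^\prime/\Phi$ (which in fact yields only the reverse inequality $K(\delta)\lesssim\Phi(\delta)^{-1}$ via $\frac{1}{r\Phi(r)}\leqq -\frac{1}{C}\frac{d}{dr}\frac{1}{\Phi(r)}$), and the third bullet controls $\Phi(x)/\bigl(t\Phi(t^{-1}x)\bigr)$ only in the regime of small $t$ where $\int_0^\epsilon\chi(t)\,dt=o(\epsilon)$ bites, i.e.\ it compares $\Phi$ at an argument to $\Phi$ at a \emph{much larger} one, never at a comparable one. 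If $\Phi$ were allowed to jump violently between $\delta$ and $2\delta$, $K(\delta)$ would be $o(\Phi(\delta)^{-1})$ and your final inequality would fail; ruling this out needs a doubling hypothesis of the type (A.5)(a), which the proposition does not assume. So there is a genuine gap precisely where you anticipated one.

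For comparison, the paper avoids any such pointwise kernel bound. After the same convexity step it splits the target seminorm into a far part (trivially $\leqq 4\mathcal{L}^2/\Phi(\epsilon)$) and a near-diagonal part, estimates $\|\Delta\vectau\|_{\mathbb{R}^n}^2$ by inserting intermediate points $s_1-(\Delta s)t_1$, $s_2+(\Delta s)t_2$ and averaging over $t_1,t_2\in[0,\tfrac{2\epsilon}{\mathcal{L}}]$: the cross term is controlled by $\epsilon^{-2}\mathcal{E}_\Phi(\vecf)$ through the convexity estimate, while the two remaining terms are, after the change of variables $u_1=(\Delta s)t_1$ and the third bullet of (A.7), bounded by $\frac{C(\mathcal{L})}{\epsilon}\int_0^{2\epsilon/\mathcal{L}}\chi(t)\,dt=o(1)$ times the near-diagonal seminorm itself, which is then absorbed. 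This absorption requires the quantity being absorbed to be finite a priori, which is why the paper first argues for smooth curves and then invokes the density of $C^\infty$ in $W_\Phi$ (the fourth bullet, which you set aside). If you are willing to add a doubling assumption such as (A.5)(a) --- which the main theorem assumes anyway --- your kernel argument would close and would indeed be cleaner, dispensing with both the absorption and the density hypothesis; under (A.1) and (A.7) alone, it does not.
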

\begin{proof}
It follows that 
\begin{align*}
	\mathcal{E}_\Phi ( \vecf )
	= & \
	\int_{ \mathbb{R} / \mathcal{L} \mathbb{Z} }
	\int_{ - \frac { \mathcal{L} } 2 }^{ \frac { \mathcal{L} } 2 }
	\left(
	\frac 1 { \Phi ( \| \Delta \vecf \|_{ \mathbb{R}^n } ) }
	-
	\frac 1 { \Phi ( | \Delta s | ) }
	\right)
	d s_1 d s_2
	\\
	= & \
	\int_{ \mathbb{R} / \mathcal{L} \mathbb{Z} }
	\int_{ - \frac { \mathcal{L} } 2 }^{ \frac { \mathcal{L} } 2 }
	\frac { \Phi ( | \Delta s | ) - \Phi ( \| \Delta \vecf \|_{ \mathbb{R}^n } ) } { \Phi ( \| \Delta \vecf \|_{ \mathbb{R}^n } ) \Phi ( | \Delta s | ) }
	\, d s_1 d s_2. 
\end{align*}
Since $ \Phi $ is in $ C^1 $,
and since $ \Phi ( \sqrt x ) $ is convex,
it follows that 
\begin{align*}
	\Phi ( | \Delta s | ) - \Phi ( \| \Delta \vecf \|_{ \mathbb{R}^n } )
	= & \
	\int_{ \| \Delta \scriptsize {\vecf} \|_{ \mathbb{R}^n }^2 }^{ | \Delta s |^2 }
	\frac d { dx } \Phi ( \sqrt x ) \, dx
	\geqq
	\int_{ \| \Delta \scriptsize {\vecf} \|_{ \mathbb{R}^n }^2 }^{ | \Delta s |^2 }
	\left. \frac d { dx } \Phi ( \sqrt x ) \right|_{ x = \| \Delta \mbox{\boldmath{\scriptsize $ f $}} \|_{ \mathbb{R}^n }^2 } dx
	\\
	= & \
	\frac { \Phi^\prime ( \| \Delta \vecf \|_{ \mathbb{R}^n } ) } { 2 \| \Delta \vecf \|_{ \mathbb{R}^n } }
	\left( | \Delta s |^2 - \| \Delta \vecf \|_{ \mathbb{R}^n }^2 \right)
	\\
	= & \
	\frac { \Phi^\prime ( \| \Delta \vecf \|_{ \mathbb{R}^n } ) } { 4 \| \Delta \vecf \|_{ \mathbb{R}^n } }
	\int_{ s_1 }^{ s_2 } \int_{ s_1 }^{ s_2 }
	\| \vectau ( s_3 ) - \vectau ( s_4 ) \|_{ \mathbb{R}^n }^2 d s_3 d s_4. 
\end{align*}
Consequently, we obtain 
\[
	\mathcal{E} ( \vecf )
	\geqq
	\frac 14
	\int_{ \mathbb{R} / \mathcal{L} \mathbb{Z} }
	\int_{ - \frac { \mathcal{L} } 2 }^{ \frac { \mathcal{L} } 2 }
	\frac { \Phi^\prime ( \| \Delta \vecf \|_{ \mathbb{R}^n } ) } { \| \Delta \vecf \|_{ \mathbb{R}^n } \Phi ( \| \Delta \vecf \|_{ \mathbb{R}^n } ) \Phi ( | \Delta s | ) }
	\int_{ s_1 }^{ s_2 } \int_{ s_1 }^{ s_2 }
	\| \vectau ( s_3 ) - \vectau ( s_4 ) \|_{ \mathbb{R}^n }^2 d s_3 d s_4
	d s_1 d s_2. 
\]
Let $ \epsilon \in \left( 0 , \frac { \mathcal{L} } 2 \right) $. 
Decomposing the interval of integration, we gain
\begin{align*}
	&
	\iint_{ ( \mathbb{R} / \mathcal{L} \mathbb{Z} )^2 }
	\frac { \| \Delta \vectau \|_{ \mathbb{R}^n }^2 } { \Phi ( | \Delta s | ) }
	\, d s_1 d s_2
	\\
	& \quad
	=
	\int_{ \mathbb{R} / \mathcal{L} \mathbb{Z} }
	\left( \int_{ s_2 - \frac { \mathcal{L} } 2 }^{ s_2 - \epsilon }
	+ \int_{ s_2 + \epsilon }^{ s_2 + \frac { \mathcal{L} } 2 } \right)
	\frac { \| \Delta \vectau \|_{ \mathbb{R}^n }^2 } { \Phi ( | \Delta s | ) }
	\, d s_1 d s_2
	+
	\int_{ \mathbb{R} / \mathcal{L} \mathbb{Z} }
	\int_{ s_2 - \epsilon }^{ s_2 + \epsilon }
	\frac { \| \Delta \vectau \|_{ \mathbb{R}^n }^2 } { \Phi ( | \Delta s | ) }
	\, d s_1 d s_2
\end{align*}
and then we have
\begin{align*}
	&
	\int_{ \mathbb{R} / \mathcal{L} \mathbb{Z} }
	\left( \int_{ s_2 - \frac { \mathcal{L} } 2 }^{ s_2 - \epsilon }
	+ \int_{ s_2 + \epsilon }^{ s_2 + \frac { \mathcal{L} } 2 } \right)
	\frac { \| \Delta \vectau \|_{ \mathbb{R}^n }^2 } { \Phi ( | \Delta s | ) }
	\, d s_1 d s_2
	\\
	& \quad
	\leqq
	\frac 2 { \Phi ( \epsilon ) }
	\iint_{ ( \mathbb{R} / \mathcal{L} \mathbb{Z} )^2 }
	\left( \| \vectau ( s_1 ) \|_{ \mathbb{R}^n }^2 + \| \vectau ( s_2 ) \|_{ \mathbb{R}^n }^2 \right)
	d s_1 d s_2
	=
	\frac { 4 \mathcal{L}^2 } { \Phi ( \epsilon ) }. 
\end{align*}
We deform like
\begin{align*}
	&
	\int_{ \mathbb{R} / \mathcal{L} \mathbb{Z} }
	\int_{ s_2 - \epsilon }^{ s_2 + \epsilon }
	\frac { \| \Delta \vectau \|_{ \mathbb{R}^n }^2 } { \Phi ( | \Delta s | ) }
	\, d s_1 d s_2
	\\
	& \quad
	\leqq
	\int_{ \mathbb{R} / \mathcal{L} \mathbb{Z} }
	\int_{ s_2 - \epsilon }^{ s_2 + \epsilon }
	\frac { \| \vectau ( s_1 ) - \vectau ( s_2 ) \|_{ \mathbb{R}^n }^2 } { \Phi ( | \Delta s | ) }
	\, d s_1 d s_2
	\\
	& \quad
	\leqq
	\frac C { \epsilon^2 }
	\int_{ \mathbb{R} / \mathcal{L} \mathbb{Z} }
	\int_{ s_2 - \epsilon }^{ s_2 + \epsilon }
	\frac 1 { \Phi ( | \Delta s | ) }
	\\
	& \quad \qquad
	\times
	\int_0^{ \frac 2 { \mathcal{L} } \epsilon }
	\int_0^{ \frac 2 { \mathcal{L} } \epsilon }
	\left(
	\| \vectau ( s_1 - ( \Delta s ) t_1 ) - \vectau ( s_2 + ( \Delta s ) t_2 ) \|_{ \mathbb{R}^n }^2
	+
	\| \vectau ( s_1 ) - \vectau ( s_1 - ( \Delta s ) t_1 ) \|_{ \mathbb{R}^n }^2
	\right.
	\\
	& \quad \qquad \qquad \qquad
	\left.
	+ \,
	\| \vectau ( s_2 + ( \Delta s ) t_2 ) - \vectau ( s_2 ) \|_{ \mathbb{R}^n }^2
	\right)
	d t_1 d t_2 d s_1 d s_2. 
\end{align*}
Let $ \epsilon > 0 $ be sufficiently small. 
Changing variables as $ s_3 =  s_1 - ( \Delta s ) t_1 $ and $ s_4 = s_2 + ( \Delta s ) t_2 $, we obtain
\begin{align*}
	&
	\frac C { \epsilon^2 }
	\int_{ \mathbb{R} / \mathcal{L} \mathbb{Z} }
	\int_{ s_2 - \epsilon }^{ s_2 + \epsilon }
	\frac 1 { \Phi ( | \Delta s | ) }
	\int_0^{ \frac 2 { \mathcal{L} } \epsilon }
	\int_0^{ \frac 2 { \mathcal{L} } \epsilon }
	\| \vectau ( s_1 - ( \Delta s ) t_1 ) - \vectau ( s_2 + ( \Delta s ) t_2 ) \|_{ \mathbb{R}^n }^2
	\,
	d t_1 d t_2 d s_1 d s_2
	\\
	& \quad
	\leqq
	\frac C { \epsilon^2 }
	\int_{ \mathbb{R} / \mathcal{L} \mathbb{Z} }
	\int_{ s_2 - \epsilon }^{ s_2 + \epsilon }
	\frac 1 { | \Delta s |^2 \Phi ( | \Delta s | ) }
	\int_{ s_1 }^{ s_2 } \int_{ s_1 }^{ s_2 }
	\| \vectau ( s_3 ) - \vectau ( s_4 ) \|_{ \mathbb{R}^n }^2
	d s_3 d s_4 d s_1 d s_2
	\\
	& \quad
	\leqq
	\frac C { \epsilon^2 }
	\iint_{ ( \mathbb{R} / \mathcal{L} \mathbb{Z} )^2 }
	\frac 1 { \| \Delta \vecf \|_{ \mathbb{R}^n }^2 \Phi ( | \Delta s | ) }
	\int_{ s_1 }^{ s_2 } \int_{ s_1 }^{ s_2 }
	\| \vectau ( s_3 ) - \vectau ( s_4 ) \|_{ \mathbb{R}^n }^2
	d s_3 d s_4 d s_1 d s_2. 
\end{align*}
A simple inequality 
\[
	\inf_{ x \in \left( 0 , \frac { \mathcal{L} } 2 \right] }
	\frac { x \Phi^\prime (x) } { \Phi (x) }
	\geqq
	C( \mathcal{L} ) > 0
\]
leads 
\begin{align*}
	&
	\frac C { \epsilon^2 }
	\iint_{ ( \mathbb{R} / \mathcal{L} \mathbb{Z} )^2 }
	\frac 1 { \| \Delta \vecf \|_{ \mathbb{R}^n }^2 \Phi ( | \Delta s | ) }
	\int_{ s_1 }^{ s_2 } \int_{ s_1 }^{ s_2 }
	\| \vectau ( s_3 ) - \vectau ( s_4 ) \|_{ \mathbb{R}^n }^2
	d s_3 d s_4 d s_1 d s_2
	\\
	& \quad
	\leqq
	\frac { C ( \mathcal{L} ) } { \epsilon^2 }
	\iint_{ ( \mathbb{R} / \mathcal{L} \mathbb{Z} )^2 }
	\frac { \Phi^\prime ( \| \Delta \vecf \|_{ \mathbb{R}^n } ) } { \| \Delta \vecf \|_{ \mathbb{R}^n } \Phi ( \| \Delta \vecf \|_{ \mathbb{R}^n } ) \Phi ( | \Delta s | )  }
	\int_{ s_1 }^{ s_2 } \int_{ s_1 }^{ s_2 }
	\| \vectau ( s_3 ) - \vectau ( s_4 ) \|_{ \mathbb{R}^n }^2
	d s_3 d s_4 d s_1 d s_2
	\\
	& \quad
	\leqq
	\frac { C ( \mathcal{L} ) } { \epsilon^2 }
	\mathcal{E}_\Phi ( \vecf )
\end{align*}
and then it holds that
\begin{align*}
	&
	\frac C { \epsilon^2 }
	\int_{ \mathbb{R} / \mathcal{L} \mathbb{Z} }
	\int_{ s_2 - \epsilon }^{ s_2 + \epsilon }
	\frac 1 { \Phi ( | \Delta s | ) }
	\int_0^{ \frac 2 { \mathcal{L} } \epsilon }
	\int_0^{ \frac 2 { \mathcal{L} } \epsilon }
	\| \vectau ( s_1 ) - \vectau ( s_1 - ( \Delta s ) t_1 ) \|_{ \mathbb{R}^n }^2
	d t_1 d t_2 d s_1 d s_2
	\\
	& \quad
	\leqq
	\frac { C ( \mathcal{L} ) } \epsilon
	\iint_{ ( \mathbb{R} / \mathcal{L} \mathbb{Z} )^2 }
	\int_0^{ \frac 2 { \mathcal{L} } \epsilon }
	\frac 1 { \Phi ( | \Delta s | ) }
	\| \vectau ( s_1 ) - \vectau ( s_1 - ( \Delta s ) t_1 ) \|_{ \mathbb{R}^n }^2
	d t_1 d s_1 d s_2
	\\
	& \quad
	=
	\frac { C ( \mathcal{L} ) } \epsilon
	\int_0^{ \frac 2 { \mathcal{L} } \epsilon }
	\iint_{ ( \mathbb{R} / \mathcal{L} \mathbb{Z} )^2 }
	\frac 1 { \Phi ( | \Delta s | ) }
	\| \vectau ( s_1 ) - \vectau ( s_1 - ( \Delta s ) t_1 ) \|_{ \mathbb{R}^n }^2
	d s_2 d s_1 d t_1
	\\
	& \quad
	=
	\frac { C ( \mathcal{L} ) } \epsilon
	\int_0^{ \frac 2 { \mathcal{L} } \epsilon }
	\int_{ \mathbb{R} / \mathcal{L} \mathbb{Z} }
	\int_{ s_1 - \frac { \mathcal{L} } 2 }^{ s_1 + \frac { \mathcal{L} } 2 }
	\frac { \Phi ( t_1 | \Delta s | ) } { \Phi ( | \Delta s | ) }
	\frac
	{ \| \vectau ( s_1 ) - \vectau ( s_1 - ( \Delta s ) t_1 ) \|_{ \mathbb{R}^n }^2 }
	{ \Phi ( t_1 | \Delta s | ) }
	d s_2 d s_1 d t_1
	\\
	& \quad
	=
	\frac { C ( \mathcal{L} ) } \epsilon
	\int_0^{ \frac 2 { \mathcal{L} } \epsilon }
	\int_{ \mathbb{R} / \mathcal{L} \mathbb{Z} }
	\int_{ - \frac { \mathcal{L} } 2 }^{ \frac { \mathcal{L} } 2 }
	\frac { \Phi ( t_1 | s_5 | ) } { \Phi ( | s_5 | ) }
	\frac
	{ \| \vectau ( s_1 ) - \vectau ( s_1 - s_5 t_1 ) \|_{ \mathbb{R}^n }^2 }
	{ \Phi ( t_1 | s_5 | ) }
	d s_5 d s_1 d t_1
	\\
	& \quad
	=
	\frac { C ( \mathcal{L} ) } \epsilon
	\int_0^{ \frac 2 { \mathcal{L} } \epsilon }
	\int_{ \mathbb{R} / \mathcal{L} \mathbb{Z} }
	\int_{ - \frac { \mathcal{L} } 2 t_1 }^{ \frac { \mathcal{L} } 2 t_1 }
	\frac { \Phi ( | u_1 | ) } { t_1 \Phi ( t_1^{-1} | u_1 | ) }
	\frac
	{ \| \vectau ( s_1 ) - \vectau ( s_1 - u_1 ) \|_{ \mathbb{R}^n }^2 }
	{ \Phi ( | u_1 | ) }
	d u_1 d s_1 d t_1
	\\
	& \quad
	\leqq
	\frac { C ( \mathcal{L} ) } \epsilon
	\int_0^{ \frac 2 { \mathcal{L} } \epsilon }
	\int_{ \mathbb{R} / \mathcal{L} \mathbb{Z} }
	\int_{ - \epsilon }^\epsilon
	\frac { \Phi ( | u_1 | ) } { t_1 \Phi ( t_1^{-1} | u_1 | ) }
	\frac
	{ \| \vectau ( s_1 ) - \vectau ( s_1 - u_1 ) \|_{ \mathbb{R}^n }^2 }
	{ \Phi ( | u_1 | ) }
	d u_1 d s_1 d t_1. 
\end{align*}
Since there exists $ \chi $ with
\[
	\sup_{ x \in \left( 0 , \frac { \mathcal{L} } 2 \right] }
	\frac { \Phi (x) } { t \Phi ( t^{-1} x ) }
	\leqq
	C( \mathcal{L} ) \chi (t) < \infty ,
	\quad
	\int_0^\epsilon \chi (t) \, dt = o ( \epsilon )
	\quad ( \epsilon \to + 0 )
\]
for sufficiently small $ t > 0 $, it holds that 
\begin{align*}
	&
	\frac { C ( \mathcal{L} ) } \epsilon
	\int_0^{ \frac 2 { \mathcal{L} } \epsilon }
	\int_{ \mathbb{R} / \mathcal{L} \mathbb{Z} }
	\int_{ - \epsilon }^\epsilon
	\frac { \Phi ( | u_1 | ) } { t_1 \Phi ( t_1^{-1} | u_1 | ) }
	\frac
	{ \| \vectau ( s_1 ) - \vectau ( s_1 - u_1 ) \|_{ \mathbb{R}^n }^2 }
	{ \Phi ( | u_1 | ) }
	d u_1 d s_1 d t_1
	\\
	& \quad
	\leqq
	\frac { C ( \mathcal{L} ) } \epsilon
	\int_0^{ \frac 2 { \mathcal{L} } \epsilon }
	\chi ( t ) \, dt
	\int_{ \mathbb{R} / \mathcal{L} \mathbb{Z} }
	\int_{ s_2 - \epsilon }^{ s_2 + \epsilon }
	\frac
	{ \| \Delta \vectau \|_{ \mathbb{R}^n }^2 }
	{ \Phi ( | \Delta s | ) }
	d s_1 d s_2. 
\end{align*}
In a similar way, we have 
\begin{align*}
	&
	\frac C { \epsilon^2 }
	\int_{ \mathbb{R} / \mathcal{L} \mathbb{Z} } 
	\int_{ s_2 - \epsilon }^{ s_2 + \epsilon }
	\frac 1 { \Phi ( | \Delta s | ) }
	\int_0^\epsilon \int_0^\epsilon
	\| \vectau ( s_2 + ( \Delta s ) t_2 ) - \vectau ( s_2 ) \|_{ \mathbb{R}^n }^2
	d t_1 d t_2 d s_1 d s_2
	\\
	& \quad
	\leqq
	\frac { C ( \mathcal{L} ) } \epsilon
	\int_0^{ \frac 2 { \mathcal{L} } \epsilon }
	\chi ( t ) \, dt
	\int_{ \mathbb{R} / \mathcal{L} \mathbb{Z} }
	\int_{ s_2 - \epsilon }^{ s_2 + \epsilon }
	\frac
	{ \| \Delta \vectau \|_{ \mathbb{R}^n }^2 }
	{ \Phi ( | \Delta s | ) }
	d s_1 d s_2
\end{align*}
and hence 
\[
	\left( 1 -
	\frac { C ( \mathcal{L} ) } \epsilon
	\int_0^{ \frac 2 { \mathcal{L} } \epsilon }
	\chi ( t ) \, dt
	\right)
	\iint_{ ( \mathbb{R} / \mathcal{L} \mathbb{Z} )^2 }
	\frac { \| \Delta \vectau \|_{ \mathbb{R}^n }^2 } { \Phi ( | \Delta s | ) }
	\, d s_1 d s_2
	\leqq
	C ( \mathcal{L} )
	\left( \frac { \mathcal{E}_\Phi ( \vecf ) } { \epsilon^2 }
	+
	\frac 1 { \Phi ( \epsilon ) }
	\right)
\]
is showed if 
\[
	\int_{ \mathbb{R} / \mathcal{L} \mathbb{Z} }
	\int_{ s_2 - \epsilon }^{ s_2 + \epsilon }
	\frac
	{ \| \Delta \vectau \|_{ \mathbb{R}^n }^2 }
	{ \Phi ( | \Delta s | ) }
	d s_1 d s_2
	<
	\infty. 
\]
As a consequence,
if $ \vecf $ is smooth and $ \mathcal{E}_\Phi ( \vecf ) < \infty $,
then,
by taking $ \epsilon $ sufficiently small,
it holds that $ \vecf \in W_\Phi $
Since $ C^\infty ( \mathbb{R} / \mathcal{L} ) $ is assumed to be dense in $ W_\Phi $,
$ \vecf \in W_\Phi $ follows from $ \mathcal{E}_\Phi ( \vecf ) < \infty $. 
\qed
\end{proof}
\par
It is simple to show that $ \Phi (x) = x^\alpha $ {\rm ($ \alpha \in [ 2 , 3 )$)} satisfy 
{\rm (A.1)},
{\rm (A.6)} and {\rm (A.7)}, and then we have the following corollary. 
\begin{cor}
We suppose 
{\rm (A.1)},
{\rm (A.6)} and {\rm (A.7)}. 
Then we have 
{\rm (A.3)}.
In particular,
{\rm (A.3)} holds when 
$ \Phi (x) = x^\alpha $ {\rm ($ \alpha \in [ 2 , 3 )$)}. 
\end{cor}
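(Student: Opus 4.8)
The plan is to read (A.3) as the conjunction of three assertions about a curve $\vecf$ with $\mathcal{E}_\Phi ( \vecf ) < \infty$: that $\vecf \in W_\Phi$, that $\vecf \in W^{ 1, \infty } ( \mathbb{R} / \mathcal{L} \mathbb{Z} )$, and that $\vecf$ is bi-Lipschitz. The three propositions immediately preceding this corollary already supply exactly these conclusions, each under the appropriate subset of the hypotheses, so the corollary should follow simply by chaining them in the right logical order. First I would invoke the proposition assuming (A.1) and (A.7) to pass from $\mathcal{E}_\Phi ( \vecf ) < \infty$ to $\vecf \in W_\Phi$; this is the step that actually uses the full strength of (A.7). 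With $\vecf \in W_\Phi$ now available, I would feed it together with the standing hypothesis $\mathcal{E}_\Phi ( \vecf ) < \infty$ into the bi-Lipschitz proposition (assuming (A.1) and (A.6)) to conclude that $\vecf$ is bi-Lipschitz. Finally, since we work throughout with the arc-length parametrization, $\| \vecf^\prime \|_{ \mathbb{R}^n } = 1$ almost everywhere, so membership in $W^{ 1, \infty }$ is automatic. Assembling the three gives (A.3).

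For the ``in particular'' claim I would verify (A.1), (A.6) and (A.7) directly for $\Phi (x) = x^\alpha$ with $\alpha \in [ 2 , 3 )$. Monotonicity (A.1) is clear, and (A.6) holds because $x^\alpha / x^2 = x^{ \alpha - 2 }$ stays bounded as $x \to + 0$ precisely when $\alpha \geqq 2$. For (A.7) the three analytic bullets reduce to one-line computations: $\Phi ( \sqrt x ) = x^{ \alpha / 2 }$ is convex exactly when $\alpha \geqq 2$; the logarithmic-derivative quantity is the constant $x \Phi^\prime (x) / \Phi (x) = \alpha > 0$, so its infimum over $\left( 0 , \frac { \mathcal{L} } 2 \right]$ is positive; and $\Phi (x) / ( t \Phi ( t^{-1} x ) ) = t^{ \alpha - 1 }$, so one may take $\chi (t) = t^{ \alpha - 1 }$, for which $\int_0^\epsilon \chi (t) \, dt = \epsilon^\alpha / \alpha = o ( \epsilon )$ as $\epsilon \to + 0$ because $\alpha > 1$.

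The one bullet of (A.7) that is not a one-line computation is the density of $C^\infty ( \mathbb{R} / \mathcal{L} \mathbb{Z} )$ in $W_\Phi$, and this is where I expect the real (though still routine) work to sit. For $\Phi (x) = x^\alpha$ the defining seminorm of $W_\Phi$ is the Gagliardo seminorm $\iint \| \vecf^\prime ( s_1 ) - \vecf^\prime ( s_2 ) \|_{ \mathbb{R}^n }^2 / | s_1 - s_2 |^\alpha \, d s_1 d s_2$, so that $W_\Phi$ coincides with the fractional Sobolev space $H^{ ( \alpha + 1 ) / 2 } ( \mathbb{R} / \mathcal{L} \mathbb{Z} )$ with smoothness index $( \alpha + 1 ) / 2 \in \left[ \frac 32 , 2 \right)$. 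On the circle, smooth functions are dense in every such $H^s$ by a standard Fourier-truncation (or mollification) argument, and I would make this identification and invoke the standard density rather than reprove it.

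I do not anticipate a deep obstacle here: the corollary is essentially an assembly of the three preceding propositions, and the power-law verification is elementary apart from the density statement. The only point requiring a little care is to make sure the hypotheses are threaded through in the correct order, namely that $\vecf \in W_\Phi$ must be established \emph{before} the bi-Lipschitz proposition can be applied, since that proposition takes $\vecf \in W_\Phi$ as an input; getting this dependency right is what makes the chaining valid.
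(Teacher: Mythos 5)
Your proposal is correct and matches the paper's (implicit) argument: the corollary is exactly the assembly of the two preceding propositions — first passing from $\mathcal{E}_\Phi(\vecf)<\infty$ to $\vecf\in W_\Phi$ via (A.1) and (A.7), then to bi-Lipschitz via (A.1) and (A.6), with $W^{1,\infty}$ free from the arc-length parametrization — and the paper likewise leaves the power-law verification as a routine check, which you carry out correctly (including the identification of $W_{x^\alpha}$ with $H^{(\alpha+1)/2}$ for the density bullet). Your remark about the order of dependencies (establishing $\vecf\in W_\Phi$ before invoking the bi-Lipschitz proposition) is exactly the point that makes the chaining valid.
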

We assume that
\begin{itemize} 
\item[{\rm (A.8)}]
	\begin{itemize}
	\item[$ \bullet $]
		$ \Phi \in C^1 ( 0 , \infty ) $,
		$ \displaystyle{ \frac 1 { \Phi ( \sqrt x ) } } $ is convex. 
	\item[$ \bullet $]
		There exists $ C( \mathcal{L} ) > 0 $ with $ \displaystyle{ \sup_{ x \in \left( 0 , \frac { \mathcal{L} } 2 \right] }
		\frac { \Phi^\prime (x) } x < C( \mathcal{L} ) } $. 
	\end{itemize}
\end{itemize}
\begin{prop}
Supposing {\rm (A.1)} and {\rm (A.8)}, we have $ \mathcal{E}_\Phi ( \vecf ) < \infty $ if $ \vecf \in W_\Phi $. 
\end{prop}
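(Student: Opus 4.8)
The plan is to dominate the (non-negative) integrand of $\mathcal{E}_\Phi(\vecf)$ by a kernel that the $W_\Phi$-seminorm already controls, and then to recover that seminorm by Fubini. Writing
\[
	\mathcal{E}_\Phi ( \vecf )
	=
	\int_{ \mathbb{R} / \mathcal{L} \mathbb{Z} }
	\int_{ - \frac { \mathcal{L} } 2 }^{ \frac { \mathcal{L} } 2 }
	\left(
	\frac 1 { \Phi ( \| \Delta \vecf \|_{ \mathbb{R}^n } ) }
	-
	\frac 1 { \Phi ( | \Delta s | ) }
	\right)
	d s_2 \, d s_1 ,
\]
I first exploit the first bullet of (A.8): since $x \mapsto 1 / \Phi ( \sqrt x )$ is convex and (by (A.1)) decreasing, its tangent line at the left endpoint $x = \| \Delta \vecf \|_{ \mathbb{R}^n }^2$ lies below the graph, and evaluating the resulting inequality at $x = | \Delta s |^2 \geqq \| \Delta \vecf \|_{ \mathbb{R}^n }^2$ yields the pointwise bound
\[
	\frac 1 { \Phi ( \| \Delta \vecf \|_{ \mathbb{R}^n } ) }
	-
	\frac 1 { \Phi ( | \Delta s | ) }
	\leqq
	\frac { \Phi^\prime ( \| \Delta \vecf \|_{ \mathbb{R}^n } ) }
	{ 2 \| \Delta \vecf \|_{ \mathbb{R}^n } \Phi ( \| \Delta \vecf \|_{ \mathbb{R}^n } )^2 }
	\left( | \Delta s |^2 - \| \Delta \vecf \|_{ \mathbb{R}^n }^2 \right) .
\]
It is essential to keep the factor $\Phi^\prime ( \| \Delta \vecf \|_{ \mathbb{R}^n } ) / \| \Delta \vecf \|_{ \mathbb{R}^n }$ intact and \emph{not} to replace it by the constant $C ( \mathcal{L} )$ of the second bullet of (A.8): for $\Phi (x) = x^\alpha$ with $\alpha > 2$ this factor tends to $0$ along the diagonal, and discarding it over-estimates the integrand (the right circle already shows such a crude bound diverges once $\alpha \geqq 5/2$, whereas the true energy stays finite for $\alpha < 3$).

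Next I rewrite the gap exactly as $| \Delta s |^2 - \| \Delta \vecf \|_{ \mathbb{R}^n }^2 = \frac 12 \int_{ s_1 }^{ s_2 } \int_{ s_1 }^{ s_2 } \| \vectau ( s_3 ) - \vectau ( s_4 ) \|_{ \mathbb{R}^n }^2 \, d s_3 \, d s_4$, exactly as in the proof of the preceding proposition. To strip the curve-dependence from the prefactor I use that $\vecf$ is bi-Lipschitz. The second bullet of (A.8) forces $\Phi (x) \leqq \frac 12 C ( \mathcal{L} ) x^2$ near $0$ (we may assume $\Phi ( + 0 ) = 0$, since otherwise $1 / \Phi$ is bounded and $\mathcal{E}_\Phi ( \vecf ) \leqq \mathcal{L}^2 / \Phi ( + 0 ) < \infty$ trivially); this is precisely the $\mathcal{O} ( x^2 )$ hypothesis that drives the averaging argument of the earlier proposition to give $\| \Delta \vecf \|_{ \mathbb{R}^n } \geqq \frac 12 | \Delta s |$ near the diagonal for $\vecf \in W_\Phi$, while injectivity together with compactness controls the range $\mathrm{dist}_{ \mathbb{R} / \mathcal{L} \mathbb{Z} } ( s_1 , s_2 ) \geqq 2 \delta$. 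Because convexity of $1 / \Phi ( \sqrt \cdot )$ makes $\Phi^\prime (r) / ( 2 r \Phi (r)^2 )$ monotonically decreasing in $r$, the lower bound $\| \Delta \vecf \|_{ \mathbb{R}^n } \geqq \lambda | \Delta s |$ lets me replace the prefactor by the kernel $\Phi^\prime ( \lambda | \Delta s | ) / ( 4 \lambda | \Delta s | \Phi ( \lambda | \Delta s | )^2 )$, which now depends on $| \Delta s |$ alone.

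Finally I interchange the order of integration, pushing the inner double integral over $[ s_1 , s_2 ]^2$ to the outside: for fixed $s_3 < s_4$ with $e = s_4 - s_3$ the outer variables range over $s_1 \leqq s_3$, $s_2 \geqq s_4$, $s_2 - s_1 \leqq \mathcal{L} / 2$, and integrating the $| \Delta s |$-kernel there — conveniently by parts against the antiderivative $- 1 / \Phi ( \lambda t )$ — returns a weight of the form $\int_e^{ \mathcal{L} / 2 } \frac { e } { t^2 \Phi ( \lambda t ) } \, dt$. This collapses the quadruple integral into a constant multiple of $\iint \| \vectau ( s_3 ) - \vectau ( s_4 ) \|_{ \mathbb{R}^n }^2 / \Phi ( \lambda | s_3 - s_4 | ) \, d s_3 \, d s_4$, which is finite because $\vecf \in W_\Phi$ (for $\Phi (x) = x^\alpha$ the scaling $\Phi ( \lambda x ) = \lambda^\alpha \Phi (x)$ matches this to the $W_\Phi$-seminorm exactly); splitting off the far region $\mathrm{dist}_{ \mathbb{R} / \mathcal{L} \mathbb{Z} } \geqq 2 \delta$, where the integrand is bounded, then gives $\mathcal{E}_\Phi ( \vecf ) < \infty$. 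The main obstacle is exactly this Fubini/kernel estimate: one must show that integrating the sharp kernel over the wedge $\{ s_1 \leqq s_3 , \, s_2 \geqq s_4 \}$ reproduces the weight $1 / \Phi ( | s_3 - s_4 | )$ with the correct power, and it is here that the undiscarded $\Phi^\prime$-factor and the two bullets of (A.8) must be used in tandem, since any premature use of $\Phi^\prime (x) / x < C ( \mathcal{L} )$ loses a factor and breaks the estimate for $\alpha > 2$. A secondary technical point is the rigorous near-diagonal bi-Lipschitz bound for merely $W_\Phi$ (not smooth) curves, which I import from the averaging argument of the earlier proposition.
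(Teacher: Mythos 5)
Your first two steps coincide exactly with the paper's proof: it too writes $\mathcal{E}_\Phi(\vecf)=\iint\int_{\|\Delta\vecf\|_{\mathbb{R}^n}^2}^{\mathscr{D}(\vecf(s_1),\vecf(s_2))^2}\bigl(-\tfrac{d}{dx}\Phi(\sqrt{x})^{-1}\bigr)\,dx\,ds_1ds_2$, uses convexity of $1/\Phi(\sqrt{\cdot})$ to bound the inner integrand by its value at the left endpoint, and expands $|\Delta s|^2-\|\Delta\vecf\|_{\mathbb{R}^n}^2$ as $\tfrac12\int_{s_1}^{s_2}\int_{s_1}^{s_2}\|\vectau(s_3)-\vectau(s_4)\|_{\mathbb{R}^n}^2\,ds_3ds_4$. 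You part ways at the final domination. The paper trades $1/\Phi(\|\Delta\vecf\|_{\mathbb{R}^n})$ for $1/\Phi(|s_3-s_4|)$ inside the inner integral, bounds the leftover prefactor by the constant $\sup\Phi'(x)/x\leqq C(\mathcal{L})$, and enlarges the inner domain to all of $(\mathbb{R}/\mathcal{L}\mathbb{Z})^2$, reaching $C(\mathcal{L})\mathcal{L}^2$ times the $W_\Phi$-seminorm in three lines, with no bi-Lipschitz estimate and no Fubini. You instead keep the sharp kernel $\Phi'(r)/(2r\Phi(r)^2)$, freeze it at $r=\lambda|\Delta s|$ using the near-diagonal bi-Lipschitz bound (legitimately imported: its proof uses only $\vecf\in W_\Phi$ and the $\mathcal{O}(x^2)$ bound you extract from the second bullet of (A.8)) together with the monotonicity of the kernel coming from convexity, and then integrate the kernel out over the wedge $\{s_1\leqq s_3,\,s_2\geqq s_4\}$; your weight $\int_e^{\mathcal{L}/2}e\,t^{-2}\Phi(\lambda t)^{-1}\,dt\leqq\Phi(\lambda e)^{-1}$ checks out and does collapse the quadruple integral onto $\iint\|\Delta\vectau\|_{\mathbb{R}^n}^2/\Phi(\lambda|s_3-s_4|)\,ds_3ds_4$. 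Your route is longer but tighter, and your distrust of the crude constant bound is not groundless: differentiating $1/\Phi(\sqrt{x})$ genuinely produces $\Phi(\|\Delta\vecf\|_{\mathbb{R}^n})^2$ in the denominator, while the paper's chain carries only a single power, and its substitution $1/\Phi(\|\Delta\vecf\|_{\mathbb{R}^n})\leqq1/\Phi(|s_3-s_4|)$ runs against monotonicity (since $|s_3-s_4|$ may exceed $\|\Delta\vecf\|_{\mathbb{R}^n}$) unless one also invokes bi-Lipschitzness and a doubling bound. Your argument supplies exactly the ingredients needed to close that gap.

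Two caveats. First, your last step needs $\Phi(\lambda x)\geqq C(\lambda,\mathcal{L})\Phi(x)$ to pass from $\iint\|\Delta\vectau\|_{\mathbb{R}^n}^2/\Phi(\lambda|s_3-s_4|)$ to the $W_\Phi$-seminorm; you verify this only for $\Phi(x)=x^\alpha$. This is condition (A.5)(a), which is not among the hypotheses (A.1), (A.8) and does not follow from them in general; the paper's argument has the same hidden dependence, so this is a shared rather than a new defect, but it should be recorded as an extra assumption (or the statement restricted accordingly). Second, your ``diverges once $\alpha\geqq5/2$'' criticism addresses discarding $\Phi'(r)/r$ while keeping the unweighted inner integral $\iint\|\Delta\vectau\|_{\mathbb{R}^n}^2$; the paper applies the constant bound only after reweighting the inner integral by $1/\Phi(|s_3-s_4|)$, so the obstruction it actually faces is the monotonicity direction just described, not the $5/2$ threshold.
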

\begin{proof}
It holds that 
\begin{align*}
	\mathcal{E}_\Phi ( \vecf )
	= & \
	\iint_{ ( \mathbb{R} / \mathcal{L} \mathbb{Z} )^2 }
	\int_{ \mathscr{D} ( \mbox{\boldmath\scriptsize $ f $} ( s_1 ) , \mbox{\boldmath\scriptsize $ f $} ( s_2 ) )^2 }^{ \| \Delta \mbox{\boldmath\scriptsize $ f $} \|_{ \mathbb{R}^n }^2 }
	\frac d { dx } \frac 1 { \Phi ( \sqrt x ) }
	\, dx d s_1 d s_2
	\\
	= & \
	\iint_{ ( \mathbb{R} / \mathcal{L} \mathbb{Z} )^2 }
	\int_{ \| \Delta \mbox{\boldmath\scriptsize $ f $} \|_{ \mathbb{R}^n }^2 }^{ \mathscr{D} ( \mbox{\boldmath\scriptsize $ f $} ( s_1 ) , \mbox{\boldmath\scriptsize $ f $} ( s_2 ) )^2 }
	\left( - \frac d { dx } \frac 1 { \Phi ( \sqrt x ) } \right)
	\, dx d s_1 d s_2. 
\end{align*}
From the assumption of $ \Phi $, 
\[
	\left. \frac d { dx } \frac 1 { \Phi ( \sqrt x ) } \right|_{ x = \| \Delta \mbox{\boldmath\scriptsize $ f $} \|_{ \mathbb{R}^n }^2 }
	\leqq
	\frac d { dx } \frac 1 { \Phi ( \sqrt x ) }
	\leqq 0
\]
holds when $ x \in \left[ \| \Delta \vecf \|_{ \mathbb{R}^n }^2 , \mathscr{D} ( \vecf ( s_1 ) , \vecf ( s_2 ) )^2 \right] $ and 
\begin{align*}
	\mathcal{E}_\Phi ( \vecf )
	\leqq & \
	\iint_{ ( \mathbb{R} / \mathcal{L} \mathbb{Z} )^2 }
	\int_{ \| \Delta \mbox{\boldmath\scriptsize $ f $} \|_{ \mathbb{R}^n }^2 }^{ \mathscr{D} ( \mbox{\boldmath\scriptsize $ f $} ( s_1 ) , \mbox{\boldmath\scriptsize $ f $} ( s_2 ) )^2 }
	\left. \left( - \frac d { dx } \frac 1 { \Phi ( \sqrt x ) } \right) \right|_{ x = \| \Delta \mbox{\boldmath\scriptsize $ f $} \|_{ \mathbb{R}^n }^2 }
	\, dx d s_1 d s_2
	\\
	= & \
	\frac 12
	\iint_{ ( \mathbb{R} / \mathcal{L} \mathbb{Z} )^2 }
	\frac { \Phi^\prime ( \| \Delta \vecf \|_{ \mathbb{R}^n } ) }
	{ \| \Delta \vecf \|_{ \mathbb{R}^n } \Phi ( \| \Delta \vecf \|_{ \mathbb{R}^n } ) }
	\left( { \mathscr{D} ( \vecf ( s_1 ) , \vecf ( s_2 ) )^2 } - \| \Delta \vecf \|_{ \mathbb{R}^n }^2 \right)
	d s_1 d s_2
	\\
	= & \
	\frac 14
	\iint_{ ( \mathbb{R} / \mathcal{L} \mathbb{Z} )^2 }
	\frac { \Phi^\prime ( \| \Delta \vecf \|_{ \mathbb{R}^n } ) }
	{ \| \Delta \vecf \|_{ \mathbb{R}^n } \Phi ( \| \Delta \vecf \|_{ \mathbb{R}^n } ) }
	\left(
	\int_{ s_1 }^{ s_2 } \int_{ s_1 }^{ s_2 }
	\| \vectau ( s_3 ) - \vectau ( s_4 ) \|_{ \mathbb{R}^n }^2
	d s_3 d s_4
	\right)
	d s_1 d s_2
\end{align*}
holds. 
From $ | s_3 - s_4 | \leqq | \Delta s | $, we have
\[
	\mathcal{E}_\Phi ( \vecf )
	\leqq
	\frac 14
	\iint_{ ( \mathbb{R} / \mathcal{L} \mathbb{Z} )^2 }
	\frac { \Phi^\prime ( \| \Delta \vecf \|_{ \mathbb{R}^n } ) }
	{ \| \Delta \vecf \|_{ \mathbb{R}^n } }
	\left(
	\int_{ s_1 }^{ s_2 } \int_{ s_1 }^{ s_2 }
	\frac { \| \vectau ( s_3 ) - \vectau ( s_4 ) \|_{ \mathbb{R}^n }^2 } { \Phi ( | s_3 - s_4 | ) }
	d s_3 d s_4
	\right)
	d s_1 d s_2. 
\]
If 
\[
	\sup_{ x \in \left( 0 , \frac { \mathcal{L} } 2 \right] }
	\frac { \Phi^\prime (x) } x
	\leqq
	C( \mathcal{L} )
\]
holds, then it is showed that 
\[
	\mathcal{E}_\Phi ( \vecf )
	\leqq
	C( \mathcal{L} ) \mathcal{L}^2
	\iint_{ ( \mathbb{R} / \mathcal{L} \mathbb{Z} )^2 }
	\frac { \| \vectau ( s_3 ) - \vectau ( s_4 ) \|_{ \mathbb{R}^n }^2 } { \Phi ( | s_3 - s_4 | ) }
	\, d s_3 d s_4. 
\]
\qed
\end{proof}
\par
Lastly, we give a sufficient condition of $ \Phi $ for (A.4). 
\begin{itemize}
\item[{\rm (A.9)}]
	For any $ \lambda \in ( 0,1 ) $, we assume that
	\[
		\limsup_{ \epsilon \to + 0 }
		\epsilon^2
		\sup_{ x \in [ \lambda^2 \epsilon^2 , \epsilon^2 ] }
		\left| \frac d { dx } \Lambda ( \sqrt x ) \right|
		\sup_{ y \in ( 0 , \epsilon ] } \Phi (y) < \infty .
	\]
\end{itemize}
\begin{prop}
We suppose that 
{\rm (A.1)},
{\rm (A.2)} and {\rm (A.9)}. 
If $ \vecf \in W_\Phi $ and $ \vecf $ is bi-Lipschitz,
then $ (\ast) $ of {\rm (A.4)} holds. 
\end{prop}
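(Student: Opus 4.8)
The plan is to reduce $(\ast)$ to the absolute continuity of the $W_\Phi$-seminorm of $\vecf$ near the diagonal, with the two singular factors controlled exactly by (A.9). Fix $\epsilon \in (0, \mathcal{L}/2)$ and write $d(s_1) = \| \vecf(s_1) - \vecf(s_1+\epsilon) \|_{\mathbb{R}^n}$. Since $\vecf$ is arc-length parametrized and bi-Lipschitz, there is a $\lambda \in (0,1)$ with $\lambda \epsilon \le d(s_1) \le \epsilon$, hence $d(s_1)^2 \in [\lambda^2\epsilon^2, \epsilon^2]$. First I would express the $\Lambda$-difference as an integral in the variable $x = (\cdot)^2$: because $d(s_1) \le \epsilon$,
\[
	\left| \Lambda(d(s_1)) - \Lambda(\epsilon) \right|
	=
	\left| \int_{d(s_1)^2}^{\epsilon^2} \frac{d}{dx}\Lambda(\sqrt{x})\,dx \right|
	\le
	\left( \sup_{x \in [\lambda^2\epsilon^2, \epsilon^2]} \left| \frac{d}{dx}\Lambda(\sqrt{x}) \right| \right)(\epsilon^2 - d(s_1)^2).
\]
This already isolates the first supremum appearing in (A.9).

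Next I would convert $\epsilon^2 - d(s_1)^2$ into a tangent-difference integral. Since $\| \vectau \|_{\mathbb{R}^n} \equiv 1$ and $\vecf(s_1+\epsilon) - \vecf(s_1) = \int_{s_1}^{s_1+\epsilon} \vectau\,ds$, the same computation as in the preceding proposition gives
\[
	\epsilon^2 - d(s_1)^2
	=
	\frac12 \int_{s_1}^{s_1+\epsilon} \int_{s_1}^{s_1+\epsilon} \| \vectau(s_3) - \vectau(s_4) \|_{\mathbb{R}^n}^2 \, ds_3 \, ds_4.
\]
Because $|s_3 - s_4| \le \epsilon$ on this square and $\Phi$ is increasing by (A.1), I would insert $\Phi(|s_3-s_4|)$ to pass to the $W_\Phi$-integrand,
\[
	\| \vectau(s_3) - \vectau(s_4) \|_{\mathbb{R}^n}^2
	\le
	\left( \sup_{y \in (0,\epsilon]} \Phi(y) \right)
	\frac{\| \vectau(s_3) - \vectau(s_4) \|_{\mathbb{R}^n}^2}{\Phi(|s_3 - s_4|)},
\]
so the second supremum of (A.9) emerges. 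Integrating the resulting estimate in $s_1$, the crucial step is a Fubini rearrangement of the triple integral $\int_{\mathbb{R}/\mathcal{L}\mathbb{Z}} \int_{s_1}^{s_1+\epsilon} \int_{s_1}^{s_1+\epsilon}(\cdots)\,ds_3\,ds_4\,ds_1$: for fixed $(s_3,s_4)$ the admissible $s_1$ fill an interval of length $\epsilon - |s_3-s_4| \le \epsilon$, whence this triple integral is at most $\epsilon\,I(\epsilon)$, where $I(\epsilon) = \iint_{|s_3-s_4|\le\epsilon} \frac{\| \vectau(s_3) - \vectau(s_4) \|_{\mathbb{R}^n}^2}{\Phi(|s_3-s_4|)}\,ds_3\,ds_4$.

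Collecting the bounds then yields
\[
	\epsilon \left| \int_{\mathbb{R}/\mathcal{L}\mathbb{Z}} \left( \Lambda(d(s_1)) - \Lambda(\epsilon) \right) ds_1 \right|
	\le
	\frac12 \left( \epsilon^2 \sup_{x \in [\lambda^2\epsilon^2, \epsilon^2]} \left| \frac{d}{dx}\Lambda(\sqrt{x}) \right| \sup_{y \in (0,\epsilon]} \Phi(y) \right) I(\epsilon).
\]
By (A.9) the bracketed quantity stays bounded as $\epsilon \to +0$; and since $\vecf \in W_\Phi$ forces the integrand of $I(\epsilon)$ to be integrable over $\{ \mathrm{dist}_{\mathbb{R}/\mathcal{L}\mathbb{Z}}(s_3,s_4) \le \mathcal{L}/2 \}$, absolute continuity of the Lebesgue integral over the shrinking set $\{|s_3-s_4| \le \epsilon\}$ gives $I(\epsilon) \to 0$. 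Hence the right-hand side vanishes and $(\ast)$ follows.

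I expect the main obstacle to be the bookkeeping in the Fubini step: one must recognize that the measure of admissible $s_1$ is precisely $\epsilon - |s_3-s_4|$, and thereby extract the extra factor $\epsilon$ that, together with the prefactor $\epsilon$ and the tangent integral representing $\epsilon^2 - d^2$, supplies exactly the $\epsilon^2$ needed to balance the two singular suprema of (A.9). Everything else is a direct chain of inequalities.
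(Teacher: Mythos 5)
Your proposal is correct and follows essentially the same route as the paper's proof: the integral representation of $\Lambda(d)-\Lambda(\epsilon)$ via $\frac{d}{dx}\Lambda(\sqrt{x})$, the identity $\epsilon^2-d^2=\tfrac12\iint\|\Delta\vectau\|^2$, the insertion of $\Phi(|s_3-s_4|)$, the Fubini step producing the extra factor $\epsilon$, and the conclusion by (A.9) together with absolute continuity of the $W_\Phi$-integral. Your bookkeeping of the admissible $s_1$-interval of length $\epsilon-|s_3-s_4|$ is in fact a slightly cleaner statement of the paper's change of order of integration.
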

\begin{proof}
Since 
\[
	\Lambda (x) - \Lambda (y)
	=
	\int_{ y^2 }^{ x^2 } \frac d { dt } \Lambda ( \sqrt t ) \, dt
\]
follows for $ x \geqq y > 0 $, it holds that 
\[
	\left| \Lambda (x) - \Lambda (y) \right|
	=
	\sup_{ t \in [ y^2 , x^2 ] } \left| \frac d { dt } \Lambda ( \sqrt x ) \right| \left| x^2 - y^2 \right|. 
\]
From the bi-Lipschitz estimate, there exists a positive constant $ \lambda \in ( 0,1 ) $ independent  of $ s_1 $ and $ s_2 $ and it holds that
\[
	\lambda \mathscr{D} ( \vecf ( s_1 ) , \vecf ( s_2 ) )
	\leqq
	\| \Delta \vecf \|_{ \mathbb{R}^n }
	\leqq
	\mathscr{D} ( \vecf ( s_1 ) , \vecf ( s_2 ) ). 
\]
From 
\[
	\lambda^2 \epsilon^2
	\leqq
	\| \vecf ( s_1 + \epsilon ) - \vecf ( s_1 ) \|_{ \mathbb{R}^n }^2
	\leqq
	\epsilon^2, 
\]
\begin{align*}
	&
	\left| \Lambda ( \| \vecf ( s_1 + \epsilon ) - \vecf ( s_1 ) \|_{ \mathbb{R}^n } ) - \Lambda ( \epsilon ) \right|
	\\
	& \quad
	\leqq
	\sup_{ x \in [ \lambda^2 \epsilon^2 , \epsilon^2 ] } \left| \frac d { dx } \Lambda ( \sqrt x ) \right|
	\left( \epsilon^2 -  \| \vecf ( s_1 + \epsilon ) - \vecf ( s_1 ) \|_{ \mathbb{R}^n }^2 \right)
	\\
	& \quad
	=
	\sup_{ x \in [ \lambda^2 \epsilon^2 , \epsilon^2 ] } \left| \frac d { dx } \Lambda ( \sqrt x ) \right|
	\int_{ s_1 }^{ s_1 + \epsilon }
	\int_{ s_1 }^{ s_1 + \epsilon }
	\left( 1 - \vectau ( s_3 ) \cdot \vectau ( s_4 ) \right) d s_3 d s_4
	\\
	& \quad
	=
	\frac 12
	\sup_{ x \in [ \lambda^2 \epsilon^2 , \epsilon^2 ] } \left| \frac d { dx } \Lambda ( \sqrt x ) \right|
	\int_{ s_1 }^{ s_1 + \epsilon }
	\int_{ s_1 }^{ s_1 + \epsilon }
	\left\| \vectau ( s_3 ) - \vectau ( s_4 ) \right\|_{ \mathbb{R}^n }^2 d s_3 d s_4
\end{align*}
holds and hence we have 
\begin{align*}
	&
	\epsilon
	\int_{ \mathbb{R} / \mathcal{L} \mathbb{Z} }
	\left(
	\Lambda ( \| \vecf ( s_1 ) - \vecf ( s_1 + \epsilon ) \|_{ \mathbb{R}^n } )
	-
	\Lambda ( \epsilon )
	\right)
	d s_1
	\\
	\leqq & \
	\frac \epsilon 2
	\sup_{ x \in [ \lambda^2 \epsilon^2 , \epsilon^2 ] } \left| \frac d { dx } \Lambda ( \sqrt x ) \right|
	\int_{ \mathbb{R} / \mathcal{L} \mathbb{Z} }
	\int_{ s_1 }^{ s_1 + \epsilon }
	\int_{ s_1 }^{ s_1 + \epsilon }
	\left\| \vectau ( s_3 ) - \vectau ( s_4 ) \right\|_{ \mathbb{R}^n }^2 d s_3 d s_4 d s_1. 
\end{align*}
Changing order of integration, we have
\begin{align*}
	&
	\epsilon
	\int_{ \mathbb{R} / \mathcal{L} \mathbb{Z} }
	\left(
	\Lambda ( \| \vecf ( s_1 ) - \vecf ( s_1 + \epsilon ) \|_{ \mathbb{R}^n } )
	-
	\Lambda ( \epsilon )
	\right)
	d s_1
	\\
	\leqq & \
	\frac \epsilon 2
	\sup_{ x \in [ \lambda^2 \epsilon^2 , \epsilon^2 ] } \left| \frac d { dx } \Lambda ( \sqrt x ) \right|
	\int_{ \mathbb{R} / \mathcal{L} \mathbb{Z} }
	\int_{ s_4 - \epsilon }^{ s_4 + \epsilon }
	\int_{ s_3 - \epsilon  }^{ s_3 }
	\left\| \vectau ( s_3 ) - \vectau ( s_4 ) \right\|_{ \mathbb{R}^n }^2 d s_1 d s_3 d s_4
	\\
	= & \
	\frac { \epsilon^2 } 2
	\sup_{ x \in [ \lambda^2 \epsilon^2 , \epsilon^2 ] } \left| \frac d { dx } \Lambda ( \sqrt x ) \right|
	\int_{ \mathbb{R} / \mathcal{L} \mathbb{Z} }
	\int_{ s_4 - \epsilon }^{ s_4 + \epsilon }
	\left\| \vectau ( s_3 ) - \vectau ( s_4 ) \right\|_{ \mathbb{R}^n }^2 d s_3 d s_4
	\\
	\leqq & \
	\frac { \epsilon^2 } 2
	\sup_{ x \in [ \lambda^2 \epsilon^2 , \epsilon^2 ] } \left| \frac d { dx } \Lambda ( \sqrt x ) \right|
	\sup_{ y \in ( 0 , \epsilon ] } \Phi (y)
	\int_{ \mathbb{R} / \mathcal{L} \mathbb{Z} }
	\int_{ s_4 - \epsilon }^{ s_4 + \epsilon }
	\frac { \left\| \vectau ( s_3 ) - \vectau ( s_4 ) \right\|_{ \mathbb{R}^n }^2 }
	{ \Phi ( | s_3 - s_4 | ) }
	d s_3 d s_4
	\\
	\leqq & \
	C ( \lambda )
	\int_{ \mathbb{R} / \mathcal{L} \mathbb{Z} }
	\int_{ s_4 - \epsilon }^{ s_4 + \epsilon }
	\frac { \left\| \vectau ( s_3 ) - \vectau ( s_4 ) \right\|_{ \mathbb{R}^n }^2 }
	{ \Phi ( | s_3 - s_4 | ) }
	d s_3 d s_4. 
\end{align*}
From absolute integrability of integration, 
\[
	\lim_{ \epsilon \to + 0 }
	\int_{ \mathbb{R} / \mathcal{L} \mathbb{Z} }
	\int_{ s_4 - \epsilon }^{ s_4 + \epsilon }
	\frac { \left\| \vectau ( s_3 ) - \vectau ( s_4 ) \right\|_{ \mathbb{R}^n }^2 }
	{ \Phi ( | s_3 - s_4 | ) }
	d s_3 d s_4
	= 0
\]
holds if $ \vecf \in W_\Phi $. 
\qed
\end{proof}
\begin{itemize}
\item[{\rm (A.10)}]
	For any $ \lambda \in ( 0,1 ) $, we assume that 
	$ \displaystyle{
		\limsup_{ \epsilon \to + 0 }
		\sup_{ x \in ( 0 , \epsilon ] } \Phi (x)
		\sup_{ y \in ( \lambda \epsilon , \epsilon ] } | \Lambda (y) | } < \infty $. 
\end{itemize}
\begin{prop}
We suppose that {\rm (A.1)},
{\rm (A.2)} and {\rm (A.10)}. 
If $ \vecf \in W_\Phi $ and $ \vecf $ is bi-Lipschitz,
then $ (\dagger) $ of {\rm (A.4)} holds. 
\end{prop}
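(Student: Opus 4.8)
The plan is to bound the integrand in absolute value and then reduce the assertion to the vanishing of a diagonal tail of the $W_\Phi$-seminorm, in complete analogy with the proof of $(\ast)$ just completed. Since $\vecf$ is parametrised by arc-length and $0 < \epsilon < \frac{\mathcal{L}}{2}$, we have $\mathscr{D}(\vecf(s_1),\vecf(s_1+\epsilon)) = \epsilon$, so the bi-Lipschitz estimate furnishes a constant $\lambda \in (0,1)$, independent of $s_1$, with
\[
	\lambda \epsilon
	\leqq
	\| \vecf ( s_1 ) - \vecf ( s_1 + \epsilon ) \|_{ \mathbb{R}^n }
	\leqq
	\epsilon .
\]
Because $\Phi > 0$ and $\int_x^\infty dt / \Phi(t)$ is finite and positive under (A.2), the function $\Lambda$ is everywhere negative; consequently $\Lambda(\|\vecf(s_1)-\vecf(s_1+\epsilon)\|_{\mathbb{R}^n})$ is controlled by $\sup_{y \in (\lambda\epsilon,\epsilon]}|\Lambda(y)|$ (shrinking $\lambda$ slightly if necessary, so that the window is of the form appearing in (A.10), using that $[\lambda\epsilon,\epsilon]\subset(\lambda'\epsilon,\epsilon]$ for any $\lambda'<\lambda$).

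Next I would dispose of the second factor. For $s_2 \in [s_1, s_1+\epsilon]$ one has $|s_1 - s_2| \leqq \epsilon$, hence $\Phi(|s_1-s_2|) \leqq \sup_{x \in (0,\epsilon]}\Phi(x)$, and therefore
\[
	\| \vectau ( s_1 ) - \vectau ( s_2 ) \|_{ \mathbb{R}^n }^2
	\leqq
	\Bigl( \sup_{ x \in ( 0 , \epsilon ] } \Phi (x) \Bigr)
	\frac { \| \vectau ( s_1 ) - \vectau ( s_2 ) \|_{ \mathbb{R}^n }^2 } { \Phi ( | s_1 - s_2 | ) } .
\]
Recalling $\vecf^\prime = \vectau$ and using that the integrand $\Lambda(\cdots)\int\|\cdot\|^2$ has a fixed sign (so the absolute value passes inside), combining the two bounds yields the master inequality
\begin{align*}
	& \left|
	\int_{ \mathbb{R} / \mathcal{L} \mathbb{Z} }
	\Lambda ( \| \vecf ( s_1 ) - \vecf ( s_1 + \epsilon ) \|_{ \mathbb{R}^n } )
	\int_{ s_1 }^{ s_1 + \epsilon }
	\| \vecf^\prime ( s_1 ) - \vecf^\prime ( s_2 ) \|_{ \mathbb{R}^n }^2
	\, d s_2 d s_1
	\right|
	\\
	& \quad
	\leqq
	\Bigl( \sup_{ x \in ( 0 , \epsilon ] } \Phi (x) \Bigr)
	\Bigl( \sup_{ y \in ( \lambda \epsilon , \epsilon ] } | \Lambda (y) | \Bigr)
	\int_{ \mathbb{R} / \mathcal{L} \mathbb{Z} }
	\int_{ s_1 }^{ s_1 + \epsilon }
	\frac { \| \vectau ( s_1 ) - \vectau ( s_2 ) \|_{ \mathbb{R}^n }^2 } { \Phi ( | s_1 - s_2 | ) }
	\, d s_2 d s_1 .
\end{align*}

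Finally I would let $\epsilon \to +0$. By (A.10) the product of the two suprema remains bounded along $\epsilon \to +0$, while the hypothesis $\vecf \in W_\Phi$ guarantees that $\iint \|\Delta\vectau\|_{\mathbb{R}^n}^2 / \Phi(|\Delta s|)$ is absolutely convergent over $(\mathbb{R}/\mathcal{L}\mathbb{Z})^2$. Since $\epsilon < \frac{\mathcal{L}}{2}$ the distance on the strip $\{\, s_1 \leqq s_2 \leqq s_1+\epsilon \,\}$ equals $|s_1-s_2|$, so the last integral is a genuine piece of that convergent double integral taken over a region whose measure tends to $0$; by absolute continuity of the integral (equivalently, dominated convergence against the characteristic function of the strip) it tends to $0$, forcing the left-hand side to $0$, which is exactly $(\dagger)$. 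I expect the only delicate point to be conceptual rather than computational: one must observe that the singular growth of $|\Lambda|$ near the origin is compensated by the smallness of $\Phi$ on $(0,\epsilon]$, and that the arc-length identity $\mathscr{D}=\epsilon$ is what pins $\|\Delta\vecf\|$ into the window $(\lambda\epsilon,\epsilon]$ on which $\sup|\Lambda|$ is measured—this matching is precisely the content of (A.10).
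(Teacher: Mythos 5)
Your argument is correct and follows essentially the same route as the paper: bound $|\Lambda(\|\Delta\vecf\|)|$ by $\sup_{y\in(\lambda\epsilon,\epsilon]}|\Lambda(y)|$ via the bi-Lipschitz window, insert $\Phi(|s_1-s_2|)\leqq\sup_{x\in(0,\epsilon]}\Phi(x)$, invoke (A.10) for the product of suprema, and finish by absolute continuity of the convergent $W_\Phi$ integral over the shrinking strip. Your version with $\Phi(|s_1-s_2|)$ in the denominator is in fact the cleaner reading of the paper's estimate (whose displayed denominator $|s_3-s_1|$ appears to be a slip), so nothing further is needed.
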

\begin{proof}
There exists $ \lambda \in ( 0 , 1 ] $ with $ \lambda \epsilon \leqq \| \vecf ( s_1 + \epsilon , s_1 ) - \vecf ( s_1 ) \|_{ \mathbb{R}^n } \leqq \epsilon $ from bi-Lipschitz estimate. Therefore we have 
\begin{align*}
	&
	\int_{ \mathbb{R} / \mathcal{L} \mathbb{Z} }
	| \Lambda ( \| \vecf ( s_1 + \epsilon ) - \vecf ( s_1 ) \|_{ \mathbb{R}^n } ) |
	\int_{ s_1 }^{ s_1 + \epsilon }
	\| \vectau ( s_1 ) - \vectau ( s_3 ) \|_{ \mathbb{R}^n }^2 d s_3
	d s_1
	\\
	& \quad
	\leqq
	\sup_{ x \in ( 0 , \epsilon ] } \Phi (x)
	\sup_{ y \in ( \lambda \epsilon , \epsilon ] } | \Lambda (y) |
	\int_{ \mathbb{R} / \mathcal{L} \mathbb{Z} }
	\int_{ s_1 }^{ s_1 + \epsilon }
	\frac { \| \vectau ( s_1 ) - \vectau ( s_3 ) \|_{ \mathbb{R}^n }^2 }
	{ | s_3 - s_1 | }
	d s_3
	d s_1
	\\
	& \quad
	\leqq
	C( \lambda )
	\int_{ \mathbb{R} / \mathcal{L} \mathbb{Z} }
	\int_{ s_1 }^{ s_1 + \epsilon }
	\frac { \| \vectau ( s_1 ) - \vectau ( s_3 ) \|_{ \mathbb{R}^n }^2 }
	{ | s_1 - s_3 | }
	d s_3
	d s_1. 
\end{align*}
If $ \vecf \in W_\Phi $,
then
\[
	\lim_{ \epsilon \to + 0 }
	\int_{ \mathbb{R} / \mathcal{L} \mathbb{Z} }
	\int_{ s_1 }^{ s_1 + \epsilon }
	\frac { \| \vectau ( s_1 ) - \vectau ( s_3 ) \|_{ \mathbb{R}^n }^2 }
	{ | s_1 - s_3 | }
	d s_3
	d s_1
	= 0
\]
holds from absolute continuity of integration. 
\qed
\end{proof}
\par
It is easily shown that $ \Phi (x) = x^\alpha $ {\rm ($ \alpha \in [ 2 , 3 )$)} satisfies 
{\rm (A.1)},
{\rm (A.2)},
{\rm (A.10)} and {\rm (A.11)} 
and therefore we have next corollary. 
\begin{cor}
We suppose {\rm (A.1)},
{\rm (A.2)},
{\rm (A.10)} and {\rm (A.11)},
and then {\rm (A.4)} holds. 
In particular, {\rm (A.4)} holds when 
$ \Phi (x) = x^\alpha $ {\rm ($ \alpha \in [ 2 , 3 )$)}. 
\end{cor}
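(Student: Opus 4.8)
The plan is to obtain (A.4) by conjoining the two immediately preceding propositions, and then to verify the hypotheses in the concrete power case. Recall that condition (A.4) is exactly the conjunction of the two limits $(\ast)$ and $(\dagger)$. One of the two preceding propositions establishes $(\ast)$ and the other establishes $(\dagger)$, in each case under (A.1), (A.2) together with the pertinent limsup hypothesis, for any $\vecf\in W_\Phi$ that is bi-Lipschitz with unit-speed parametrization; these are precisely the admissibility hypotheses appearing in (A.4). The two limsup hypotheses are what are named (A.10) and (A.11) in the present statement. Thus, once (A.1), (A.2), (A.10) and (A.11) are assumed, both $(\ast)$ and $(\dagger)$ hold simultaneously, which is (A.4). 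The general assertion is therefore a bookkeeping combination of results already in hand, and presents no real obstacle.

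The substance lies in the explicit check for $\Phi(x)=x^\alpha$ with $\alpha\in[2,3)$. Monotonicity (A.1) is clear, and (A.2) holds because $\int_x^\infty t^{-\alpha}\,dt=\frac{x^{1-\alpha}}{\alpha-1}<\infty$ for $\alpha>1$. I would first record the explicit form of $\Lambda$, namely
\[
	\Lambda(x)=-\frac1x\int_x^\infty\frac{dt}{t^\alpha}=-\frac{x^{-\alpha}}{\alpha-1}=-\frac{1}{(\alpha-1)x^\alpha},
\]
so that $|\Lambda(x)|=\frac{1}{(\alpha-1)x^\alpha}$, and, since $\Lambda(\sqrt x)=-\frac{1}{\alpha-1}x^{-\alpha/2}$,
\[
	\left|\frac{d}{dx}\Lambda(\sqrt x)\right|=\frac{\alpha}{2(\alpha-1)}\,x^{-\frac\alpha2-1}.
\]

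It then remains to verify (A.10) and (A.11), and the key observation is that both limsup expressions are scale-invariant in $\epsilon$, so that the powers of $\epsilon$ cancel exactly and each limsup reduces to an $\epsilon$-independent constant. For (A.10), monotonicity of $\Phi$ gives $\sup_{y\in(0,\epsilon]}\Phi(y)=\epsilon^\alpha$, while $x\mapsto x^{-\alpha/2-1}$ is decreasing, so the supremum of $|\frac{d}{dx}\Lambda(\sqrt x)|$ over $[\lambda^2\epsilon^2,\epsilon^2]$ is attained at the left endpoint and equals $\frac{\alpha}{2(\alpha-1)}(\lambda^2\epsilon^2)^{-\alpha/2-1}$; multiplying by $\epsilon^2$ leaves the constant $\frac{\alpha}{2(\alpha-1)}\lambda^{-\alpha-2}$. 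For (A.11), again $\sup_{x\in(0,\epsilon]}\Phi(x)=\epsilon^\alpha$, and since $|\Lambda|$ is decreasing, $\sup_{y\in(\lambda\epsilon,\epsilon]}|\Lambda(y)|=\frac{1}{(\alpha-1)(\lambda\epsilon)^\alpha}$, whose product with $\epsilon^\alpha$ is $\frac{1}{(\alpha-1)\lambda^\alpha}$. Both values are finite for every $\lambda\in(0,1)$ and independent of $\epsilon$, so the two limsups are finite; hence (A.10) and (A.11) hold and, by the general assertion, (A.4) follows. The only point demanding attention is the correct identification of the extremizing endpoints of each supremum, which is dictated by the monotonicity of $\Phi$ and of $x\mapsto|\Lambda(x)|$ and $x\mapsto x^{-\alpha/2-1}$.
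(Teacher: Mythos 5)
Your proposal is correct and follows the paper's own route: the corollary is simply the conjunction of the two preceding propositions (whose limsup hypotheses are the paper's (A.9) and (A.10), mislabeled as (A.10)/(A.11) in the corollary's statement, a slip you correctly resolved), applied under the admissibility hypotheses of (A.4), and the paper asserts the power case without detail as ``easily shown.'' Your explicit computation of $\Lambda(x)=-\tfrac{1}{(\alpha-1)x^\alpha}$ and of $\bigl|\tfrac{d}{dx}\Lambda(\sqrt{x})\bigr|=\tfrac{\alpha}{2(\alpha-1)}x^{-\frac{\alpha}{2}-1}$, together with the observation that the powers of $\epsilon$ cancel exactly in both limsups (yielding the $\epsilon$-independent constants $\tfrac{\alpha}{2(\alpha-1)}\lambda^{-\alpha-2}$ and $\tfrac{1}{(\alpha-1)\lambda^{\alpha}}$), correctly supplies the verification the paper omits.
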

\par\noindent
{\bf Acknowledgment}.
The authors express their appreciation to Professor Jun O'Hara for his information of article \cite{Brylinski}.

\end{document}